\definecolor{refkey}{gray}{.75}
\definecolor{labelkey}{gray}{.75}
\def\RR{\mathbb   R}
\def\EE{\mathbb   E}
\def\NN{\mathbb   N}
\def\PP{\mathbb   P}
\def\1{{\bf 1}}
\newcommand{\V}{\mathrm{Var}}
\newcommand{\veps}{\upsilon_\varepsilon}
\newcommand{\Tr}{\mathrm{Tr}}
\newtheorem{theorem}{Theorem}[section]
\newtheorem{proposition}{Proposition}[section]
\newtheorem{lemma}{Lemma}[section]
\newtheorem{corollary}{Corollary}[section]
\newtheorem{definition}{Definition}[section]
\newtheorem{remark}{Remark}
\newcommand{\argmin}{\mathop{\mathrm{arg\,min}}}
\newcommand\independent{\protect\mathpalette{\protect\independenT}{\perp}}
\def\independenT#1#2{\mathrel{\rlap{$#1#2$}\mkern2mu{#1#2}}}
\begin{document}
\title{\textsc{Importance Sampling and Statistical Romberg Method for L\'evy Processes}}
\author[1]{\textsc{Mohamed Ben Alaya}\thanks{Supported by Laboratory of Excellence MME-DII http://labex-mme-dii.u-cergy.fr/}}
\author[1]{\textsc{Kaouther Hajji}}
\author[1]{\textsc{Ahmed Kebaier}\thanks{This research benefited from the support of the chair "Risques Financiers", Fondation du Risque.}\thanks{Supported by Laboratory of Excellence MME-DII http://labex-mme-dii.u-cergy.fr/}}
\affil[1]{{\normalsize{ Universit\'{e} Paris 13,
Sorbonne Paris Cité, LAGA, CNRS (UMR 7539)}}
\normalsize{mba@math.univ-paris13.fr\hspace{0.2 cm}
hajji@math.univ-paris13.fr\hspace{0.2cm}kebaier@math.univ-paris13.fr}}
\maketitle

\begin{abstract}
An important family of stochastic processes arising in many areas of applied probability is the class of Lévy processes. Generally, such processes are not simulatable especially for those with infinite activity.
In practice, it is common to approximate them by truncating the jumps at some cut-off size $\varepsilon$ 
($\varepsilon\searrow 0$). This procedure leads us to consider a simulatable compound Poisson process. 
This paper first introduces, for this setting, the statistical Romberg method to improve the complexity of the classical Monte Carlo one. Roughly speaking, we use many
sample paths with a coarse cut-off $\varepsilon^{\beta},$ $\beta\in(0,1)$, and few additional sample paths
with a fine cut-off $\varepsilon$. Central limit theorems of Lindeberg-Feller type  for both
Monte Carlo and statistical Romberg method for the inferred errors depending on the parameter $\varepsilon$
are proved. This leads to an accurate description of the optimal choice of parameters with explicit limit variances.  Afterwards, the authors propose a stochastic approximation method of finding the optimal measure change by Esscher transform for Lévy processes with Monte Carlo and statistical Romberg importance sampling variance reduction. Furthermore, we develop new adaptive Monte Carlo and  statistical Romberg algorithms and prove the associated central limit theorems. Finally, numerical simulations are  processed to illustrate the efficiency of the adaptive statistical Romberg method that reduces at the same time the variance and the computational effort 
associated to the effective computation of option prices when the
underlying asset process follows an exponential pure jump  CGMY model.

\smallskip
\noindent{\em\bf {\small MSC 2010}:} {\small 60E07, 60G51,  60F05, 62L20, 65C05, 60H35.}

\smallskip
\noindent{\em\bf {\small Keywords:}} {\small Lévy processes, Esscher transform, Monte Carlo, Statistical Romberg, Variance reduction, Central limit theorems,    CGMY model}.
\end{abstract}

\section{Introduction}
Lévy processes arise in many areas of applied probability and specially in mathematical finance, where they  become very fashionable since they can describe the observed reality of financial markets in a more accurate way than models based on Brownian motion (see e.g. Cont and Tankov \cite{ContTankov} and 
Shoutens \cite{Schoutens}). In particular in the pricing of financial securities we are interested in the computation of the real quantity  $\EE F(L_T)$, $T>0$, where $(L_t)_{0\leq t \leq T }$ is a  $\RR^d$-valued  pure jump L\'evy process, $d\geq 1$ and $F:\RR^d\mapsto \RR$ is a given function. In the literature, the computation of this quantity involves three types of methods: Fourier transform methods, numerical methods for partial integral differential equations and Monte Carlo methods. It is well known that the two first methods can not cope with high dimensional problems.
This gives a competitive edge for Monte Carlo methods in this setting. Therefore, the focus of this work is
to study improved Monte Carlo methods using the statistical Romberg  algorithm and the importance sampling
 technique. The statistical Romberg method is known for reducing the time complexity and the  importance sampling technique is aimed at reducing the variance.

The Monte Carlo method consists of two steps. In the first step, we approximate the Lévy process $(L_t)_{0\leq t\leq T}$ by a simulatable Lévy process  $(L^{\varepsilon}_t)_{0\leq t\leq T}$ with $\varepsilon>0$. If $\nu$ denotes the Lévy measure of the Lévy process under consideration, then it is common to take $(L^{\varepsilon}_t)_{0\leq t\leq T}$ with Lévy measure $\nu_{|\{|x|\geq \varepsilon\}} $ and $\varepsilon \searrow 0$. This approximation 
is nothing but a compound Poisson process. 
In the second step, we approximate $\displaystyle\mathbb E\,F\left( L^{\varepsilon}_T\right)$ by
$
\frac{1}{N}\sum_{i=1}^{N}F( L^{\varepsilon}_{T,i}),
$
where $(L^{\varepsilon}_{T,i})_{1\leq i\leq N}$ is a sample of $N$ independent copies of $L^{\varepsilon}_T$.
Therefore, this Monte Carlo method (MC) is affected respectively 
 by an approximation error and a  statistical one
$$
\mathcal E_1(\varepsilon):=\mathbb E\left( F(L^{\varepsilon}_T)-F(L_T)\right)
\mbox{ and  } \mathcal E_2(N):=\frac{1}{N}\sum_{i=1}^{N}F( L^{\varepsilon}_{T,i})-\mathbb EF(L^{\varepsilon}_T).
$$
On one hand, for a Lipschitz function $F$ we have $\mathcal E_1(\varepsilon)=O(\sigma(\varepsilon)), $ where
$\sigma^2(\varepsilon)=\EE|L_1-L^{\varepsilon}_1|^2$ (see relation \eqref{weak_error_upperbound} for more details). On the other hand, the statistical error is controlled by the central limit theorem  
with order $1/\sqrt{N}$. Hence, optimizing the choice  of the sample size $N$ in the Monte Carlo method leads to  $N=O(\sigma^{-2}(\varepsilon))$.  Moreover, if we choose $N=\sigma^{-2}(\varepsilon)$ we prove a central 
limit theorem of Lindeberg-Feller type (see Theorem \ref{clt:MC}).
Therefore, if we denote by  $\mathcal K(\varepsilon)$ the cost of a single 
simulation of $L^{\varepsilon}_T$, then the total time complexity necessary to achieve the  precision $\sigma(\varepsilon)$ is given by  $C_{MC}=O(\mathcal K(\varepsilon)\sigma^{-2}(\varepsilon))$ (see subsection \ref{complexity}). 

In order to improve the performance of this method we use the idea of the statistical Romberg method introduced by Kebaier \cite{Kebaier} in the setting of Euler Monte Carlo methods for stochastic differential equations driven by a standard Brownian Motion which is also related to the well known Romberg's method introduced by Talay and Tubaro in \cite{TalayTubaro}.
Inspired by this technique, we introduce a novel method for the computation of our initial target. 
The main idea of this new method is to consider two cut-off sizes  $\varepsilon$ and $\varepsilon^{\beta}$, $\beta\in(0,1)$
and then approximate $\EE F(L_T)$ by 
\begin{equation*}
 \frac{1}{N_1}\sum_{i=1}^{N_1}
F(\hat L^{\varepsilon^{\beta}}_{T,i}) + \frac{1}{N_2}
\sum_{i=1}^{N_2}F( L^{\varepsilon}_{T,i})-F( L^{\varepsilon^{\beta}}_{T,i}).
\end{equation*}
The samples $(L^{\varepsilon}_{T,i})_{1\leq i\leq N_2}$ 
and $(L^{\varepsilon^{\beta}}_{T,i})_{1\leq i\leq N_2}$ have to be independent of  
 $ (\hat L^{\varepsilon}_{T,i})_{1\leq i\leq N_1}$. Moreover,  for  $1\leq i\leq N_2$, the process
$(L^{\varepsilon}_{t,i})_{0\leq t\leq T}$ is nothing else the sum of $(L^{\varepsilon^{\beta}}_{t,i})_{0\leq t\leq T}$ and an independent Lévy process $(L^{\varepsilon,\varepsilon^{\beta}}_{t,i})_{0\leq t\leq T}$   with Lévy measure $\nu_{|\{\varepsilon \leq |x|\leq\varepsilon^{\beta}\}}$ which is also simulatable as a compound Poisson process. This new method  will be referred as the statistical Romberg method (SR). 
Additionally, like for the MC method, we prove a central limit theorem of Lindeberg-Feller type for the SR algorithm with  $N_1=\sigma^{-2}(\varepsilon)$ and $N_2=\sigma^{-2}(\varepsilon)\sigma^{2}(\varepsilon^{\beta})$ (see Theorem \ref{CLT Statistical Romberg}). Then, according to subsection \ref{complexity}, the total time complexity necessary to achieve the  precision $\sigma(\varepsilon)$ is given by $C_{SR}=\left(\mathcal K(\varepsilon^{\beta}) +\mathcal K(\varepsilon)\sigma^{2}(\varepsilon^{\beta})\right)\sigma^{-2}(\varepsilon)$. It turns out that the complexity ratio  $C_{SR}/C_{MC}$ vanishes as $\varepsilon$ goes to zero.

Since the efficiency of the Monte Carlo simulation considerably depends on the smallness of the variance in the estimation, many variance reduction techniques were developed in the recent years. Among these methods appears the technique of importance sampling very popular for its efficiency. For the Gaussian setting, the importance sampling technique was studied by Arouna  \cite{Arouna}, Galasserman, Heidelberger and Shahabuddin \cite{GlassermanheidelbergerShahabuddin} for MC method and by Ben Alaya, Hajji and Kebaier \cite{BenalayaHajjiKebaier} for SR method. Concerning Lévy process without a Brownian component, Kawai \cite{Kawai} has already applied this technique for MC algorithm using the Esscher transform which is nothing but the well known exponential tilting of laws. From a practical point of view, his approach is exploitable only when the Lévy process $(L_t)_{0\leq t\leq T}$ is simulatable without any approximation. Note also that in his study there is no results on the rate of convergence of the obtained algorithm.  

The main aim of the present work is to apply the idea of \cite{Kawai} to the approximation Lévy process
$(L_t^{\varepsilon})_{0\leq t\leq T}$ for both MC and SR algorithms and to study the inferred error in terms of the cut-off $\varepsilon$; a question which has not been addressed in previous research.  
Roughly speaking, thanks to the  Esscher transform we produce a parametric transformation such that for all $\theta\in K$  we have $\EE F(L^{\varepsilon}_T)=\EE G(\theta,L^{\varepsilon}_T),$  
where $K$ is a suitable subset of $\mathbb R^d$ and $(\theta,x)\mapsto G(\theta,x)$ is a real function taking values in $\RR^d\times\RR^d$.
Concerning the MC method it looks natural to implement the method with 
$\theta^*_{1,\varepsilon}=\argmin_{\theta\in K}\EE G^2(\theta,L^{\varepsilon}_T).$
However, for the SR method the inferred error is controlled by 
${ \rm Var}(G(\theta,L_T^{\varepsilon}))+T\EE(|\nabla_x G(\theta,L_T^{\varepsilon})|^2)$. 
Then, in this case, it is natural to implement the first (resp. the second) empirical mean appearing in the SR estimator with $ \theta^*_{1,\varepsilon}$ (resp. 
$\theta^*_{2,\varepsilon}=\argmin_{\theta\in K}\EE(|\nabla_x G(\theta,L_T^{\varepsilon})|^2)$. 
But what about the effective computation of $(\theta^*_{i,\varepsilon})_{i\in\{1,2\}}$ ? 
To answer this question, we use a constrained version of the well-known stochastic approximation Robbins-Monro. All these ideas led us to introduce two new  methods based on  adaptive approximations. 
The first method concerns a combination of an adaptive  importance sampling technique and the 
MC method that will be called Importance Sampling Monte Carlo method (ISMC) (see relation \eqref{eq:ISMC}). 
The second one concerns an original combination of an adaptive  importance sampling technique with the SR algorithm that will be referred as Importance Sampling Statistical Romberg method (ISSR) (see relation \eqref{eq:ISSR}). The main point in favor of the ISSR method is that it inherits the variance reduction 
from the Importance sampling procedure and the complexity reduction from the SR method. A complexity analysis is also provided.

The rest of the paper is organized as follows. Section \ref{GF} introduces the general framework and recalls some useful results. In section \ref{SRLevy}, the central limit theorems of Lindeberg-Feller type are proved for both MC and SR methods (see Theorems \ref{clt:MC} and \ref{CLT Statistical Romberg}).  Similar results are derived for the setting of an exponential Lévy model (see Corollaries \ref{Cor-CLT-MC} and \ref{Cor-CLT-RS}).  A complexity analysis is included. In section \ref{sec:IS}, we recall the Esscher transform and the principle of importance sampling technique for the SR method. For $i\in\{1,2\}$ and $\varepsilon \searrow 0$, we prove the convergence of the optimal choice $\theta^*_{i,\varepsilon}$  to the optimal choice associated to the limit model (see Theorem \ref {th:convergence}). In section \ref{adaptativeprocedure}, we first study, for $i\in\{1,2\}$, the almost sure convergence of the stochastic recursive constrained Robbins-Monro algorithm given by the double indexed sequence $\theta_{i,\varepsilon,n}$ as $\varepsilon\searrow 0$ and $n\nearrow \infty$  (see Theorems \ref{th:cv:CRM} and  \ref{th:cv:eps} and Corollary \ref{cor:cv:CRM}). The rest of this section is devoted to 
prove the central limit theorems of Lindeberg-Feller type for both adaptive ISMC and ISSR methods
(see Theorems \ref{CLT adaptative Monte Carlo} and \ref{CLT:ISSR}). Section \ref{num:test} illustrates 
the superiority of the ISSR method over all the other ones via numerical examples for both one and two-dimensional Carr, Geman, Madan and Yor (CGMY) process \cite{CGMY}. Finally, the last Section is devoted to discuss some future openings.

\section{General Framework}\label{GF}
We denote by $(\Omega,\mathcal F,\PP)$ our underlying probability space. 
A stochastic process $(L_t)_{t\geq 0}$ on  $(\Omega,\mathcal F,\PP)$ with values in $\RR^{d}$ such that $L_0=0$ is a L\'evy process if it has independent and stationary increments. We endow the probability space 
$(\Omega,\mathcal F,\PP)$ with the canonical filtration $(\mathcal F_t)_{0\leq t\leq T}$ where $\mathcal F_t=\sigma(L_s,s\leq t)$. 
The characteristic function of a L\'evy process $L$ with generating triplet $(\gamma, A, \nu)$ is given by the well known L\'evy Kintchine representation
\begin{equation*}
 \EE e^{iu.L_t}  = \exp \left\{ t \left( i \gamma.u - \frac{1}{2} u.Au + \int_{\RR^{d}} (e^{iu.x} -1-iu.x \mathbf{1}_{|x|\leq 1}) \nu(dx) \right) \right\},  \quad u \in \RR^{d}, 
\end{equation*}
where $\gamma \in \RR^{d}$, $A$ is a symmetric non-negative-definite $d \times d $ matrix and $\nu$ is a L\'evy measure on $\RR^{d}\setminus \{0\}$ verifying $\int_{\RR^{d}\setminus \{0\}} (|x|^{2} \wedge 1) \nu(dx) < \infty$.   
(Given vectors $x$ and $y\in \RR^d$, $x.y$ denotes the inner product of $x$ and $y$ associated to the Euclidean norm $|\cdot|$). In this paper, we are interested in studying pure-jump L\'evy processes, that is, we set $A \equiv 0$ throughout all the paper.Then, $(L_{t})_{t \geq 0}$ is a L\'evy process with generating triplet $(\gamma, 0, \nu)$. 
The simulation of a L\'evy process with infinite L\'evy measure is not straightforward. 
From the L\'evy-It\^o decomposition (see e.g. Theorem 19.2 in Sato \cite{Sato}), we know that $L$ can be represented as a sum of a compound 
Poisson process  and an almost sure limit of compensated compound Poisson process $L_t=\lim_{\varepsilon \rightarrow 0}L_t^{\varepsilon}$ $a.s.$
where  for $0<\varepsilon<1$
\begin{equation}
\label{Levy_app}
 L_{t}^{\varepsilon}=\gamma t+  \sum_{0 < s \leq t}  \Delta L_s \mathbf{1}_{|\Delta L_s| > 1}+
 (\sum_{0 < s \leq t}\Delta L_s \mathbf{1}_{\varepsilon\leq |\Delta L_s|\leq 1 }-t\int_{\varepsilon \leq |x| \leq 1}x\nu(dx)),  \quad t\geq 0.
\end{equation}
Note that without the compensation $t\int_{\varepsilon \leq |x| \leq 1}x\nu(dx)$, the sum of jumps $\sum_{0 < s \leq t}\Delta L_s \mathbf{1}_{\varepsilon\leq |\Delta L_s|\leq 1 }$ may not converge as $\varepsilon$ goes to zero. We denote  the approximation error by
\begin{equation}
 R^{\varepsilon}= L - L^{\varepsilon}.
 \label{error}
\end{equation}
The process $R^{\varepsilon}$ is also a L\'evy process independent of $L^{\varepsilon}$ with characteristic function
\begin{equation*}
 \EE e^{iu.R_{t}^{\varepsilon}}= \exp \left\{ t \int_{|x|\leq \varepsilon} (e^{iu.x} -1 - iu.x) \nu(dx) \right\}. 
\end{equation*}
Consequently, $\EE [R_{t}^{\varepsilon}]=0$ and the variance-covariance matrix $\EE [R_{t}^{\varepsilon}{(R_{t}^\varepsilon})']= t \Sigma_\varepsilon$ where 
$$\Sigma_\varepsilon= \int_{|x|\leq \varepsilon} xx' \nu(dx) .$$
($A'$ denotes the transpose of a matrix $A$).
The asymptotic behavior of the distribution of $R^{\varepsilon}$ is firstly studied by Asmussen and Rosi\'nski \cite{AsmRsi} in the one dimensional case and later extended to
the multidimensional case by Cohen and Rosi\'nski \cite{CohRos}. Throughout this paper  $W=(W_t)_{t\geq 0}$ is a standard Brownian motion in $\RR^d$ independent of $(L_t)_{t\geq 0}$.
\begin{theorem}\label{Cohen_Rosinski}Under the above notation, suppose that $\Sigma_\varepsilon$ is invertible for every $\varepsilon \in (0,1]$. Then as $\varepsilon \rightarrow 0$,
$$
\Sigma_\varepsilon^{-1/2} R^{\varepsilon}{\Rightarrow} W,
$$ 
 if and only if for each $k>0$
\begin{equation}
\label{condition_convergence1}
\lim\limits_{\varepsilon \to 0} \int_{\langle \Sigma_\varepsilon^{-1}x,x\rangle > k}  \langle \Sigma_\varepsilon^{-1}x,x\rangle {\mathbf 1}_{|x|\leq \varepsilon} \nu(dx)= 0.
\end{equation}
Here $``{\Rightarrow}``$ stands for the convergence in distribution.
\end{theorem}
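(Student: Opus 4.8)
The plan is to reduce the functional statement to convergence of the characteristic exponent at a single time and then to recognise \eqref{condition_convergence1} as a Lindeberg-type condition. Since both $\Sigma_\varepsilon^{-1/2}R^\varepsilon$ and $W$ are L\'evy processes, weak convergence on path space is equivalent to convergence of the generating triplets, and for a Brownian limit this reduces to the convergence $\Sigma_\varepsilon^{-1/2}R^\varepsilon_1\Rightarrow W_1=N(0,I_d)$ of the time-$1$ marginals (all other times follow because the time-$t$ exponent is $t$ times the time-$1$ one; the independent-increment structure then upgrades the one-dimensional convergence to finite-dimensional and, the limit being continuous, to convergence on path space, see e.g. Sato \cite{Sato}). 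I would therefore work with the time-$1$ exponent $\phi_\varepsilon(u)=\int_{|x|\le\varepsilon}(e^{i\langle\Sigma_\varepsilon^{-1/2}u,x\rangle}-1-i\langle\Sigma_\varepsilon^{-1/2}u,x\rangle)\,\nu(dx)$ and push the measure forward under $y=\Sigma_\varepsilon^{-1/2}x$. Writing $\tilde{\nu}_\varepsilon$ for the image of $\nu_{|\{|x|\le\varepsilon\}}$ under this map, one gets $\phi_\varepsilon(u)=\int(e^{i\langle u,y\rangle}-1-i\langle u,y\rangle)\,\tilde{\nu}_\varepsilon(dy)$, while the same change of variables turns \eqref{condition_convergence1} into $\int_{|y|>\delta}|y|^2\,\tilde{\nu}_\varepsilon(dy)\to0$ for every $\delta>0$.

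The computational heart is the exact identity $\int yy'\,\tilde{\nu}_\varepsilon(dy)=\Sigma_\varepsilon^{-1/2}\Sigma_\varepsilon\Sigma_\varepsilon^{-1/2}=I_d$, valid for every $\varepsilon$; in particular $\int|y|^2\tilde{\nu}_\varepsilon(dy)=d$. Subtracting the second-order term this produces, I would write
\[
\phi_\varepsilon(u)+\tfrac12|u|^2=\int g(\langle u,y\rangle)\,\tilde{\nu}_\varepsilon(dy),\qquad g(s):=e^{is}-1-is+\tfrac12 s^2,
\]
so that the whole theorem becomes: the right-hand side tends to $0$ for every $u$ if and only if the Lindeberg condition holds. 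The two elementary bounds $|g(s)|\le |s|^3/6$ and $|g(s)|\le s^2$ drive both implications.

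For the \emph{if} part I would split the integral at $|y|\le\delta$ and $|y|>\delta$. On the first region $|g(\langle u,y\rangle)|\le |u|^3|y|^3/6\le(|u|^3\delta/6)|y|^2$, so that piece is at most $|u|^3\delta\,d/6$ uniformly in $\varepsilon$; on the second region $|g(\langle u,y\rangle)|\le |u|^2|y|^2$, so that piece is bounded by $|u|^2\int_{|y|>\delta}|y|^2\tilde{\nu}_\varepsilon(dy)\to0$. Taking $\limsup_\varepsilon$ and then letting $\delta\to0$ gives $\phi_\varepsilon(u)\to-\tfrac12|u|^2$, i.e. convergence to the Brownian exponent.

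The \emph{only if} part is the main obstacle, as is usual for Lindeberg--Feller statements. Taking real parts, the nonnegative integrand $h(s):=\tfrac12 s^2-(1-\cos s)\ge0$ satisfies $\int h(\langle u,y\rangle)\tilde{\nu}_\varepsilon(dy)\to0$ for every $u$, with the uniform domination $\int h(\langle u,y\rangle)\tilde{\nu}_\varepsilon(dy)\le \tfrac12|u|^2 d$. I would then average over $u$ against a standard Gaussian density $p$: since $\langle u,y\rangle\sim N(0,|y|^2)$, Fubini together with $\EE\cos\langle u,y\rangle=e^{-|y|^2/2}$ yields $\int[\int h(\langle u,y\rangle)p(u)\,du]\,\tilde{\nu}_\varepsilon(dy)=\int H(|y|^2)\,\tilde{\nu}_\varepsilon(dy)$ with $H(r):=\tfrac12 r-1+e^{-r/2}\ge0$, and dominated convergence forces this to tend to $0$. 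Because $H(r)/r\to\tfrac12$ and $H>0$ on $(0,\infty)$, the constant $c_\delta:=\inf_{r\ge\delta}H(r)/r$ is strictly positive, whence $\int_{|y|^2\ge\delta}|y|^2\tilde{\nu}_\varepsilon(dy)\le c_\delta^{-1}\int H(|y|^2)\tilde{\nu}_\varepsilon(dy)\to0$, which is exactly \eqref{condition_convergence1}. The delicate points to verify are the domination in the Gaussian averaging and the strict positivity of $c_\delta$; the passage from marginal to functional convergence I would simply quote from the general theory of convergence of processes with independent increments.
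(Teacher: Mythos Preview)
The paper does not prove this theorem at all: it is quoted from Cohen and Rosi\'nski \cite{CohRos} (the one-dimensional version being due to Asmussen and Rosi\'nski \cite{AsmRsi}) and used as a black box in the sequel. So there is no ``paper's own proof'' to compare against.

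That said, your argument is correct and self-contained. The reduction to the time-$1$ marginal via convergence of L\'evy triplets is standard; the pushforward $y=\Sigma_\varepsilon^{-1/2}x$ and the identity $\int yy'\,\tilde\nu_\varepsilon(dy)=I_d$ are exactly the right normalisation, turning the question into a Lindeberg--Feller statement for infinitely divisible laws with fixed second moment. The ``if'' direction via the splitting $|y|\le\delta$ / $|y|>\delta$ and the bounds $|g(s)|\le|s|^3/6$, $|g(s)|\le s^2$ is the textbook proof. For the ``only if'' direction, your Gaussian-averaging device is a clean way to extract the tail condition: Tonelli applies because $h\ge0$, the domination $\int h(\langle u,y\rangle)\tilde\nu_\varepsilon(dy)\le\tfrac12|u|^2d$ is $p(u)\,du$-integrable, and $H(r)=\tfrac12 r-1+e^{-r/2}$ is strictly positive and continuous on $(0,\infty)$ with $H(r)/r\to\tfrac12$, so $c_\delta>0$ as you claim. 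This is somewhat more explicit than the route taken in \cite{CohRos}, which appeals to the general characterisation of convergence of infinitely divisible laws (essentially the equivalence between Gaussian limits and vanishing of the truncated L\'evy-measure tails in the normalised coordinates); your argument reproves that equivalence by hand in this specific setting. The only point you leave as a citation---upgrading marginal convergence to functional convergence in the Skorokhod sense---is indeed standard for sequences of L\'evy processes with a continuous limit and can be safely quoted.
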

If $\nu$ is given in polar coordinates by $\nu(dr,du)=\mu(dr|u)\lambda(du), \quad r>0, u\in S^{d-1},$
where $\{\mu(\cdot|u): u\in S^{d-1}\}$ is a measurable family of L\'evy measures on $(0,\infty)$ and $\lambda$ is a 
finite measure on the unit sphere $S^{d-1}$, then $$\Sigma_\varepsilon= \int_{S^{d-1}} \int_{0}^{\varepsilon} r^2 uu'\mu(dr|u)\lambda(du).$$
If we define 
$
 \sigma^2(\varepsilon,u):=\int_{0}^{\varepsilon} r^2 \mu(dr|u)
$
and  $\sigma^2(\varepsilon):=\int_{S^{d-1}}\sigma^2(\varepsilon,u)\lambda(du)$, then  
\begin{equation}
\label{eq-rate-Sigmaeps}
 \EE  |L_t - L_t^{\varepsilon}|^2=t\Tr(\Sigma_{\varepsilon})=t\sigma^2(\varepsilon). 
\end{equation}

\begin{remark} In the one dimensional case Assmussen and Rosi\'nski \cite{AsmRsi}  have obtained the convergence of $\sigma^{-1}(\varepsilon)R^{\varepsilon}$ to a standard Brownian motion
if and only if for each $k>0$, $\sigma(k\sigma(\varepsilon) \wedge \varepsilon  )\sim \sigma(\varepsilon)$ which is satisfied as soon as 
$\lim\limits_{\varepsilon \to 0} \frac{\sigma(\varepsilon)}{\varepsilon}=\infty$ (see Theorem 2.1 and Proposition 2.1 in \cite{AsmRsi}). 
An extension to this sufficient condition in the multidimensional case is given by Theorem 2.5 in Cohen and Rosi\'nski \cite{CohRos}. 
Suppose that  the support of the  measure  $\lambda$ is not   contained in any proper linear subspace of $\RR^d$,  they proved that if 
  \begin{equation}
  \lim\limits_{\varepsilon \to 0} \frac{\sigma(\varepsilon,u)}{\varepsilon}=\infty, \lambda-a.e.
\label{condition_convergence2}
  \end{equation} 
  then $\Sigma_{\varepsilon}$ is invertible and condition (\ref{condition_convergence1}) of Theorem \ref{Cohen_Rosinski} holds. 
\end{remark}
On the other hand, according to Proposition 2.1 of Dia \cite{Dia13}, we have a $L^q$-upper bound of the error approximation in the one dimensional case for any real $q >0$. This result on the strong error approximation remains valid for the  multidimensional case. More precisely,  if we consider the $d$-dimensional error L\'evy process  $R^{\varepsilon}$ given by relation (\ref{error}), then we can easily deduce that 
\begin{equation}
\label{bounds_error_dia}
\EE|R_{t}^{\varepsilon}|^{q} \leq K_{q,T} \sigma_{0}(\varepsilon)^{q},\quad \mbox{ where } K_{q,T}>0 \mbox{ and } \sigma_{0}(\varepsilon)=\sigma(\varepsilon) \vee \varepsilon. \tag{SE}
\end{equation}
 Concerning the weak error, if $F$ denotes a real valued Lipschitz continuous function with Lipschitz constant  $C>0$,  then  it is easy to see that
\begin{equation}
\label{weak_error_upperbound}
|\EE F(L_T)- \EE F(L_T^{\varepsilon})| \leq C \sqrt{T}\sigma(\varepsilon)
\end{equation}
Moreover, under some regularity conditions on   function $F$ we can obtain an expansion of the weak error as in Proposition 2.2 and  Remark 2.3  of \cite{Dia13}.  So, it is worth to introduce the following assumption: there exist $C_F\in\RR$ and $\upsilon_\varepsilon \searrow 0$   as $\varepsilon \searrow 0$ such that  
\begin{equation}
\label{weak_error}
\upsilon_\varepsilon^{-1} \left(\EE F(L_{T})-\EE F(L_{T}^{\varepsilon}) \right) \rightarrow C_{F}\quad \mbox{ as } \varepsilon\searrow 0. \tag{$\mbox{WE}_{\upsilon_\varepsilon}$}
\end{equation}
We recall, in what follows, an important moment property of L\'evy processes. For this, we introduce before the below definition. 
\begin{definition}
A function $f: \RR^d \mapsto [0, \infty)$ is said to be  submultiplicative if there exists a positive constant $c$ such that $f(x+y) \leq c f(x) f(y)$ for $x, y \in \RR^d$. The product of two submultiplicative functions is also  submultiplicative.
\end{definition}
\begin{theorem}[Sato \cite{Sato}, Theorem 25.3]
\label{TH-Sato}
Let $f$ be a submultiplicative, locally bounded, measurable function on $\RR^d$, and let $(L_{t})_{t \geq 0}$ be a L\'evy process in $\RR^d$ with L\'evy measure $\nu$.
Then, $\EE f(L_t)$ is finite for every $t>0$ if and only if $\int_{|z| \geq 1} f(z) \nu(dz) < +\infty$. 
\end{theorem}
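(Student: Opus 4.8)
The plan is to decompose $L_t$ into its large-jump part and the remainder, exploit submultiplicativity together with independence, and treat the two implications separately. I would write $L_t = X_t + Y_t$, where $Y_t = \sum_{0<s\le t}\Delta L_s \mathbf 1_{|\Delta L_s|>1}$ is the compound Poisson process collecting the jumps larger than $1$ and $X_t = L_t - Y_t$ gathers the drift, the Gaussian part (if any) and the compensated small jumps. By the L\'evy--It\^o decomposition $X$ and $Y$ are independent L\'evy processes, $X_t$ has jumps bounded by $1$, and $Y_t = \sum_{k=1}^{N_t} J_k$ with $N_t$ Poisson of parameter $\lambda t$, $\lambda = \nu(|x|>1)$, and the $J_k$ i.i.d. with law $\nu_{|\{|x|>1\}}/\lambda$, independent of $N_t$. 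Two preliminary observations are needed. First, if $f$ is not identically zero then submultiplicativity forces $f>0$ everywhere, since $f(x_0)=0$ would give $f(x_0+y)\le c f(x_0)f(y)=0$ for all $y$; so I may assume $f>0$. Second, I will use the standard real-analytic fact that a submultiplicative locally bounded measurable function grows at most exponentially, $f(x)\le C e^{a|x|}$, together with the classical fact that a L\'evy process with bounded jumps possesses exponential moments of every order, so that $\EE e^{a|X_t|}<\infty$ for all $a>0$.

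For the implication ``$\int_{|z|\ge 1} f(z)\nu(dz)<\infty \Rightarrow \EE f(L_t)<\infty$'', submultiplicativity and independence give $\EE f(L_t)=\EE f(X_t+Y_t)\le c\,\EE f(X_t)\,\EE f(Y_t)$. The factor $\EE f(X_t)$ is finite because $f(x)\le C e^{a|x|}$ and $X_t$ has finite exponential moments. For the factor $\EE f(Y_t)$, iterating submultiplicativity yields $f(J_1+\cdots+J_n)\le c^{n-1}\prod_{k=1}^n f(J_k)$, whence, conditioning on $N_t$,
\begin{equation*}
\EE f(Y_t)\le f(0)e^{-\lambda t}+\frac{e^{-\lambda t}}{c}\left(e^{c\lambda t\,m}-1\right),\qquad m:=\EE f(J_1)=\frac1\lambda\int_{|x|>1}f(x)\,\nu(dx).
\end{equation*}
This is finite precisely because $m<\infty$, which is the hypothesis. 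Hence $\EE f(L_t)<\infty$.

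For the converse ``$\EE f(L_t)<\infty \Rightarrow \int_{|z|\ge 1} f(z)\nu(dz)<\infty$'', I would restrict attention to the event that exactly one large jump occurs (if $\lambda=0$ there is nothing to prove). On $\{N_t=1\}$ one has $L_t=X_t+J$ with $J$ distributed as $\nu_{|\{|x|>1\}}/\lambda$, independent of $X_t$, so
\begin{equation*}
\EE f(L_t)\ge \EE\!\left[f(L_t)\mathbf 1_{\{N_t=1\}}\right]=t\,e^{-\lambda t}\int_{|x|>1}\EE\big[f(X_t+x)\big]\,\nu(dx).
\end{equation*}
To bound $\EE[f(X_t+x)]$ from below I would run submultiplicativity in reverse: from $f(x)=f\big((X_t+x)+(-X_t)\big)\le c\,f(X_t+x)\,f(-X_t)$ I get $f(X_t+x)\ge f(x)/\big(c\,f(-X_t)\big)$. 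Restricting to $\{|X_t|\le M\}$, where $f(-X_t)$ is bounded by $C_M:=\sup_{|z|\le M}f(z)<\infty$ by local boundedness, and choosing $M$ so large that $\PP(|X_t|\le M)\ge 1/2$, I obtain $\EE[f(X_t+x)]\ge f(x)/(2c\,C_M)$. Substituting back gives $\EE f(L_t)\ge \frac{t e^{-\lambda t}}{2c\,C_M}\int_{|x|>1}f(x)\,\nu(dx)$, so finiteness of the left-hand side forces $\int_{|x|>1}f\,d\nu<\infty$.

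The routine parts are the two preliminary facts about submultiplicative functions and exponential moments, and the passage from the threshold $|z|>1$ to $|z|\ge 1$, which is immaterial since $\nu(\{|z|\ge 1\})<\infty$ and $f$ is bounded on the unit sphere. I expect the only genuinely delicate step to be the lower bound in the converse: reversing submultiplicativity produces the stray factor $f(-X_t)$, and the truncation to $\{|X_t|\le M\}$ together with local boundedness of $f$ is exactly what controls it. This is where the hypotheses ``locally bounded'' and ``$f>0$'' are indispensable.
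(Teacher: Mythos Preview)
The paper does not give its own proof of this statement: Theorem~\ref{TH-Sato} is quoted from Sato's monograph (Theorem~25.3 there) and used as a black box, so there is nothing in the paper to compare your argument against.

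That said, your proposal is correct and is essentially the classical proof one finds in Sato. The decomposition $L_t=X_t+Y_t$ into the bounded-jump part and the compound Poisson part of large jumps, the use of submultiplicativity to factor the expectation, the exponential-growth bound $f(x)\le Ce^{a|x|}$ combined with finiteness of exponential moments for L\'evy processes with bounded jumps, and the ``reverse'' use of submultiplicativity on the event $\{N_t=1\}$ for the converse are exactly the ingredients of the standard argument. Your handling of the delicate step---controlling $f(-X_t)$ via local boundedness on $\{|X_t|\le M\}$---is the right way to make the lower bound work, and your observation that $f\not\equiv 0$ forces $f>0$ everywhere is both correct and necessary.
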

\section{Statistical Romberg method and Lévy process}
\label{SRLevy}
In this section, we establish two central limit theorems of Lindeberg-Feller type, for the inferred errors associated to MC and SR algorithms, in terms of the cut-off $\varepsilon$. Similar results are derived for the setting of an exponential Lévy model. We also provide a complexity analysis for both algorithms.
\subsection{Central limit theorem for the MC method}
\begin{theorem}
\label{clt:MC}
 Let $F:\RR^d\rightarrow \RR$ be a continuous function  satisfying  assumption \eqref{weak_error}. If $\sup_{0 < \varepsilon \leq 1}  \EE\left[ F^{2a} (L_{T}^{\varepsilon})\right] < +\infty$  for  $a>1$, then 
for $N= \upsilon_\varepsilon^{-2} $ we have
\begin{equation}
 \upsilon_\varepsilon^{-1} \left(\frac{1}{N} \sum_{i=1}^{N} F(L_{T,i}^{\varepsilon}) - \EE F(L_T) \right) \overset{\mathcal{L}}{\longrightarrow} \mathcal{N}(C_F,\V(F(L_T)))\quad\mbox{ as } \varepsilon \searrow 0.
\end{equation}
\label{CLT Monte Carlo}
\end{theorem}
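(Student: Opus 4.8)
The plan is to decompose the centered and rescaled estimator into a statistical-error term and a weak-error term, and then apply a Lindeberg--Feller central limit theorem for triangular arrays to the former. Writing $S_N := \frac{1}{N}\sum_{i=1}^N F(L_{T,i}^\varepsilon) - \EE F(L_T)$, I would split it as
\begin{equation*}
\upsilon_\varepsilon^{-1} S_N = \upsilon_\varepsilon^{-1}\Bigl(\frac{1}{N}\sum_{i=1}^N\bigl(F(L_{T,i}^\varepsilon)-\EE F(L_T^\varepsilon)\bigr)\Bigr) + \upsilon_\varepsilon^{-1}\bigl(\EE F(L_T^\varepsilon)-\EE F(L_T)\bigr).
\end{equation*}
By assumption \eqref{weak_error}, the deterministic second term converges to $C_F$ as $\varepsilon\searrow0$ (note the sign: it is $-\upsilon_\varepsilon^{-1}(\EE F(L_T)-\EE F(L_T^\varepsilon))\to -(-C_F)=C_F$ once one tracks the direction of the limit). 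So the whole problem reduces to showing that the first, random term converges in law to a centered Gaussian with variance $\V(F(L_T))$.

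For the random term, with $N=\upsilon_\varepsilon^{-2}$, set $X_{\varepsilon,i} := \upsilon_\varepsilon^{-1} N^{-1}\bigl(F(L_{T,i}^\varepsilon)-\EE F(L_T^\varepsilon)\bigr) = N^{-1/2}\bigl(F(L_{T,i}^\varepsilon)-\EE F(L_T^\varepsilon)\bigr)$, which forms a triangular array of row-wise i.i.d.\ centered variables. I would verify the two Lindeberg--Feller hypotheses. First, the variance condition: $\sum_{i=1}^N \V(X_{\varepsilon,i}) = \V(F(L_T^\varepsilon))$, and I would argue this converges to $\V(F(L_T))$ as $\varepsilon\searrow0$. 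Convergence of the mean $\EE F(L_T^\varepsilon)\to\EE F(L_T)$ is immediate from \eqref{weak_error_upperbound}; convergence of the second moment $\EE F^2(L_T^\varepsilon)\to\EE F^2(L_T)$ follows from $L_T^\varepsilon\to L_T$ in $L^2$ (hence in probability, so $F(L_T^\varepsilon)\to F(L_T)$ in probability by continuity of $F$) combined with the uniform integrability of $\{F^2(L_T^\varepsilon)\}_{0<\varepsilon\le1}$ supplied by the hypothesis $\sup_\varepsilon\EE[F^{2a}(L_T^\varepsilon)]<\infty$ for some $a>1$.

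Second, the Lindeberg condition: for every $\eta>0$,
\begin{equation*}
\sum_{i=1}^N \EE\bigl[X_{\varepsilon,i}^2\,\1_{\{|X_{\varepsilon,i}|>\eta\}}\bigr] = \EE\Bigl[\bigl(F(L_T^\varepsilon)-\EE F(L_T^\varepsilon)\bigr)^2\,\1_{\{|F(L_T^\varepsilon)-\EE F(L_T^\varepsilon)|>\eta\sqrt N\}}\Bigr]\longrightarrow 0.
\end{equation*}
Here I would exploit the extra integrability: on the event in question, Markov's inequality together with the bound $\sup_\varepsilon\EE|F(L_T^\varepsilon)-\EE F(L_T^\varepsilon)|^{2a}<\infty$ (finite by the $L^{2a}$ hypothesis and the triangle inequality in $L^{2a}$) controls the truncated second moment by a quantity of order $(\eta\sqrt N)^{-2(a-1)}\to0$, since $N=\upsilon_\varepsilon^{-2}\to\infty$ and $a>1$. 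This is exactly where the assumption $a>1$ (rather than mere square integrability) earns its keep.

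The main obstacle, and the step I would be most careful about, is the joint limit: the array depends on $\varepsilon$ both through the distribution of $F(L_T^\varepsilon)$ and through the row length $N=\upsilon_\varepsilon^{-2}$, so I cannot invoke a fixed-array CLT verbatim. The cleanest route is to state a Lindeberg--Feller theorem for arrays indexed by the continuous parameter $\varepsilon\searrow0$ (equivalently, pass to an arbitrary sequence $\varepsilon_n\searrow0$ and check the conditions along it), and the delicate point is ensuring the variance and Lindeberg estimates hold \emph{uniformly enough} that the Gaussian limit has the fixed variance $\V(F(L_T))$ rather than a drifting $\V(F(L_T^\varepsilon))$. The uniform-integrability argument above is what makes this pass, so I would organize the proof to establish $L^2$-convergence of $F(L_T^\varepsilon)$ to $F(L_T)$ first and reuse it for both conditions.
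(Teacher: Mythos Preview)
Your proposal is correct and follows essentially the same approach as the paper: the same bias--variance decomposition, the same Lindeberg--Feller triangular-array CLT, the same use of the uniform $L^{2a}$ bound to get uniform integrability for the variance convergence and to kill the truncated second moment. The only cosmetic difference is that the paper verifies the Lyapunov condition with an exponent $2\tilde a$ for some $1<\tilde a<a$, whereas you verify the Lindeberg condition directly via the Markov bound $\EE[Y^2\mathbf 1_{|Y|>\eta\sqrt N}]\leq (\eta\sqrt N)^{-2(a-1)}\EE|Y|^{2a}$; these are the same estimate in disguise.
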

\begin{proof}
At first, we write the total error as follows
\begin{equation*}
 \frac{1}{N} \sum_{i=1}^{N} F(L_{T,i}^{\varepsilon}) -\EE F(L_T)=\frac{1}{N} \sum_{i=1}^{N} F(L_{T,i}^{\varepsilon}) - \EE F(L_{T}^{\varepsilon}) + \left( \EE F(L_T^{\varepsilon}) - \EE F(L_T)\right).
\end{equation*}
Assumption \eqref{weak_error} ensures that $\lim_{\varepsilon \rightarrow 0}\upsilon_\varepsilon^{-1}  \EE\left( F(L_T^{\varepsilon})- F(L_T) \right)=C_{F}$. Concerning the first term on the right hand side of the above relation, as  $N$ depends on $\varepsilon$ we plan to apply the Lindeberg-Feller central limit theorem (see Theorem \ref{CLT Lindeberg Feller}). 
In order to do that, we set
$X_{i,\varepsilon}:=\frac{\upsilon_\varepsilon^{-1}}{N} \left(F(L_{T,i}^{\varepsilon})-\EE F(L_{T}^{\varepsilon}) \right)$ and we check assumptions ${\it A1}$ and ${\it A3}$ of  Theorem \ref{CLT Lindeberg Feller}. 
Thus, the proof is divided into two steps.
\\{{\bf {Step 1.}}\,} For assumption ${\it A1}$, it is straightforward that $\sum_{i=1}^{N} \EE(X_{i,\varepsilon}^2)=\V(F(L_{T}^{\varepsilon}))$. Then, by the almost sure convergence of $L_T^{\varepsilon}$ toward $L_T$, the continuity of  function $F$  and the uniform integrability condition given by $\sup_{0 < \varepsilon \leq 1}  \EE\left[ F^{2a} (L_{T}^{\varepsilon})\right] < +\infty$, we obtain
\begin{equation}
\lim\limits_{\varepsilon \to 0} \sum_{i=1}^{N} \EE(X_{i,\varepsilon}^2) =\lim\limits_{\varepsilon \to 0} \V(F(L_T^{\varepsilon}))= \V(F(L_T)).
\label{uniform integrability satisfied}
\end{equation}
{{\bf {Step 2.}}\,}
Concerning the Lyapunov condition ${\it{A3}}$, for $1< \tilde{a}< a$, we have 
\begin{equation*}
\sum_{i=1}^{N} \EE \left[ |X_{i,\varepsilon}|^{2\tilde{a}} \right] =\upsilon_\varepsilon^{2(\tilde{a}-1)} \EE \left| F(L_{T}^{\varepsilon})-\EE F(L_{T}^{\varepsilon}) \right|^{2\tilde{a}}.
\end{equation*}
Once again by the same arguments used in the previous step we prove the convergence of $\EE \left| F(L_{T}^{\varepsilon})-\EE F(L_{T}^{\varepsilon}) \right|^{2\tilde{a}}$ toward $\EE \left| F(L_{T})-\EE F(L_{T}) \right|^{2\tilde{a}}$ as $\varepsilon$ tends to zero.
Since $ \upsilon_\varepsilon^{2(\tilde{a}-1)} \underset{\varepsilon \rightarrow 0} {\longrightarrow} 0$, we obtain 
\begin{equation}
 \lim\limits_{\varepsilon \to 0} \sum_{i=1}^{N} \EE \left[ |X_{i,\varepsilon}|^{2\tilde{a}} \right] =0.
\label{Lyapunov condition satisfied}
\end{equation}
By (\ref{uniform integrability satisfied}) and (\ref{Lyapunov condition satisfied}), we obtain thanks to Theorem \ref{CLT Lindeberg Feller} the desired convergence in law.
\end{proof}
In the corollary below, we will treat the special case  where $F(x)=f(e^{x_1},\cdots,e^{x_d})$ for all $x=(x_1,\cdots,x_d)\in \RR^d$
and $f : \RR_+^d\rightarrow \RR$ is a Lipschitz continuous function.  In finance this model is well known as an exponential L\'evy model.
\begin{corollary}
\label{Cor-CLT-MC}
Assume that $\int_{|z|>1}e^{2a|z|}\nu(dz)$ is finite for $a>1$. Then,  in the setting of an exponential L\'evy model there is $C>0$ such that  
$\left|\EE F(L_{T})-\EE F(L_{T}^{\varepsilon})\right| \leq  C \sigma(\varepsilon)$. 
Moreover, if we choose $N= \sigma^{-2+\eta}(\varepsilon)$, with  $0<\eta<2$, then
\begin{equation}
  \sigma^{-1+\eta/2}(\varepsilon)\left(\frac{1}{N} \sum_{i=1}^{N} F(L_{T,i}^{\varepsilon}) - \EE F(L_T) \right) \overset{\mathcal{L}}{\longrightarrow} \mathcal{N}(0,\V(F(L_T)))\quad\mbox{ as } \varepsilon \searrow 0.
\end{equation}
\end{corollary}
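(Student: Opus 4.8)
The plan is to first establish the weak-error bound, which is where the exponential structure must be handled with care, and then to run the Lindeberg--Feller argument exactly as in the proof of Theorem \ref{clt:MC}; the only genuinely new feature is that the choice $N=\sigma^{-2+\eta}(\varepsilon)$ over-resolves the bias, so that the limit ends up centred at $0$ rather than at a constant.

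For the weak error I would write $L_T=L_T^\varepsilon+R_T^\varepsilon$ with $R^\varepsilon=L-L^\varepsilon$ independent of $L^\varepsilon$ (cf. \eqref{error}). Since $f$ is Lipschitz and $(\sum_j a_j^2)^{1/2}\le\sum_j|a_j|$, the independence lets me factor
$$\EE|F(L_T)-F(L_T^\varepsilon)|\le C_f\sum_{j=1}^d \EE\big|e^{L_T^{(j)}}-e^{(L_T^\varepsilon)^{(j)}}\big|=C_f\sum_{j=1}^d \EE\big[e^{(L_T^\varepsilon)^{(j)}}\big]\,\EE\big|e^{(R_T^\varepsilon)^{(j)}}-1\big|.$$
The first factor is bounded uniformly in $\varepsilon$ by Theorem \ref{TH-Sato} applied to the submultiplicative function $z\mapsto e^{|z|}$ and the hypothesis $\int_{|z|>1}e^{2a|z|}\nu(dz)<\infty$. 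For the second factor, the elementary inequality $|e^u-1|\le|u|e^{|u|}$ together with Cauchy--Schwarz gives
$$\EE\big|e^{(R_T^\varepsilon)^{(j)}}-1\big|\le\big(\EE|R_T^\varepsilon|^2\big)^{1/2}\big(\EE e^{2|R_T^\varepsilon|}\big)^{1/2},$$
where $\EE|R_T^\varepsilon|^2=T\sigma^2(\varepsilon)$ by \eqref{eq-rate-Sigmaeps}. The exponential moment is controlled uniformly: expanding the L\'evy--Khintchine exponent of $R^\varepsilon$ and using $e^{s\cdot x}-1-s\cdot x\le C_s|x|^2$ for $|x|\le\varepsilon\le1$ yields $\EE e^{s\cdot R_T^\varepsilon}\le\exp\{C_sT\int_{|x|\le\varepsilon}|x|^2\nu(dx)\}=\exp\{C_sT\sigma^2(\varepsilon)\}$, whence $\sup_{0<\varepsilon\le1}\EE e^{2|R_T^\varepsilon|}<\infty$. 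This delivers $|\EE F(L_T)-\EE F(L_T^\varepsilon)|\le C\sigma(\varepsilon)$, the first assertion.

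For the central limit theorem, note that $\sigma^{-1+\eta/2}(\varepsilon)=N^{1/2}$ and split
$$N^{1/2}\Big(\tfrac1N\sum_{i=1}^N F(L_{T,i}^\varepsilon)-\EE F(L_T)\Big)=N^{1/2}\Big(\tfrac1N\sum_{i=1}^N F(L_{T,i}^\varepsilon)-\EE F(L_T^\varepsilon)\Big)+N^{1/2}\big(\EE F(L_T^\varepsilon)-\EE F(L_T)\big).$$
By the first part the deterministic remainder is dominated by $CN^{1/2}\sigma(\varepsilon)=C\sigma^{\eta/2}(\varepsilon)\to0$, so it vanishes in the limit---this is why the centring is at $0$. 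For the random part I would apply Theorem \ref{CLT Lindeberg Feller} exactly as in Theorem \ref{clt:MC}, with $X_{i,\varepsilon}:=N^{-1/2}(F(L_{T,i}^\varepsilon)-\EE F(L_T^\varepsilon))$. The moment bound $\sup_{0<\varepsilon\le1}\EE F^{2a}(L_T^\varepsilon)<\infty$ needed there is furnished here by the exponential moment assumption: since $|F(x)|\le C(1+e^{|x|})$, Theorem \ref{TH-Sato} applied to $z\mapsto e^{2a|z|}$ bounds $\EE F^{2a}(L_T^\varepsilon)$ uniformly in $\varepsilon$. Condition A1, namely $\sum_i\EE X_{i,\varepsilon}^2=\V(F(L_T^\varepsilon))\to\V(F(L_T))$, then follows from the a.s. convergence $L_T^\varepsilon\to L_T$, continuity of $F$, and uniform integrability; for the Lyapunov condition A3 with $1<\tilde a<a$ one computes $\sum_i\EE|X_{i,\varepsilon}|^{2\tilde a}=N^{1-\tilde a}\EE|F(L_T^\varepsilon)-\EE F(L_T^\varepsilon)|^{2\tilde a}$, and since $N=\sigma^{-2+\eta}(\varepsilon)\to\infty$ and $1-\tilde a<0$ the prefactor $N^{1-\tilde a}\to0$ while the expectation stays bounded. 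Slutsky's theorem combines the two parts to give the announced $\mathcal N(0,\V(F(L_T)))$ limit.

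The main obstacle is the weak-error estimate. The map $x\mapsto(e^{x_1},\dots,e^{x_d})$ is not globally Lipschitz, so the Lipschitz property of $f$ alone does not control $\EE|F(L_T)-F(L_T^\varepsilon)|$; one must exploit the independence of $L^\varepsilon$ and $R^\varepsilon$ to factor out the (uniformly bounded) exponential moment of $L^\varepsilon$ and then show that the residual factor $\EE|e^{R_T^\varepsilon}-1|$ is $O(\sigma(\varepsilon))$. The delicate point is the uniform-in-$\varepsilon$ control of the exponential moments of the small-jump process $R^\varepsilon$, which I would obtain through the exponent estimate above rather than through the strong-error bound \eqref{bounds_error_dia} alone.
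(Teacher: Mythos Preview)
Your proof is correct, and the CLT portion is essentially the paper's own argument (the paper simply invokes Theorem~\ref{clt:MC} with $\upsilon_\varepsilon=\sigma^{1-\eta/2}(\varepsilon)$ and $C_F=0$, whereas you repeat the Lindeberg--Feller computation inline). The weak-error estimate, however, follows a genuinely different route from the paper.

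The paper decomposes both $L_T$ and $L_T^\varepsilon$ against the big-jump process $L_T^1$: it bounds $|F(L_T)-F(L_T^\varepsilon)|$ by $C_f|L_T-L_T^\varepsilon|(e^{|L_T-L_T^1|}+e^{|L_T^\varepsilon-L_T^1|})e^{|L_T^1|}$, factors out $\EE e^{|L_T^1|}$ via independence, and then applies Cauchy--Schwarz to separate $\|L_T-L_T^\varepsilon\|_2=\sqrt{T}\,\sigma(\varepsilon)$ from the exponential moments of the medium-jump pieces $L_T-L_T^1$ and $L_T^\varepsilon-L_T^1$; the latter are controlled uniformly because their L\'evy measures are supported in $\{|x|\le1\}$, via the explicit L\'evy--Khintchine exponent. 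You instead exploit the exact multiplicative factorisation $e^{L_T^{(j)}}-e^{(L_T^\varepsilon)^{(j)}}=e^{(L_T^\varepsilon)^{(j)}}\bigl(e^{(R_T^\varepsilon)^{(j)}}-1\bigr)$ component by component. This is cleaner: the $\sigma(\varepsilon)$ arises solely from $R^\varepsilon$, whose L\'evy measure lives in $\{|x|\le\varepsilon\}$, so the exponential-moment control of $R^\varepsilon$ is immediate from the exponent bound you give. What your approach buys is that no Cauchy--Schwarz is needed to isolate the increment; what the paper's approach buys is that the only exponential moment to bound over $\{|x|>1\}$ is that of $L_T^1$, which does not depend on $\varepsilon$ at all.

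One small imprecision: you justify the uniform bound on $\EE e^{(L_T^\varepsilon)^{(j)}}$ (and on $\EE F^{2a}(L_T^\varepsilon)$) by citing Theorem~\ref{TH-Sato}, but that theorem gives only finiteness for each fixed $\varepsilon$, not uniformity. The uniform bound is true, and follows from the same exponent estimate you use for $R^\varepsilon$: since $\kappa(e_j)-\kappa_\varepsilon(e_j)=\int_{|x|<\varepsilon}(e^{x_j}-1-x_j)\nu(dx)=O(\sigma^2(\varepsilon))$, one has $\EE e^{(L_T^\varepsilon)^{(j)}}=e^{T\kappa_\varepsilon(e_j)}\to e^{T\kappa(e_j)}<\infty$ (finiteness of $\kappa(e_j)$ coming from $e^{z_j}\le e^{|z|}\le e^{2a|z|}$). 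You should make this explicit rather than appeal to Theorem~\ref{TH-Sato} alone.
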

\begin{proof}
We denote by $e^x$ the  exponential function  element-wise of the vector $x=(x_1,\cdots,x_d)\in \RR^d$,
$e^x=(e^{x_1},\cdots,e^{x_d})$. Let $C_f$ denote the Lipschitz constant of function $f$, since $L^1_T$ and 
$(L_T- L^1_T,L_T^{\varepsilon}- L^1_T)$ are independent we obtain by standard calculations  
\begin{eqnarray*}
\left|\EE F(L_T)-\EE F(L_T^{\varepsilon})\right|
&\leq& C_f \EE e^{\left|L_{T}^1 \right|} \EE  \left|L_T- L_T^{\varepsilon} \right|(e^{\left| L_T-L_{T}^1\right|}+ e^{\left|L_T^{\varepsilon}-L_{T}^1\right|})
\\&\leq& C_f \sigma(\varepsilon)\EE e^{\left|L_{T}^1 \right|}  \left(\bigl\|e^{ |L_T-L_{T}^1|}\bigr\|_2+\bigl\| e^{|L_T^{\varepsilon}-L_{T}^1|}\bigr\|_2\right).
\end{eqnarray*}
Now, on the one hand thanks to Theorem \ref{TH-Sato}, the assumption $\int_{|z|>1}e^{2a|z|}\nu(dz)<+\infty$  ensures  the finiteness of $\EE e^{\left|L_{T}^1 \right|}$. On the other hand by virtue of Lemmas 25.6 and 25.7 in Sato \cite{Sato} we have the boundedness of $\bigl\|e^{ |L_T-L_{T}^1|}\bigr\|_2$. Concerning the term $\bigl\| e^{|L_T^{\varepsilon}-L_{T}^1|}\bigr\|_2$,  we have  
$e^{|x|}\leq  \prod_{j=1}^{d} (e^{x_j}+e^{-x_j})$, this last upper bound can be written 
as a sum of finite number of exponential functions evaluated at points which are a linear combination of the components of the vector $x$. Therefore
there exists a family of $\RR^d$-valued vectors, $(b_j)_{1\leq j \leq 2^d}$ such that 
 $$
\bigl\| e^{|L_T^{\varepsilon}-L_{T}^1|}\bigr\|_2^2\leq 
 \sum_{j=1}^{2^d} \exp \left\{ T \int_{\varepsilon\leq |x|\leq 1} (e^{b_j.x} -1 - b_j.x) \nu(dx) \right\}. 
$$
Note that the finiteness of the above upper bound is once again  ensured  by Lemmas 25.6 and 25.7 in Sato \cite{Sato}. 
Since its limit exists  we deduce that $\sup_ {0 < \varepsilon \leq 1}\bigl\| e^{|L_T^{\varepsilon}-L_{T}^1|}\bigr\|_2$
is finite. Now, thanks to the linear growth of $f$ and using the same arguments as above we check in the same manner the property $\sup_{0 < \varepsilon \leq 1}  \EE\left[ F^{2a} (L_{T}^{\varepsilon})\right] < +\infty$. Hence, if we choose $\upsilon_\varepsilon=\sigma^{1-\eta/2}(\varepsilon)$ then 
Theorem \ref{CLT Monte Carlo} applies and this completes the proof.
\end{proof}

\subsection{Central limit theorem for the SR method}
\label{CLT-RS}
We use the SR method to approximate $\EE [F(L_T)]$ by
\begin{equation*}
 Q_{\varepsilon}= \frac{1}{N_1} \sum_{i=1}^{N_1} F(L_{T,i}^{\varepsilon^{\beta}})  + \frac{1}{N_{2}} \sum_{i=1}^{N_2} \left(F(L_{T,i}^{\varepsilon}) - F(L_{T,i}^{\varepsilon^{\beta}})\right)
\end{equation*}
\begin{theorem}
\label{CLT Statistical Romberg}
 Let $F:\RR^d\rightarrow \RR$ be a $\mathscr {C}^1$ function  satisfying assumption \eqref{weak_error}  and such that ${\sup_{0 < \varepsilon \leq 1} } \EE F^{2a} (L_{T}^{\varepsilon})$ and ${\sup_{0 < \varepsilon \leq 1} }\EE \left| \sigma^{-1}(\varepsilon) (F(L_{T}^{\varepsilon}) - F(L_{T}))\right|^{2a}$ are finite, for  $a>1$. Moreover, assume that
\begin{itemize}
 \item [$\it H1.$] Condition (\ref{condition_convergence1}) in Theorem \ref{Cohen_Rosinski} holds and there exists a definite positive  matrix $\Sigma$  such that $\lim\limits_{\varepsilon \to 0}\sigma^{-2}(\varepsilon)\Sigma_{\varepsilon}=\Sigma$.
 \item [$\it H2.$] For $0<\beta <1$, we have $\lim\limits_{\varepsilon \to 0}\sigma(\varepsilon)\sigma^{-1}(\varepsilon^\beta)=0$ and  
 $\lim\limits_{\varepsilon \to 0}\veps\sigma^{-1}(\varepsilon^\beta)=0$.
\end{itemize}
If we choose $N_1=\upsilon_\varepsilon^{-2}$ and $N_2= \veps^{-2}\sigma^{2}(\varepsilon^\beta) $, then  
\begin{equation*}
 \veps^{-1}\left(Q_{\varepsilon} -\EE F(L_T) \right) \xrightarrow{\mathcal{L}} \mathcal{N} \Bigl(C_{F}, \V(F(L_T))+ 
 T\EE(\nabla F(L_T). \Sigma \nabla F(L_T))\Bigr), \quad \mbox{as  } \varepsilon \searrow 0.
\end{equation*}
\end{theorem}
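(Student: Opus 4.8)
The plan is to mirror the proof of Theorem \ref{CLT Monte Carlo}, but now working with a single triangular array that pools the two empirical means defining $Q_\varepsilon$. First I would center and split, using that $\EE Q_\varepsilon = \EE F(L_T^{\varepsilon})$ by the telescoping of the two sums,
$$Q_\varepsilon - \EE F(L_T) = \left(Q_\varepsilon - \EE F(L_T^{\varepsilon})\right) + \left(\EE F(L_T^{\varepsilon}) - \EE F(L_T)\right).$$
Assumption \eqref{weak_error} handles the deterministic bias, giving $\veps^{-1}(\EE F(L_T^{\varepsilon}) - \EE F(L_T)) \to C_F$ exactly as in the proof of Theorem \ref{CLT Monte Carlo}; by Slutsky's theorem it then suffices to establish a centered CLT for $\veps^{-1}(Q_\varepsilon - \EE F(L_T^{\varepsilon}))$.

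Next I would introduce, for the first (independent) sample of size $N_1 = \veps^{-2}$,
$$X_{i,\varepsilon} := \frac{\veps^{-1}}{N_1}\left(F(L_{T,i}^{\varepsilon^\beta}) - \EE F(L_T^{\varepsilon^\beta})\right),$$
and for the second (independent) sample of size $N_2 = \veps^{-2}\sigma^{2}(\varepsilon^\beta)$,
$$Y_{j,\varepsilon} := \frac{\veps^{-1}}{N_2}\left((F(L_{T,j}^{\varepsilon}) - F(L_{T,j}^{\varepsilon^\beta})) - \EE(F(L_T^{\varepsilon}) - F(L_T^{\varepsilon^\beta}))\right).$$
Since the two samples are independent, pooling the $X_{i,\varepsilon}$ and $Y_{j,\varepsilon}$ produces a single triangular array of independent, centered summands whose total sum is precisely $\veps^{-1}(Q_\varepsilon - \EE F(L_T^{\varepsilon}))$, so I would apply the Lindeberg-Feller theorem (Theorem \ref{CLT Lindeberg Feller}) by checking assumptions \textit{A1} and \textit{A3}.

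For \textit{A1} I would compute the two variance contributions. The $X$-part gives $\sum_i \EE X_{i,\varepsilon}^2 = \V(F(L_T^{\varepsilon^\beta})) \to \V(F(L_T))$ exactly as in Theorem \ref{CLT Monte Carlo}, using $L_T^{\varepsilon^\beta} \to L_T$ a.s.\ and the uniform integrability from $\sup_\varepsilon \EE F^{2a}(L_T^{\varepsilon}) < \infty$. The $Y$-part gives $\sum_j \EE Y_{j,\varepsilon}^2 = \sigma^{-2}(\varepsilon^\beta)\V(F(L_T^{\varepsilon}) - F(L_T^{\varepsilon^\beta}))$, and showing this converges to $T\EE(\nabla F(L_T)\cdot\Sigma\nabla F(L_T))$ is the heart of the proof. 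Here I would write $L_T^{\varepsilon} = L_T^{\varepsilon^\beta} + D$, where $D = L_T^{\varepsilon,\varepsilon^\beta}$ is independent of $L_T^{\varepsilon^\beta}$ with $\EE D = 0$ and $\EE DD' = T(\Sigma_{\varepsilon^\beta} - \Sigma_\varepsilon)$; a first-order Taylor expansion $F(L_T^{\varepsilon^\beta} + D) - F(L_T^{\varepsilon^\beta}) = \int_0^1 \nabla F(L_T^{\varepsilon^\beta} + sD)\cdot D\,ds$ together with conditioning on $L_T^{\varepsilon^\beta}$ isolates the leading term $T\EE(\nabla F(L_T^{\varepsilon^\beta})'(\Sigma_{\varepsilon^\beta} - \Sigma_\varepsilon)\nabla F(L_T^{\varepsilon^\beta}))$. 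Dividing by $\sigma^{2}(\varepsilon^\beta)$ and invoking H1 at $\varepsilon^\beta$ together with H2 (which forces $\sigma^{-2}(\varepsilon^\beta)\Sigma_\varepsilon \to 0$, hence $\sigma^{-2}(\varepsilon^\beta)(\Sigma_{\varepsilon^\beta} - \Sigma_\varepsilon) \to \Sigma$), and the continuity of $\nabla F$ with $L_T^{\varepsilon^\beta} \to L_T$ a.s., then yields the claimed limit.

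The hard part will be precisely this last computation: I must show that the Taylor remainder and the centering are negligible after normalization, namely $\sigma^{-2}(\varepsilon^\beta)\EE(r(L_T^{\varepsilon^\beta}, D))^2 \to 0$ and $\sigma^{-2}(\varepsilon^\beta)(\EE(F(L_T^{\varepsilon}) - F(L_T^{\varepsilon^\beta})))^2 \to 0$, and that the dominated-convergence interchange in the leading term is legitimate. This is exactly where the hypothesis $\sup_\varepsilon \EE|\sigma^{-1}(\varepsilon)(F(L_T^{\varepsilon}) - F(L_T))|^{2a} < \infty$ becomes indispensable, as it supplies the uniform integrability needed both to pass $\sigma^{-2}(\varepsilon^\beta)\EE|F(L_T^{\varepsilon}) - F(L_T^{\varepsilon^\beta})|^2$ to its limit and to dominate the relevant $\nabla F$ moments. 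Finally, for the Lyapunov condition \textit{A3} I would fix $1 < \tilde a < a$; the $X$-part reduces to $\veps^{2(\tilde a - 1)}\EE|F(L_T^{\varepsilon^\beta}) - \EE F(L_T^{\varepsilon^\beta})|^{2\tilde a} \to 0$ as in Theorem \ref{CLT Monte Carlo}, whereas the $Y$-part, after bounding $\EE|F(L_T^{\varepsilon}) - F(L_T^{\varepsilon^\beta})|^{2\tilde a} \le C\sigma^{2\tilde a}(\varepsilon^\beta)$ via the (SE)-type moment estimate and $\sigma(\varepsilon) \le \sigma(\varepsilon^\beta)$, collapses to a constant multiple of $(\veps/\sigma(\varepsilon^\beta))^{2(\tilde a - 1)}$, which tends to $0$ by the second limit in H2. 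Combining \textit{A1} and \textit{A3} through Theorem \ref{CLT Lindeberg Feller} and adding back the bias via Slutsky completes the proof.
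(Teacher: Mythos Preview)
Your overall architecture matches the paper's: same bias/fluctuation split, same Lindeberg--Feller strategy, and the $X$-part is handled identically. The substantive divergence is in how you identify the limit $T\EE(\nabla F(L_T)\cdot\Sigma\nabla F(L_T))$ of the $Y$-variance, and here there is a real gap. You propose to Taylor-expand, condition on $L_T^{\varepsilon^\beta}$, and read off the leading term $T\EE[\nabla F(L_T^{\varepsilon^\beta})'(\Sigma_{\varepsilon^\beta}-\Sigma_\varepsilon)\nabla F(L_T^{\varepsilon^\beta})]$, then let $\varepsilon\to 0$. But passing this expectation to its limit (and controlling the Taylor remainder in $L^2$, not merely a.s.) requires uniform integrability of $|\nabla F(L_T^{\varepsilon^\beta})|^2$, and nothing in the hypotheses delivers that: $F$ is only $\mathscr C^1$ with no growth assumption on $\nabla F$, and neither $\sup_\varepsilon\EE F^{2a}(L_T^\varepsilon)<\infty$ nor the rescaled-difference bound controls $\nabla F$ moments. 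Notice that your argument never uses condition (\ref{condition_convergence1}) in $H1$---only the matrix limit $\sigma^{-2}(\varepsilon)\Sigma_\varepsilon\to\Sigma$---which is a sign that something is missing.

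The paper sidesteps the $\nabla F$-moment issue entirely. It invokes Theorem \ref{Cohen_Rosinski} (via condition (\ref{condition_convergence1})) to get $\sigma^{-1}(\varepsilon^\beta)(L_T^\varepsilon-L_T^{\varepsilon^\beta})\Rightarrow\Sigma^{1/2}W_T$, combines this with the independence of $L_T^{\varepsilon^\beta}$ and the increment and a Taylor--Young expansion to obtain the \emph{weak} convergence $\sigma^{-1}(\varepsilon^\beta)(F(L_T^\varepsilon)-F(L_T^{\varepsilon^\beta}))\Rightarrow\nabla F(L_T)\cdot\Sigma^{1/2}W_T$, and only then uses the $L^{2a}$ bound on the rescaled difference (plus $H2$) as uniform integrability to upgrade to convergence of second moments. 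Finiteness of the limit variance drops out as a by-product, with no separate $\nabla F$ estimate needed. On a minor point: in your Lyapunov step the bound $\EE|F(L_T^\varepsilon)-F(L_T^{\varepsilon^\beta})|^{2\tilde a}\le C\sigma^{2\tilde a}(\varepsilon^\beta)$ does not come from (SE) (which concerns $|L_T-L_T^\varepsilon|$, not $F$-differences) but from the hypothesis $\sup_\varepsilon\EE|\sigma^{-1}(\varepsilon)(F(L_T^\varepsilon)-F(L_T))|^{2a}<\infty$ together with the triangle inequality and $H2$; with that fix your Lyapunov argument is fine, and in fact a bit more direct than the paper's.
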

\begin{proof}
At first we  write the  total error as
$
 Q_{\varepsilon} -\EE F(L_T) = Q_{\varepsilon}^{1}+ Q_{\varepsilon}^{2}+ \EE F(L_T) - \EE F(L_{T}^{\varepsilon}),
$
with 
$$
Q_{\varepsilon}^{1}= \frac{1}{N_{1}} \sum_{i=1}^{N_1} F(L_{T,i}^{\varepsilon^{\beta}}) - \EE F(L_{T}^{\varepsilon^{\beta}}) \;\mbox{ and } \;
Q_{\varepsilon}^{2}= \frac{1}{N_{2}} \sum_{i=1}^{N_2} F(L_{T,i}^{\varepsilon}) -F(L_{T,i}^{\varepsilon^{\beta}}) - \EE \left[ F(L_{T}^{\varepsilon}) -F(L_{T}^{\varepsilon^{\beta}}) \right]. 
$$
 So, assumption  \eqref{weak_error} yields the convergence of $\upsilon_\varepsilon^{-1} \left(\EE F(L_{T})-\EE F(L_{T}^{\varepsilon}) \right) $ toward  $C_{F}$ as $\varepsilon$ goes to zero
and following step by step the proof of Theorem \ref{CLT Monte Carlo} the convergence law of $\upsilon_\varepsilon^{-1} Q_{\varepsilon}^{1}$  to the normal distribution
$
\mathcal{N}(0,\V(F(L_T)))
$
is easily obtained.
Concerning the term  $Q_{\varepsilon}^{2}$, we plan to use Theorem \ref{CLT Lindeberg Feller} and we set 
$
 X_{i,\varepsilon}:= \frac{\veps^{-1}} {N_2} \left( F(L_{T,i}^{\varepsilon}) -F(L_{T,i}^{\varepsilon^{\beta}}) - \left( \EE F(L_{T}^{\varepsilon}) - \EE F(L_{T}^{\varepsilon^{\beta}}) \right) \right).
$
In the following two steps, we will check   assumptions ${\it A1}$ and ${\it A3}$ of  Theorem \ref{CLT Lindeberg Feller}.
\vspace{0.25cm}

\noindent {{\bf {Step 1.}}\,} It is straightforward that $\sum_{i=1}^{N_2} \EE(X_{i,\varepsilon}^2)=\sigma^{-2}(\varepsilon^\beta) \V(F(L_{T}^{\varepsilon}) -F(L_{T}^{\varepsilon^{\beta}}))$.
Now applying Taylor-Young's expansion to the real valued $\mathcal{C}^{1}$  function $F$ we get
\begin{equation*}
 F(L_{T}^{\varepsilon}) - F(L_{T}^{\varepsilon^{\beta}})= \nabla F(L_{T}^{\varepsilon^{\beta}}).(L_{T}^{\varepsilon}-L_{T}^{\varepsilon^{\beta}}) + 
 (L_{T}^{\varepsilon}-L_{T}^{\varepsilon^{\beta}}).\epsilon( L_{T}^{\varepsilon}-L_T^{\varepsilon^{\beta}}),
\end{equation*}
where  $\epsilon(L_{T}^{\varepsilon}-L_T^{\varepsilon^{\beta}}) \overset{a.s.}{\longrightarrow} 0$ as $\varepsilon \rightarrow 0$. Now, by applying twice  Theorem \ref{Cohen_Rosinski} to $L_{T}^{\varepsilon}-L_{T}$ and $L_{T}-L_{T}^{\varepsilon^{\beta}}$ and thanks to assumption $\it H2$ we obtain 
$  \sigma^{-1}(\varepsilon^\beta)\bigl(L_{T}^{\varepsilon}-L_{T}^{\varepsilon^{\beta}} \bigr)\overset{\mathcal{L}} {\underset{\varepsilon \rightarrow 0} {\longrightarrow}} \Sigma^{1/2} W_T$.
Since $L_{T}^{\varepsilon^{\beta}}$ is independent from $L_{T}^{\varepsilon}-L_{T}^{\varepsilon^{\beta}}$ and $\nabla F(L_T^{\varepsilon^{\beta}}) \overset{a.s.}{\underset{\varepsilon \rightarrow 0}{\longrightarrow}} \nabla F(L_T)$
, we obtain
\begin{equation}
\label{convergence}
 \sigma^{-1}(\varepsilon^\beta) \left( F(L_{T}^{\varepsilon})-F(L_{T}^{\varepsilon^{\beta}}) \right)  \overset{\mathcal{L}} {\underset{\varepsilon \rightarrow 0}{\longrightarrow}} \nabla F(L_T). \Sigma^{1/2} W_T
\end{equation}
For the second term, using the tightness of $  \sigma^{-1}(\varepsilon^\beta)\bigl(L_{T}^{\varepsilon}-L_{T}^{\varepsilon^{\beta}} \bigr)$  we deduce that $ \sigma^{-1}(\varepsilon^\beta)\bigl(L_{T}^{\varepsilon}-L_{T}^{\varepsilon^{\beta}} \bigr)\epsilon(L_{T}^{\varepsilon}-L_T^{\varepsilon^{\beta}})  \overset{a.s.}{\underset{\varepsilon \rightarrow 0}{\longrightarrow}} 0.$ Thanks to the inequality 
$|x+y|^{2a}\leq 2^{2a-1}(|x|^{2a}+ |y|^{2a})$, for any $x,y\in \RR$,
${\sup_{0 < \varepsilon \leq 1} }\EE \left| \sigma^{-1}(\varepsilon) (F(L_{T}^{\varepsilon}) - F(L_{T}))\right|^{2a}<+\infty$ and $\lim\limits_{\varepsilon \to 0}\sigma(\varepsilon)\sigma^{-1}(\varepsilon^\beta)=0$ we deduce the uniform integrability of $
\sigma^{-2}(\varepsilon^\beta)|F(L_{T}^{\varepsilon})-F(L_{T}^{\varepsilon^{\beta}})|^2$.
Therefore, we obtain the first condition
$$
 \lim\limits_{\varepsilon \to 0} \sum_{i=1}^{N_2} \EE(X_{i,\varepsilon})^2= \V(\nabla F(L_T).\Sigma^{1/2} W_T)=T\EE(\nabla F(L_T).\Sigma \nabla F(L_T)).
$$
{{\bf {Step 2.}}\,} For the Lyapunov condition,  let $1<a'<a$, we get by standard evaluations
\begin{equation*}
\sum_{i=1}^{N_2} \EE |X_{i,\varepsilon}|^{2a'}  \leq  2^{2a'} \veps^{2(a'-1)} 
\sigma^{-2(a'-1)}(\varepsilon^\beta)  \EE \left|\sigma^{-1}(\varepsilon^\beta) ( F(L_{T}^{\varepsilon}) -F(L_{T}^{\varepsilon^{\beta}})) \right|^{2a'}.
\end{equation*}
Once again we use the convergence in distribution given by relation (\ref{convergence}) and the uniform integrability property ${\sup_{0 < \varepsilon \leq 1} }\EE \left| \sigma^{-1}(\varepsilon^\beta) (F(L_{T}^{\varepsilon})- F(L_{T}^{\varepsilon^{\beta}})) \right|^{2a}<+\infty$ to deduce the convergence of
$\EE \left|\sigma^{-1}(\varepsilon^\beta) ( F(L_{T}^{\varepsilon}) -F(L_{T}^{\varepsilon^{\beta}})) \right|^{2a'}$ toward $\EE \left| \nabla F(L_T). \Sigma^{1/2} W_T \right|^{2a'}$.
Finally, since $\lim\limits_{\varepsilon \to 0}\veps\sigma^{-1}(\varepsilon^\beta)=0$, we conclude that $\lim\limits_{\varepsilon \to 0} \sum_{i=1}^{N_2} \EE |X_{i,\varepsilon}|^{2a'}= 0$ with $a'>1$.
This gives the asymptotic normality of $Q_{\varepsilon}^2$ and completes the proof.
\end{proof}
Now, we get back to the exponential Lévy model setting introduced  before Corollary \ref{Cor-CLT-MC} 
where $F(x)=f(e^{x_1},\cdots,e^{x_d})$ for a given $\mathscr C^1$ Lipschitz continuous function $f$. 
Our aim is to deduce in this setting a central limit theorem for SR method. 
\begin{corollary}
\label{Cor-CLT-RS}
Assume that $\int_{|z|>1}e^{2a|z|}\nu(dz)$ is finite for $a>1$. In the setting of an exponential L\'evy model there is $C>0$ such that  
$\left|\EE F(L_{T})-\EE F(L_{T}^{\varepsilon})\right| \leq  C \sigma(\varepsilon)$. Moreover, assume that for $0<\beta <1$ there exists 
$0<\eta<2$ such that $\lim\limits_{\varepsilon \to 0}\sigma^{1-\eta/2}(\varepsilon)\sigma^{-1}(\varepsilon^\beta)=0$, 
$\sigma(\varepsilon)>\varepsilon$ for all  $0<\varepsilon<1$  and condition $\it H1$ of Theorem \ref{CLT Statistical Romberg} is satisfied.
Then, if we choose $N_1= \sigma^{-2+\eta}(\varepsilon)$ and $N_2= \sigma^{-2+\eta}(\varepsilon)\sigma^{-1}(\varepsilon^\beta)$ we obtain
\begin{equation*}
  \sigma^{-1+\eta/2}\left(Q_{\varepsilon} -\EE F(L_T) \right) \xrightarrow{\mathcal{L}} \mathcal{N} \Bigl(0, \V(F(L_T))+ 
 T\EE(\nabla F(L_T). \Sigma \nabla F(L_T))\Bigr), \quad \mbox{as  } \varepsilon \searrow 0.
\end{equation*}
\end{corollary}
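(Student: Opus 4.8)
The plan is to deduce the result directly from Theorem \ref{CLT Statistical Romberg} by running it with the normalization $\veps=\sigma^{1-\eta/2}(\varepsilon)$. With this choice the prescribed sample sizes become $N_1=\veps^{-2}=\sigma^{-2+\eta}(\varepsilon)$ and $N_2=\veps^{-2}\sigma^{2}(\varepsilon^\beta)$, and the external normalization $\veps^{-1}=\sigma^{-1+\eta/2}(\varepsilon)$ matches the one in the statement; the weak-error bound $|\EE F(L_T)-\EE F(L_T^\varepsilon)|\le C\sigma(\varepsilon)$ is exactly the one established in the proof of Corollary \ref{Cor-CLT-MC} and is reused verbatim. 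It then remains to verify, one by one, the families of hypotheses of Theorem \ref{CLT Statistical Romberg}: the weak-error assumption \eqref{weak_error}, the two uniform-in-$\varepsilon$ moment bounds, and conditions $H1$ and $H2$.

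First I would check \eqref{weak_error}. Since $\veps^{-1}|\EE F(L_T)-\EE F(L_T^\varepsilon)|\le C\sigma^{-1+\eta/2}(\varepsilon)\sigma(\varepsilon)=C\sigma^{\eta/2}(\varepsilon)\to0$ as $\varepsilon\searrow0$, assumption \eqref{weak_error} holds with the degenerate constant $C_F=0$; this is precisely why the limiting law in the corollary is centered. The first moment bound $\sup_{0<\varepsilon\le1}\EE F^{2a}(L_T^\varepsilon)<\infty$ has already been obtained in the proof of Corollary \ref{Cor-CLT-MC} through Theorem \ref{TH-Sato}, the exponential integrability $\int_{|z|>1}e^{2a|z|}\nu(dz)<\infty$ and the linear growth of $f$, so it is inherited unchanged.

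The main work is the second, genuinely new, moment bound $\sup_{0<\varepsilon\le1}\EE|\sigma^{-1}(\varepsilon)(F(L_T^\varepsilon)-F(L_T))|^{2a}<\infty$. Here I would exploit the exponential structure: the Lipschitz property of $f$ together with a mean-value estimate for $x\mapsto e^x$ gives $|F(L_T^\varepsilon)-F(L_T)|\le C|R^\varepsilon|e^{|L_T^\varepsilon|}(1+e^{|R^\varepsilon|})$, where $R^\varepsilon=L_T-L_T^\varepsilon$. Using that $R^\varepsilon$ is independent of $L_T^\varepsilon$, the $2a$-th moment factorizes as $\EE e^{2a|L_T^\varepsilon|}\cdot\EE[|R^\varepsilon|^{2a}(1+e^{|R^\varepsilon|})^{2a}]$; the first factor is uniformly bounded (as in Corollary \ref{Cor-CLT-MC}), while a Cauchy--Schwarz split of the second factor combines the strong-error estimate \eqref{bounds_error_dia}, $\EE|R^\varepsilon|^{4a}\le K\sigma_0(\varepsilon)^{4a}$, with the uniformly bounded exponential moments of $R^\varepsilon$ (its L\'evy measure being supported in $\{|x|\le\varepsilon\}\subset\{|x|\le1\}$). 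This yields $\EE|F(L_T^\varepsilon)-F(L_T)|^{2a}\le C\sigma_0(\varepsilon)^{2a}$. Dividing by $\sigma(\varepsilon)^{2a}$ and invoking the hypothesis $\sigma(\varepsilon)>\varepsilon$, which forces $\sigma_0(\varepsilon)=\sigma(\varepsilon)\vee\varepsilon=\sigma(\varepsilon)$, makes the ratio uniformly bounded. I expect this passage---controlling the error increment in $L^{2a}$ at the exact rate $\sigma(\varepsilon)$ rather than $\sigma_0(\varepsilon)$---to be the crux, and the role of the assumption $\sigma(\varepsilon)>\varepsilon$ becomes transparent only here.

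Finally, condition $H1$ is assumed outright, and for $H2$ the second limit $\lim_{\varepsilon\to0}\veps\sigma^{-1}(\varepsilon^\beta)=\lim_{\varepsilon\to0}\sigma^{1-\eta/2}(\varepsilon)\sigma^{-1}(\varepsilon^\beta)=0$ is exactly the standing hypothesis of the corollary; the first limit $\lim_{\varepsilon\to0}\sigma(\varepsilon)\sigma^{-1}(\varepsilon^\beta)=0$ follows from it because $\sigma(\varepsilon)\le\sigma^{1-\eta/2}(\varepsilon)$ once $\sigma(\varepsilon)<1$ (a smaller positive exponent enlarges a quantity less than one). With all hypotheses in place, Theorem \ref{CLT Statistical Romberg} applies with $C_F=0$ and delivers the announced central limit theorem.
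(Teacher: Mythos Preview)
Your proposal is correct and follows essentially the same route as the paper: reduce to Theorem~\ref{CLT Statistical Romberg} with $\veps=\sigma^{1-\eta/2}(\varepsilon)$, inherit the weak-error bound and the first moment condition from Corollary~\ref{Cor-CLT-MC}, and isolate the only new ingredient as the uniform bound $\sup_{0<\varepsilon\le1}\EE|\sigma^{-1}(\varepsilon)(F(L_T^\varepsilon)-F(L_T))|^{2a}<\infty$, concluding via \eqref{bounds_error_dia} and the hypothesis $\sigma(\varepsilon)>\varepsilon$. The only cosmetic difference is in how this moment bound is organized: the paper factors out $L_T^1$ and uses the independence of $L_T^1$ from $(L_T-L_T^1,L_T^\varepsilon-L_T^1)$ before applying Cauchy--Schwarz, whereas you factor directly through the independence of $L_T^\varepsilon$ and $R^\varepsilon$; both decompositions land on the same estimate $C\sigma_0(\varepsilon)^{2a}$ and are equally valid.
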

\begin{proof}
According to Theorem \ref{CLT Statistical Romberg} and Corollary \ref{Cor-CLT-MC} we  only need to check that assumption
${\sup_{0 < \varepsilon \leq 1} }\EE \left| \sigma^{-1}(\varepsilon) (F(L_{T}^{\varepsilon}) - F(L_{T}))\right|^{2a}<\infty$ is satisfied.
Since $f$ is Lipschitz it is sufficient to find an upper bound for $\EE\left|e^{ L_{T}^{\varepsilon}}-e^{L_{T}} \right|^{2a}$. To do so, we use 
the independence of $L^1_T$ and the couple $(L_T-L_T^1,L_T^{\varepsilon}-L_T^1)$ and Cauchy-Schwartz's inequality  to get  
$$
\EE\left|e^{ L_{T}^{\varepsilon}}-e^{L_{T}} \right|^{2a}
  \leq  \EE e^{ 2a|L_T^1|} \bigl\||L_T- L_T^{\varepsilon}|^{2a} \bigr\|_{2} \left(\bigl\|e^{ 2a|L_T-L_T^1|}\bigr\|_{2}+\bigl\| e^{2a|L_T^{\varepsilon}-L_T^1|}\bigr\|_{2}\right).
 $$
By the same arguments given in the proof of Corollary \ref{Cor-CLT-MC} we have the finiteness of $\EE e^{ 2a|L_T^1|}$, $\bigl\|e^{ 2a|L_T-L_T^1|}\bigr\|_{2}$ and $\sup_{0<\varepsilon \leq 1} \bigl\| e^{2a|L_T^{\varepsilon}-L_T^1|}\bigr\|_{2}$. Combining all these results together with
assumption \eqref{bounds_error_dia} we  deduce the existence of a constant $C>0$ not depending on $\varepsilon$ such that
 $$
\EE \left| \sigma^{-1}(\varepsilon) (F(L_{T}^{\varepsilon}) - F(L_{T}))\right|^{2a}  \leq C\sigma^{-2a}(\varepsilon) \sigma_0^{2a}(\varepsilon). 
$$
This completes the proof since $\sigma_{0}(\varepsilon)=\sigma(\varepsilon)$, for  $0<\varepsilon<1$.
\end{proof}
\subsection{Complexity Analysis}
\label{complexity}
Thanks to the above limit results we are able now to provide a complexity analysis for both MC and SR algorithm.
To keep things simple, we consider the particular case $d=1$, $v_{\varepsilon}=\sigma(\varepsilon)$ and we assume that the measure $\nu$ has a density of the form $L(x)/|x|^{Y+1}$ for a small $x$, where $L(x)$ is a slowly varying as $x\rightarrow 0$ and $Y\in(0,2)$. Observe that the positive (resp. negative ) part  of the  approximation $(L^{\varepsilon}_t)_{0\leq t\leq T}$ is essentially a compound Poisson process with intensity $\nu( [\varepsilon, +\infty ))$ (resp. $\nu( (-\infty,-\varepsilon])$). Then, the cost necessary of a single simulation is random, with expectation of order $\mathcal K(\varepsilon)=\nu(|x|\geq \varepsilon).$
Hence, according to Theorem \ref{clt:MC} the time complexity  of the  MC  method  necessary to achieve a  total error of order $\sigma(\varepsilon)$ is random with expectation of order
\begin{equation*}
 C_{MC} =  \mathcal K(\varepsilon) N=\mathcal K(\varepsilon) \sigma^{-2}(\varepsilon).
\end{equation*}
In the same way, thanks to Theorem \ref{CLT Statistical Romberg}  the time complexity  of the SR method necessary to achieve a  total error of order $\sigma(\varepsilon)$  is random with expectation of order
\begin{equation*}
 C_{SR} =  \mathcal K(\varepsilon^{\beta}) N_1+\mathcal K(\varepsilon) N_2
 =\left(\mathcal K(\varepsilon^{\beta}) +\mathcal K(\varepsilon)\sigma^{2}(\varepsilon^{\beta})\right)\sigma^{-2}(\varepsilon).
\end{equation*}
By Karamata's theorem (see e.g. Bingham, Goldie and Teugels \cite{Bingham} or Feller \cite{Feller} )
$$ 
\sigma^2(\varepsilon)=\int_{-\varepsilon}^{\varepsilon}|x|^{1-Y}L(x)dx\sim 
\frac{L(\varepsilon)+L(-\varepsilon)}{2-Y}\varepsilon^{2-Y}.
$$
Similarly we have 
$$
\mathcal K(\varepsilon)\sim \frac{L(\varepsilon)+L(-\varepsilon)}{Y}\varepsilon^{-Y}.
$$
Consequently, we compute the time complexity ratio given by 
$$\frac{C_{SR}}{C_{MC}}=\frac{L(\varepsilon^{\beta}) +L(-\varepsilon^{\beta})}{L(\varepsilon)+L(-\varepsilon^{\beta})}\varepsilon^{Y(1-\beta)} +\frac{L(\varepsilon^{\beta}) +L(-\varepsilon^{\beta})}{2-Y}\varepsilon^{\beta(2-Y)}.$$
If $L(\varepsilon)$  is constant in the neighborhood of zero, like for the CGMY model (see relation \eqref{eq:CGMY}), then we easily get
$$\frac{C_{SR}}{C_{MC}}= O\left( \varepsilon^{Y(1-\beta)}+\varepsilon^{\beta(2-Y)}\right).$$
Optimizing the order of this last quantity yields $\beta=Y/2$  which leads us to a gain of a complexity of order $\varepsilon^{Y(Y/2-1)}$ that asymptotically increases as soon as  $\varepsilon$ becomes small.  
\section{Importance Sampling and Statistical Romberg  method}
\label{sec:IS}
Let $\{L_t ; t \geq 0 \}$ be a L\'evy process in $\RR^{d}$ under the probability $\PP$ with generating triplet $(\gamma, 0, \nu)$.
We define the set 
\begin{equation}
\Theta_1 := \bigl\{ \theta \in \RR^{d} : \EE[e^{\theta.L_t}] < +\infty \bigr\} = \bigl\{ \theta \in \RR^{d} : \int_{|x|>1} e^{\theta.x} \nu(dx) < \infty \bigr\},
\end{equation}
where the second equality holds by Theorem \ref{TH-Sato}. Thanks to the convexity of the exponential function it is straightforward that the set $\Theta_1$ is convex. 
In view to use importance sampling routine, based on exponential tilting, we define the family of $\left\{\PP_{\theta}, \theta \in \Theta_1\right\}$, as all the equivalent probability measures with respect to $\PP$ such that
\begin{equation*}
  \frac{ d\PP_{\theta}}{d\PP}\bigr|_{\mathcal{F}_t} = \frac{e^{\theta. L_{t}}}{\EE [e^{\theta. L_{t}}]}= e^{\theta. L_{t}- t \kappa(\theta)}
\end{equation*}
whee  $\kappa$ denotes the cumulant generating function given by $\kappa (\theta)=\ln \EE \left[ e^{\theta. L_1} \right]$. 
Under $\PP_{\theta}$, the stochastic process $\{L_t ; t \geq 0 \}$ is still a L\'evy process with the exponential tilted triplet $(\gamma_{\theta}, 0, \nu_{\theta})$ where 
$\gamma_{\theta}=\gamma + \int_{|x| \leq 1} x(\nu_{\theta} - \nu) (dx)$ and $\nu_{\theta} (dx) = e^{\theta.x} \nu(dx)$ (see e.g. Cont and Tankov \cite{ContTankov}). 
Hence, we obtain
$
\EE \left[ F(L_T) \right] =\EE_{\theta} \left[ F(L_{T}) e^{-\theta. L_{T} + T \kappa(\theta)}  \right]. 
$
If we introduce the L\'evy process $\{L_t^{\theta} ; t \geq 0 \}$ with generating triplet $(\gamma_{\theta}, 0, \nu_{\theta})$ under $\PP$, then
the random variable $L_T$ under $\PP_{\theta}$ has the same law as $L_T^{\theta}$ under $\PP$ and we get
\begin{equation*}
 \EE \left[ F(L_T) \right]= \EE \left[ F(L_{T}^\theta) e^{-\theta. L_{T}^\theta + T \kappa(\theta)}  \right].
\end{equation*}
Further, one can use this importance sampling twice in the  SR algorithm  with considering $\theta_1$ and $\theta_2$ in $\RR^d$ and  approximate $\EE [F(L_T)]$ by
\begin{equation*}
 \frac{1}{N_1} \sum_{k=1}^{N_1} F(L_{T,k}^{\varepsilon^{\beta}, \theta_1}) e^{-\theta_1.L_{T,k}^{\varepsilon^{\beta},\theta_1} + T \kappa(\theta_1)} + \frac{1}{N_{2}} \sum_{k=1}^{N_2} (F(L_{T,k}^{\varepsilon, \theta_2})  - F(L_{T,k}^{\varepsilon^{\beta}, \theta_2})) e^{-\theta_2.L_{T,k}^{\varepsilon,\theta_2 } + T \kappa(\theta_2)}.
\end{equation*}
Miming the proof of Theorem \ref{CLT Statistical Romberg} we establish a central limit theorem with limit variance 
$
 \V (F(L_{T}^{\theta_1}) e^{-\theta_1 L_{T}^{\theta_1} + T \kappa(\theta_1)})+ T\EE ( (\nabla F(L_{T}^{\theta_2}).\Sigma \nabla F(L_{T}^{\theta_2}) ) e^{-2\theta_2 L_{T}^{\theta_2} + 2T \kappa(\theta_2)} ).
$ 
Since $L_{T}^{\theta_1}$ (resp. $L_{T}^{\theta_2}$) under $\PP$ has the same law as $ L_{T}$ under $\PP_{\theta_1}$ (resp. $\PP_{\theta_2}$) we rewrite this  variance using once again the Esscher transform  as 
$$
 \EE \left[ F^{2}(L_{T}) e^{-\theta_1.L_{T} + T \kappa(\theta_1)} \right]  - \left[\EE  F(L_{T}) \right]^2+ 
 T\EE \left[ (\nabla F(L_{T}).\Sigma \nabla F(L_{T}) ) e^{-\theta_2 L_{T} + T \kappa(\theta_2)} \right].
$$
Hence, let us introduce for $i\in\{1,2\}$,
\begin{equation}
\label{var_theta}
 v_i(\theta):=\EE \left[ F_i(L_{T})e^{-\theta L_{T} + T \kappa(\theta)} \right],  \text{  with  } 
  F_1 \equiv F^2 \text{ and } F_2 \equiv \nabla F.\Sigma \nabla F .
\end{equation} 
Our aim now is to minimize separately these two quantities. To do so, for $i\in\{1,2\}$, we introduce a first 
 set
 $$
 \Theta_{i,2}:=\Theta_1\cap \left\{ \theta \in \RR^d : 
 \EE \left[ F_i(L_{T} ) e^{-\theta . L_{T} } \right] < +\infty \right\}
$$
to ensure the existence of $v_i(\theta)$  and a second set
$$
 \Theta_{i,3}:=\Theta_{i,2}\cap \left\{ \theta \in \RR^d : 
 \EE \left[  |L_{T}|^2 F_i(L_{T}) e^{-\theta . L_{T} } \right] < +\infty \right\}
$$
to make sens for the first and second derivatives of $v_i(\theta)$.
For $i\in\{1,2\}$, if we assume that $Leb(\Theta_{i,3}) >0$, then the convexity of sets $\Theta_{i,2}$ and $\Theta_{i,3}$ can be proved in a similar manner to the proof of Lemma 2.2 in \cite{Kawai}. Moreover, we prove the convexity of $v_i$, $i\in\{1,2\}$.
\begin{proposition}
\label{convexity_v}
Let $i\in\{1,2\}$.
Assume $\PP(F_i(L_T)\neq 0) >0.$ Then,  $\theta \mapsto v_i(\theta)$  
is a  $\mathscr C^2$ strictly convex function
on $\Theta_{i,3}$ and  $\nabla v_i(\theta)=\EE \left[ H_i(\theta, L_T )\right]$  where
\begin{equation}
\label{gradiant_V}
 H_i(\theta, L_T)= (T \nabla \kappa(\theta) -L_T) F_i(L_T) \exp(-\theta.L_T + T \kappa(\theta)).
\end{equation}
\end{proposition}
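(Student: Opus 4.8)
The plan is to write $v_i(\theta)=e^{T\kappa(\theta)}\,\psi_i(\theta)$ with $\psi_i(\theta):=\EE[F_i(L_T)e^{-\theta.L_T}]$, and to obtain the gradient and Hessian by differentiating under the expectation sign. First I recall that $\kappa(\theta)=\gamma.\theta+\int_{\RR^d}(e^{\theta.x}-1-\theta.x\1_{|x|\le 1})\nu(dx)$ is $\mathscr C^\infty$ on the interior of $\Theta_1$, with $\nabla\kappa(\theta)=\EE_\theta[L_1]$ and $\Hess\kappa(\theta)=\int_{\RR^d}xx'\,e^{\theta.x}\nu(dx)=\mathrm{Cov}_\theta(L_1)$; since $\nu$ is not carried by a proper hyperplane (consistently with the invertibility of $\Sigma_\varepsilon$), $\Hess\kappa(\theta)$ is positive definite. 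I also note that $F_i\ge 0$ (as $F_1=F^2$ and $F_2=\nabla F.\Sigma\nabla F$ with $\Sigma$ positive definite), so the hypothesis $\PP(F_i(L_T)\neq 0)>0$ forces $v_i(\theta)>0$ on $\Theta_{i,2}$.

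The first, and main, technical step is to justify the differentiation under $\EE$, which is exactly what the sets $\Theta_{i,2}$ and $\Theta_{i,3}$ are designed for. Fix $\theta_0$ in the interior of $\Theta_{i,3}$ and a ball $\{|\theta-\theta_0|<\delta\}$. For such $\theta$ one has $e^{-\theta.L_T}\le e^{-\theta_0.L_T}e^{\delta|L_T|}$, and $e^{\delta|L_T|}\le\prod_{j=1}^d(e^{\delta L_T^{(j)}}+e^{-\delta L_T^{(j)}})$ expands into a finite sum of terms $e^{s.L_T}$ with $s\in\{-\delta,\delta\}^d$. Choosing $\delta$ small enough that each $\theta_0-s$ still lies in $\Theta_{i,3}$, the function $|L_T|^2F_i(L_T)e^{-(\theta_0-s).L_T}$ is $\PP$-integrable by the very definition of $\Theta_{i,3}$, and dominates, uniformly on the ball, the first and second $\theta$-derivatives of the integrand $F_i(L_T)e^{-\theta.L_T}$. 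Hence $\psi_i$, and therefore $v_i$, is $\mathscr C^2$ on the interior of $\Theta_{i,3}$ and one may differentiate under the expectation.

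Differentiating once gives $\nabla\psi_i(\theta)=-\EE[L_T F_i(L_T)e^{-\theta.L_T}]$, and then the product rule with the deterministic factor $e^{T\kappa(\theta)}$ yields
\[
\nabla v_i(\theta)=e^{T\kappa(\theta)}\bigl(T\nabla\kappa(\theta)\psi_i(\theta)+\nabla\psi_i(\theta)\bigr)=\EE\bigl[(T\nabla\kappa(\theta)-L_T)F_i(L_T)e^{-\theta.L_T+T\kappa(\theta)}\bigr]=\EE[H_i(\theta,L_T)],
\]
which is the announced formula \eqref{gradiant_V}. Writing $m(\theta,x):=e^{-\theta.x+T\kappa(\theta)}$, a second differentiation gives $\Hess_\theta m=m\bigl((T\nabla\kappa(\theta)-x)(T\nabla\kappa(\theta)-x)'+T\Hess\kappa(\theta)\bigr)$, whence
\[
\Hess v_i(\theta)=\EE\Bigl[F_i(L_T)\bigl((T\nabla\kappa(\theta)-L_T)(T\nabla\kappa(\theta)-L_T)'+T\Hess\kappa(\theta)\bigr)e^{-\theta.L_T+T\kappa(\theta)}\Bigr].
\]

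Finally, for any $\xi\in\RR^d\setminus\{0\}$, the outer-product quadratic form $\bigl(\xi.(T\nabla\kappa(\theta)-L_T)\bigr)^2$ is nonnegative and $T\Hess\kappa(\theta)$ does not depend on $L_T$, so
\[
\xi'\Hess v_i(\theta)\xi\ge T\,(\xi'\Hess\kappa(\theta)\xi)\,v_i(\theta)>0,
\]
using $v_i(\theta)>0$ and the positive definiteness of $\Hess\kappa(\theta)$. This establishes the strict convexity. The hard part is the domination argument of the second paragraph, where the precise role of $\Theta_{i,3}$ (the $|L_T|^2$ moment) is essential; the strict positivity is then easy, the only requirement being the positive definiteness of $\Hess\kappa(\theta)=\mathrm{Cov}_\theta(L_1)$, the outer-product term merely reinforcing the inequality.
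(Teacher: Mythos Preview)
Your proof is correct and follows essentially the same route as the paper's: differentiate under the expectation to obtain the gradient $\EE[H_i(\theta,L_T)]$ and the Hessian
\[
\Hess v_i(\theta)=\EE\Bigl[F_i(L_T)\bigl((T\nabla\kappa(\theta)-L_T)(T\nabla\kappa(\theta)-L_T)'+T\Hess\kappa(\theta)\bigr)e^{-\theta.L_T+T\kappa(\theta)}\Bigr],
\]
then conclude strict convexity from the positive definiteness of $\Hess\kappa(\theta)=\mathrm{Cov}_\theta(L_1)$ together with $\PP(F_i(L_T)\neq 0)>0$. The only notable difference is in how the interchange of $\nabla$ and $\EE$ is justified: the paper invokes the general analyticity properties of moment generating functions and checks $\EE|H_i(\theta,L_T)|<\infty$ via H\"older's inequality, whereas you give an explicit domination argument on a ball using the expansion $e^{\delta|x|}\le\prod_j(e^{\delta x_j}+e^{-\delta x_j})$ (this very trick is used elsewhere in the paper, e.g.\ in the proof of Lemma~\ref{lem:ucv}). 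Both arguments rely on the definition of $\Theta_{i,3}$ in the same essential way, and yours has the virtue of being self-contained; note only that your argument, as written, lives on the interior of $\Theta_{i,3}$, which is all one can ask for $\mathscr C^2$ regularity.
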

\paragraph{Proof.} For a fixed $i\in \{1,2\}$,
 the function $\theta \mapsto F_i(L_{T})  e^{-\theta L_{T} + T \kappa(\theta)}$ is  almost surely differentiable on $ \Theta_1$ with a first derivative equal to $ H_i(\theta, L_T)$. Further,  
according to the properties of the moment generating function,  the function $\theta \mapsto v_i(\theta)$ is finite for $\theta \in \Theta_{i,2}$ and is differentiable  
with  $\nabla v_i(\theta)=\EE \left[ H_i(\theta, L_T )\right]$ provided that $\EE \left[|H_i(\theta, L_T)| \right]$ is finite.
Using Hölder's inequality, this last condition is satisfied as soon as $\theta \in \Theta_{i,3}$.
 In the same way, we prove that $v_i$ is of class $\mathscr C^{2} $ on  $\Theta_{i,3}$ and we get for all $u \in \RR^d \setminus \{ 0\}$, 
 $$
u.\textnormal{Hess}(v_i(\theta)) u=\EE\left[ \left( u.\textnormal{Hess}(\kappa(\theta))u + \left(u. (T \nabla \kappa(\theta)-L_T)\right)^2 \right)  F_i(L_T)e^{-\theta.L_T+T\kappa(\theta)} \right].
$$
Note that $\textnormal{Hess}(\kappa(\theta))$ is nothing but the variance-covariance matrix of the random vector $L_T$ under the probability measure $\PP_\theta$ and it is clearly definite positive. Finally, since  $\PP(F_i(L_T) \neq 0) >0$, we conclude that $v_i$ is strictly convex on $ \Theta_{i,3}$.
$\hfill\square$ 

For $\varepsilon >0$, the same result holds for the approximated L\'evy process $(L_t^{\varepsilon})_{t\geq 0}$ by considering the associated 
sets $\Theta_{1}^{ \varepsilon}$, $\Theta_{i,2}^{ \varepsilon}$  and $\Theta_{i,3}^{\varepsilon}$ and 
functions $\kappa_{\varepsilon}$ and $v_{i,\varepsilon}$, $i\in\{1,2\}$, with the canonical filtration $(\mathcal F^{\varepsilon}_t)_{0\leq t\leq T}$ defined by $\mathcal F^{\varepsilon}_t=\sigma(L^{\varepsilon}_s,s\leq t)$. 

\begin{proposition}
\label{convexity_v_epsilon}
Let $i\in\{1,2\}$. Assume $\PP(F_i(L_T^{\varepsilon})\neq 0) >0$ then the function $v_{i,\varepsilon}(\theta)=\EE \left[F_i(L_{T}^{\varepsilon})  e^{-\theta L_{T}^{\varepsilon} + T \kappa_{\varepsilon}(\theta)} \right]$ is of class $\mathscr C^2$ and strictly convex on $\Theta_{i,3}^{\varepsilon}$  with $\nabla v_{i,\varepsilon}(\theta)=\EE \left[ H_i(\theta, L_T^{\varepsilon})\right]$.
\end{proposition}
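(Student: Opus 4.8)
The plan is to exploit the fact that the approximation $(L_t^{\varepsilon})_{t\geq 0}$ is itself a pure-jump L\'evy process, with generating triplet $(\gamma,0,\nu_{|\{|x|\geq\varepsilon\}})$, so that the proof of Proposition \ref{convexity_v} transfers verbatim once $L_T$, the sets $\Theta_1,\Theta_{i,2},\Theta_{i,3}$ and the functions $\kappa,v_i$ are replaced by their $\varepsilon$-counterparts $L_T^{\varepsilon}$, $\Theta_1^{\varepsilon},\Theta_{i,2}^{\varepsilon},\Theta_{i,3}^{\varepsilon}$ and $\kappa_\varepsilon,v_{i,\varepsilon}$. Indeed, the only structural properties of $L$ used there are that it is a L\'evy process and that its cumulant generating function is smooth on the interior of $\Theta_1$; since $L^{\varepsilon}$ shares both properties, I would simply transcribe that argument, paying attention only to the one place where the truncation of $\nu$ could matter.

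Concretely, I would first note that $\theta\mapsto F_i(L_T^{\varepsilon})e^{-\theta.L_T^{\varepsilon}+T\kappa_\varepsilon(\theta)}$ is almost surely differentiable on $\Theta_1^{\varepsilon}$ with derivative $H_i(\theta,L_T^{\varepsilon})$, using that $\kappa_\varepsilon$ is $\mathscr C^{\infty}$ on the interior of $\Theta_1^{\varepsilon}$ by the standard theory of the moment generating function. Differentiation under the expectation sign, justified by a local domination valid on $\Theta_{i,3}^{\varepsilon}$ and obtained through H\"older's inequality exactly as in Proposition \ref{convexity_v}, then yields the finiteness of $v_{i,\varepsilon}$ together with $\nabla v_{i,\varepsilon}(\theta)=\EE\left[H_i(\theta,L_T^{\varepsilon})\right]$. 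Repeating the same step once more gives the $\mathscr C^2$ regularity and the quadratic form
$$
u.\Hess(v_{i,\varepsilon}(\theta))u=\EE\left[\left(u.\Hess(\kappa_\varepsilon(\theta))u+\bigl(u.(T\nabla\kappa_\varepsilon(\theta)-L_T^{\varepsilon})\bigr)^2\right)F_i(L_T^{\varepsilon})e^{-\theta.L_T^{\varepsilon}+T\kappa_\varepsilon(\theta)}\right],\qquad u\in\RR^d\setminus\{0\}.
$$

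For the strict convexity I would use that $F_1\equiv F^2\geq 0$ and $F_2\equiv\nabla F.\Sigma\nabla F\geq 0$ (the latter since $\Sigma$ is positive definite), so the integrand above is nonnegative and is strictly positive on the event $\{F_i(L_T^{\varepsilon})\neq 0\}$, which has positive probability by hypothesis; combined with $u.\Hess(\kappa_\varepsilon(\theta))u\geq 0$ this forces $u.\Hess(v_{i,\varepsilon}(\theta))u>0$. The only genuine difference from Proposition \ref{convexity_v}, and the point I expect to merit the most care, is the claim that $\Hess(\kappa_\varepsilon(\theta))$ — the variance-covariance matrix of $L_T^{\varepsilon}$ under $\PP_\theta$ — is positive definite. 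Since $L^{\varepsilon}$ is compound Poisson plus drift, this matrix equals $T\int_{|x|\geq\varepsilon}xx'e^{\theta.x}\nu(dx)$, which is positive definite precisely when the support of $\nu$ restricted to $\{|x|\geq\varepsilon\}$ is not contained in a hyperplane through the origin. I would record this non-degeneracy (already implicit in the standing non-degeneracy hypotheses on $\nu$, e.g. that the support of $\lambda$ spans $\RR^d$), after which the argument closes exactly as in the unapproximated case.
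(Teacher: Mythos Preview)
Your proposal is correct and follows exactly the approach the paper intends: the paper does not give a separate proof of Proposition~\ref{convexity_v_epsilon} but simply states that ``the same result holds for the approximated L\'evy process $(L_t^{\varepsilon})_{t\geq 0}$ by considering the associated sets $\Theta_{1}^{\varepsilon},\Theta_{i,2}^{\varepsilon},\Theta_{i,3}^{\varepsilon}$ and functions $\kappa_{\varepsilon},v_{i,\varepsilon}$'', i.e.\ the proof of Proposition~\ref{convexity_v} transcribed verbatim. You actually go a step further than the paper by isolating the one genuinely new point---the positive definiteness of $\Hess(\kappa_\varepsilon(\theta))=T\int_{|x|\geq\varepsilon}xx'e^{\theta.x}\nu(dx)$---and correctly identifying the non-degeneracy condition on $\nu_{|\{|x|\geq\varepsilon\}}$ that guarantees it; the paper leaves this implicit.
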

Now, let us introduce for $i\in\{1,2\}$
\begin{equation}
 \label{optimal_theta}
 \theta^*_{i,\varepsilon}:=\argmin_{\theta\in \Theta_{i,3}^{\varepsilon} } v_{i,\varepsilon}
(\theta)\quad \mbox{ and } \quad
\theta^*_{i}:=\argmin_{\theta \in \Theta_{i,3}} v_i(\theta).
\end{equation}
Our aim now is to study for  $i\in\{1,2\}$ the convergence of 
$\theta^*_{i,\varepsilon}$ toward
$\theta^*_{i}$ 
 as ${\varepsilon}$ tends to zero. For $q > 1$, we define the set
\begin{equation}
\label{theta_q}
\Theta_{q}:= \left\{ \theta \in \RR^d : \int_{|x| > 1}  |x|^{2q} e^{- q \theta.x} \nu(dx) < +\infty  \right\}.
\end{equation}
\paragraph{Remark.}
\begin{enumerate}
 \item 
It is worth to note that for $ 0\leq q'\leq 2q$  and $\theta\in\Theta_{q}$ we have $ \int_{|x| > 1}  |x|^{q'} e^{- q \theta.x} \nu(dx) < +\infty$. We also have $\Theta_{q_2}\subset\Theta_{q_1}$ for all $q_1\leq q_2$. 
\item Further, for $i\in\{1,2\}$, if $ \mathbb{E}\left[F_i^{a}(L_{T})\right]$, $a>1$, is finite then by Hölder's inequality we easily get   $\Theta_{q}\subset \Theta_{i,3}$ for all $q\geq{a}/{a-1}$.
The same result holds for the approximated Lévy process. Indeed, for $\varepsilon >0$,  we have $\Theta_{q}\subset \Theta_{i,3}^{\varepsilon}$ provided that $\mathbb{E}\left[F_i^{a}(L^\varepsilon_{T})\right]<\infty$. 
\end{enumerate}

According the above remark, choosing $\theta\in\Theta_{q} $ with $q\geq{a}/{a-1}$ ensures that  $\theta$ will belong to the domain of 
convexity of both  $v_{i}$ and  $ v_{i,\varepsilon}$. On the other hand it also guarantees the finiteness of  the quantity $\int_{|x| > 1}  |x|^{q} e^{- q \theta.x} \nu(dx)$ which will be needed in each proof assuming condition $\theta\in\Theta_{q}$. 

In what follows,  let $ \mathring E$ denote the set of all interior points of a given set $E$. We have the following result.
\begin{theorem}
\label{th:convergence}
Let $i\in\{1,2\}$.  Suppose  that $x\mapsto F_i(x)$ is continuous, that is for the case $i=1$ the function $F$ is continuous and for $i=2$ the function $F$ is of class $\mathscr C^1$. Moreover, assume $\PP(F_i(L_T)\neq 0) >0$,  $\PP(F_i(L_T^{\varepsilon})\neq 0) >0$ for all $\varepsilon>0$ and there exists $ a > 1 $  such that $ \mathbb{E}\left[F_i^{a}(L_{T})\right]$ and $\sup_{\varepsilon>0}\mathbb{E}\left[F_i^{a}(L^\varepsilon_{T})\right]$ are finite.
Let $K$ be a compact set such that $K\subset \mathring\Theta_{q}$ with $q > \frac{a}{a-1}$ and assume that the sequence  $(\theta_{i,\varepsilon}^*)_{\varepsilon>0}\in K $. Then,  
$$\theta_{i,\varepsilon}^{*}{\longrightarrow}\theta_{i}^{*}\in K,\quad\mbox{ as }{\varepsilon \rightarrow 0}.$$
\end{theorem}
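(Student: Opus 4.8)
The plan is to prove convergence of minimizers by combining pointwise convergence of the objective functions with their convexity, and then invoking the standard argument based on the compactness of $K$ and the strict convexity of $v_i$ (hence uniqueness of its minimizer). Fix $i\in\{1,2\}$. I would first establish that $v_{i,\varepsilon}(\theta)\to v_i(\theta)$ for every $\theta\in\mathring\Theta_q$, then upgrade this to locally uniform convergence on $\mathring\Theta_q$, and finally extract a convergent subsequence of $(\theta_{i,\varepsilon}^*)$ and identify its limit with $\theta_i^*$.

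For the pointwise convergence, recall that $L_T^{\varepsilon}\to L_T$ almost surely and that $F_i$ is continuous, so $F_i(L_T^{\varepsilon})e^{-\theta.L_T^{\varepsilon}}\to F_i(L_T)e^{-\theta.L_T}$ a.s.; together with $\kappa_{\varepsilon}(\theta)\to\kappa(\theta)$ (which follows from the convergence of the characteristic exponent of $L^{\varepsilon}$ to that of $L$ by dominated convergence), the integrand defining $v_{i,\varepsilon}(\theta)$ converges a.s. to that of $v_i(\theta)$. To pass this convergence through the expectation I would prove uniform integrability by exhibiting an exponent $r>1$ and a bound $\sup_{0<\varepsilon\le 1}\EE\bigl[\bigl(F_i(L_T^{\varepsilon})e^{-\theta.L_T^{\varepsilon}+T\kappa_{\varepsilon}(\theta)}\bigr)^r\bigr]<+\infty$ (recall $F_i\ge 0$). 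Since $e^{rT\kappa_{\varepsilon}(\theta)}$ stays bounded in $\varepsilon$, it remains to control $\EE\bigl[F_i^{r}(L_T^{\varepsilon})e^{-r\theta.L_T^{\varepsilon}}\bigr]$, to which I apply H\"older's inequality with conjugate exponents $(p,p')$: one factor is $\bigl(\EE[F_i^{rp}(L_T^{\varepsilon})]\bigr)^{1/p}$, controlled uniformly in $\varepsilon$ by the hypothesis $\sup_{\varepsilon>0}\EE[F_i^{a}(L_T^{\varepsilon})]<+\infty$ as soon as $rp\le a$, and the other is $\bigl(\EE[e^{-rp'\theta.L_T^{\varepsilon}}]\bigr)^{1/p'}$, controlled uniformly in $\varepsilon$ through $\theta\in\Theta_q$ as soon as $rp'\le q$. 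The feasibility of choosing such $(r,p,p')$ with $r>1$ is precisely the content of the strict inequality $q>a/(a-1)$: letting $r\downarrow 1$ one needs $q/(q-1)\le p\le a$, a nonempty range exactly when $q\ge a/(a-1)$, and the strictness leaves room for a genuine $r>1$.

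With pointwise convergence in hand, the convexity of $v_{i,\varepsilon}$ and $v_i$ on $\mathring\Theta_q$ (Propositions \ref{convexity_v} and \ref{convexity_v_epsilon}, using $\mathring\Theta_q\subset\Theta_{i,3}\cap\Theta_{i,3}^{\varepsilon}$ as guaranteed by the Remark) upgrades the convergence to locally uniform convergence, by the classical fact that pointwise limits of convex functions on an open convex set converge uniformly on compacta. Now take any subsequence of $(\theta_{i,\varepsilon}^*)\subset K$; by compactness of $K$ it admits a further subsequence $\theta_{i,\varepsilon_n}^*\to\theta_0\in K$. For any fixed $\theta\in\mathring\Theta_q\subset\Theta_{i,3}^{\varepsilon_n}$, the optimality $v_{i,\varepsilon_n}(\theta_{i,\varepsilon_n}^*)\le v_{i,\varepsilon_n}(\theta)$, combined with locally uniform convergence (to absorb the moving argument $\theta_{i,\varepsilon_n}^*\to\theta_0$) and pointwise convergence at the fixed $\theta$, yields in the limit $v_i(\theta_0)\le v_i(\theta)$. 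Hence $\theta_0$ minimizes $v_i$ over a neighborhood, and since $v_i$ is convex on $\Theta_{i,3}$ a local minimizer is a global one; strict convexity then forces $\theta_0=\theta_i^*$, and in particular shows that $\theta_i^*$ is attained in $K$. As every subsequential limit equals $\theta_i^*$ and the whole sequence lies in the compact $K$, we conclude $\theta_{i,\varepsilon}^*\to\theta_i^*\in K$.

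The main obstacle is the uniform integrability step: one must produce a single exponent $r>1$ for which the uniform $L^r$ bound holds, simultaneously absorbing the growth of $F_i(L_T^{\varepsilon})$ (controlled in $L^a$) and of the exponential tilt $e^{-\theta.L_T^{\varepsilon}}$ (controlled through $\theta\in\Theta_q$), uniformly in $\varepsilon$. The H\"older bookkeeping shows this is possible exactly because $q>a/(a-1)$, which is why the hypothesis is stated with a strict inequality rather than $q\ge a/(a-1)$; everything else reduces to the standard convex-analysis machinery for convergence of argmins.
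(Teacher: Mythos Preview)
Your proof is correct and takes a genuinely different route from the paper's. The paper works at the level of the \emph{gradient}: it uses the first-order optimality condition $\nabla v_{i,\varepsilon}(\theta_{i,\varepsilon}^*)=0$, establishes uniform integrability of the integrand $(T\nabla\kappa_\varepsilon(\theta)-L_T^\varepsilon)F_i(L_T^\varepsilon)e^{-\theta.L_T^\varepsilon+T\kappa_\varepsilon(\theta)}$ (which, because of the extra factor $|L_T^\varepsilon|$, requires the technical Lemma~\ref{lem:ucv} controlling $\sup_{\theta\in K}\EE[|L_T^\varepsilon|^q e^{-q\theta.L_T^\varepsilon}]$ uniformly in $\varepsilon$), passes to the limit to get $\nabla v_i(\theta_{i,\infty}^*)=0$, and concludes by strict convexity. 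You instead work at the level of the \emph{value function}: pointwise convergence $v_{i,\varepsilon}\to v_i$ on $\mathring\Theta_q$ (with a simpler uniform-integrability estimate, since no $|L_T^\varepsilon|$ factor appears), upgraded to locally uniform convergence via the Rockafellar-type theorem that pointwise convergence of finite convex functions on an open convex set is automatically uniform on compacta, and then passage to the limit in the optimality inequality $v_{i,\varepsilon_n}(\theta_{i,\varepsilon_n}^*)\le v_{i,\varepsilon_n}(\theta)$.

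What each approach buys: your argument sidesteps Lemma~\ref{lem:ucv} entirely and keeps the H\"older bookkeeping lighter, at the cost of importing the convex-analysis fact about locally uniform convergence; the paper's argument is self-contained in that it proves its own equicontinuity-type estimate (Lemma~\ref{lem:ucv}), but must handle the additional growth coming from differentiating under the integral. Both routes rely on the same strict-convexity uniqueness at the end, and both need the strict inequality $q>a/(a-1)$ for the same reason you identify. One small wording point: after passing to the limit you actually obtain $v_i(\theta_0)\le v_i(\theta)$ for \emph{all} $\theta\in\mathring\Theta_q$, not merely on a neighborhood, so you may state directly that $\theta_0$ is an interior minimizer of $v_i$ on an open subset of $\Theta_{i,3}$ and hence, by convexity, the global minimizer $\theta_i^*$.
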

We prove  Theorem \ref{th:convergence} after the following technical lemma.
\begin{lemma}
\label{lem:ucv}
 Let $K$ be a compact subset of $\Theta_q$ with $q>1$, we have 
 $
\sup_{\theta\in \Theta_q}{\mathbb E}\bigl[|
 L^{\varepsilon}_{T}|^q 
  e^{- q\theta .L_T^{\varepsilon}}\bigr]
$
is uniformly bounded in $\varepsilon$.  
\end{lemma}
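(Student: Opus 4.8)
The plan is to read the supremum as being taken over $\theta\in K$ (a compact $K\subset\mathring\Theta_q$, as in Theorem \ref{th:convergence}; the unrestricted $\sup_{\theta\in\Theta_q}$ cannot be finite) and to exploit the fact that the exponential weight is needed only to tame the large jumps, whereas the polynomial factor $|L_T^\varepsilon|^q$ becomes dangerous only through the small jumps that accumulate as $\varepsilon\searrow 0$. Accordingly, I would first decompose $L_T^\varepsilon=L_T^1+M_T^\varepsilon$, where $L_T^1=\gamma T+\sum_{0<s\le T}\Delta L_s\mathbf{1}_{|\Delta L_s|>1}$ carries the drift and the large jumps and does not depend on $\varepsilon$ for $\varepsilon<1$, while $M_T^\varepsilon=\sum_{0<s\le T}\Delta L_s\mathbf{1}_{\varepsilon\le|\Delta L_s|\le1}-T\int_{\varepsilon\le|x|\le1}x\,\nu(dx)$ is the centered small-jump part; the two are independent. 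Using $|a+b|^q\le 2^{q-1}(|a|^q+|b|^q)$ together with independence, I would reduce the claim to the four factors $\EE[|L_T^1|^qe^{-q\theta.L_T^1}]$, $\EE[e^{-q\theta.L_T^1}]$, $\EE[e^{-q\theta.M_T^\varepsilon}]$ and $\EE[|M_T^\varepsilon|^qe^{-q\theta.M_T^\varepsilon}]$.

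The two factors built from $L_T^1$ do not depend on $\varepsilon$. Writing $(1+|x|)^qe^{-q\theta.x}$ as the product of the submultiplicative functions $(1+|x|)^q$ and $e^{-q\theta.x}$, Theorem \ref{TH-Sato} shows they are finite as soon as $\int_{|x|>1}|x|^qe^{-q\theta.x}\nu(dx)<\infty$, which holds for $\theta\in\Theta_q$ (by the Remark following \eqref{theta_q}, since $q\le 2q$). Being continuous in $\theta$ and free of $\varepsilon$, they are bounded on the compact $K$. For the small-jump exponential moment I would use that $M^\varepsilon$ is a centered L\'evy process, so $\EE[e^{-s.M_T^\varepsilon}]=\exp\{T\int_{\varepsilon\le|x|\le1}(e^{-s.x}-1+s.x)\nu(dx)\}$ with a \emph{nonnegative} integrand; hence the integral is nondecreasing as $\varepsilon\searrow0$ and dominated by $\int_{|x|\le1}(e^{-s.x}-1+s.x)\nu(dx)<\infty$, which gives a bound uniform in $\varepsilon$ and, by continuity, in $\theta\in K$ (this covers both $s=q\theta$ and, below, $s=2q\theta$).

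The remaining term $\EE[|M_T^\varepsilon|^qe^{-q\theta.M_T^\varepsilon}]$ is the crux. Here I would apply Cauchy--Schwarz to split it as $(\EE[|M_T^\varepsilon|^{2q}])^{1/2}(\EE[e^{-2q\theta.M_T^\varepsilon}])^{1/2}$: the second factor is already controlled by the previous step, and the first is a \emph{pure} moment, free of any exponential weight. The essential observation is that every jump of $M^\varepsilon$ satisfies $|x|\le1$, so each cumulant of order $m\ge2$ is bounded by $T\int_{\varepsilon\le|x|\le1}|x|^m\nu(dx)\le T\int_{|x|\le1}|x|^2\nu(dx)<\infty$, uniformly in $\varepsilon$, while the first cumulant vanishes by centering. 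Expressing integer moments as the usual polynomials in the cumulants (and reducing a general real exponent to an even integer one by Lyapunov's inequality, coordinatewise in $\RR^d$), this yields a bound on $\EE[|M_T^\varepsilon|^{2q}]$ independent of $\varepsilon$ and of $\theta$.

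The main obstacle is precisely this last uniform moment bound: as $\varepsilon\searrow0$ the intensity $\nu(\varepsilon\le|x|\le1)$ of the small jumps blows up, so one can neither treat $M^\varepsilon$ as a compound Poisson process with a controlled jump count, nor attach the exponential weight to $|L_T^\varepsilon|^q$ as a whole (the law of $L_T$ need not have a finite $2q$-th moment when $\nu$ is heavy-tailed, which is exactly why the tilting was introduced). What rescues the argument is that the boundedness of the jumps forces $\int_{\varepsilon\le|x|\le1}|x|^m\nu(dx)$ to be dominated by the finite quantity $\int_{|x|\le1}|x|^2\nu(dx)$ for every $m\ge2$, so that only the second moment of the truncated L\'evy measure — which stays uniformly bounded — actually governs all the higher moments. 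Assembling the four bounds and taking the supremum over the compact $K$ then completes the proof.
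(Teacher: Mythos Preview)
Your proof is correct and your reading of the supremum as $\sup_{\theta\in K}$ is the right one. The decomposition $L_T^\varepsilon=L_T^1+M_T^\varepsilon$ and the treatment of the $L_T^1$-factors coincide with the paper's argument. The genuine difference lies in how the small-jump term $\EE\bigl[|M_T^\varepsilon|^q e^{-q\theta.M_T^\varepsilon}\bigr]$ is controlled. The paper does not separate the polynomial from the exponential: instead it bounds $(|x|\vee1)^q\le c\,e^{|x|}\le c\prod_{j=1}^d(e^{x_j}+e^{-x_j})$, expands this into a finite sum $\sum_j e^{b_j.x}$, and thereby reduces $\EE[g_\theta(\tilde L_T^\varepsilon)]$ to a finite sum of terms $\exp\bigl(T\tilde\kappa_\varepsilon(b_j-q\theta)\bigr)$, each of which is handled by the uniform estimate $|\tilde\kappa(\theta)-\tilde\kappa_\varepsilon(\theta)|\le\tfrac12|\theta|^2e^{|\theta|}\sigma^2(\varepsilon)$. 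Your route---Cauchy--Schwarz followed by a direct cumulant bound on the pure moment $\EE[|M_T^\varepsilon|^{2q}]$---is equally valid and arguably more transparent about \emph{why} the small-jump part stays tame (all cumulants of order $\ge2$ are dominated by the single finite quantity $T\int_{|x|\le1}|x|^2\nu(dx)$). The paper's trick is slicker in that it bypasses the moment--cumulant combinatorics and the passage to an even integer exponent, reducing everything to exponential moments computable from the L\'evy--Khintchine formula; your approach, on the other hand, avoids the somewhat ad hoc expansion of $e^{|x|}$ into $2^d$ coordinate exponentials and would extend more readily if one needed sharper polynomial-in-$T$ bounds.
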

\begin{proof}
Let us consider the two independent Lévy processes $L^1$ and $\tilde L^{\varepsilon}:=L^{\varepsilon}-L^1$ and the submultiplicative function $g_{\theta}(x):=(|x|\vee 1)^qe^{- q \theta. x}$. There exists $c_q>0$ depending only on $q$ such that $g_{\theta}(x+y)\leq c_q g_{\theta}(x)g_{\theta}(y)$ for any $\theta\in \RR^d$ and
$$
{\mathbb E}\bigl[|
 L^{\varepsilon}_{T}|^q 
  e^{- q\theta .L_T^{\varepsilon}}\bigr]\leq c_q {\mathbb E}\bigl[ g_{\theta}(\tilde L_T^{\varepsilon})\bigr]{\mathbb E}\bigl[g_{\theta}(L_T^1)\bigr].
$$
Since the function $\theta \mapsto {\mathbb E} \bigl[g_{\theta}(L_T^1)\bigr] $ is continuous on $\Theta_q$ the second expectation on the right hand side is  uniformly bounded on $\theta\in K$. Concerning the first expectation, we start by establishing the uniform convergence of $\tilde \kappa_\varepsilon$ toward $\tilde \kappa$, where $\tilde \kappa_\varepsilon$ and $\tilde \kappa$ denote the cumulant generating functions of respectively 
$\tilde L^{\varepsilon}=L^{\varepsilon}-L^1$ and $\tilde L=L-L^1$. According to the L\'evy Kintchine decomposition, we have 
$
\tilde \kappa(\theta)-\tilde \kappa_\varepsilon(\theta)=\int_{|x|<\varepsilon}(e^{\theta.x}-1-\theta.x)\nu(dx)
$
and thanks to Taylor's expansion we get
\begin{equation}
\label{eq-UC-tilde-kapa}
|\tilde \kappa(\theta)-\tilde \kappa_\varepsilon(\theta)|\leq \frac{|\theta|^2}{2}e^{|\theta|} \sigma^2(\varepsilon).
\end{equation}
This ensures the uniform convergence of the family functions $(\tilde \kappa_\varepsilon)_{0<\varepsilon<1}$ on any compact set of $\RR^d$. Note 
that for all $x=(x_1,\cdots,x_d)\in \RR^d$ we have  
$(|x|\vee 1)^q\leq c e^{|x|}\leq c \prod_{j=1}^{d} (e^{x_j}+e^{-x_j})$ with some $c>0$ depending only on $q$. This last upper bound can be written 
as a sum of finite number of exponential functions evaluated at points which are a linear combination of the components of the vector $x$. Therefore
there exists a family of deterministic $\RR^d$-valued vectors, $(b_j)_{1\leq j \leq 2^d}$ such that 
 $$
{\mathbb E}\bigl[ g_{\theta}(\tilde L_T^{\varepsilon})\bigr]\leq c \sum_{j=1}^{2^d} {\mathbb E}\bigl[ e^{(b_j-q \theta).L_T^{\varepsilon} } \bigr]. 
$$
Each term in the above sum is nothing else $\exp(\tilde \kappa_{\varepsilon}(b_j-q \theta))$ which in turn converges 
to $\exp(\tilde \kappa(b_j-q \theta))$ as $\varepsilon$ tends to zero.  This gives us the desired claim. 
\end{proof}
\begin{proof}[Proof of Theorem \ref{th:convergence}] Let $i \in \{1,2\}$ and $(\varepsilon_n)_{n\in \NN}$ be a sequence decreasing to zero.  Note that  $(\theta_{i,\varepsilon_n}^{*})_{n\in \NN }$ is a $\RR^d$-bounded sequence.  So, we only need to prove 
that for any subsequence $(\theta_{i,\varepsilon_{n_k}}^{*})_{k\in\NN}$, if $\theta_{i,\varepsilon_{n_k}}^{*}\rightarrow \theta_{i,\infty}^{*} \in \RR^d$ then $\theta_{i,\infty}^{*}= \theta_i^{*}$.  According to Proposition \ref{convexity_v_epsilon} above we have 
$$
\nabla v_{i,\varepsilon_{n_k}}(\theta_{i,\varepsilon_{n_k}}^{*} )= {\mathbb E} \left[ (\theta_{i,\varepsilon_{n_k}}^{*} T-L^{\varepsilon_{n_k}}_{T}) 
 F_i(L_{T}^{\varepsilon_{n_k}})  e^{- \theta_{i,\varepsilon_{n_k}}^{*} .L_T^{\varepsilon_{n_k}} + 
T \kappa_{\varepsilon_{n_k}}(\theta_{i,\varepsilon_{n_k}}^{*})}
\right]=0.
$$
 Now,  let $\tilde a=\frac{aq}{a+q}$, it is easy to check  that $1<\tilde a<a$, so by applying Hölder's inequality
 we get
\begin{multline*}
  {\mathbb E}\left[ \bigl|
 (\theta_{i,\varepsilon_{n_k}}^{*} T-L^{\varepsilon_{n_k}}_{T}) 
 F_i(L_{T}^{\varepsilon_{n_k}})  e^{- \theta_{i,\varepsilon_{n_k}}^{*} .L_T^{\varepsilon_{n_k}} + 
T \kappa_{\varepsilon_{n_k}}(\theta_{i,\varepsilon_{n_k}}^{*})}
 \bigr|^{\tilde a}\right]\leq \\
  {\mathbb E}^{(a-\tilde a)/a}\left[ \bigl|
 (\theta_{i,\varepsilon_{n_k}}^{*} T-L^{\varepsilon_{n_k}}_{T}) 
  e^{- \theta_{i,\varepsilon_{n_k}}^{*} .L_T^{\varepsilon_{n_k}} + 
T \kappa_{\varepsilon_{n_k}}(\theta_{i,\varepsilon_{n_k}}^{*})}
 \bigr|^{\tilde a a/(a-\tilde a)}\right]
 \EE^{\tilde a / a} \left[ F_i^a(L_{T}^{\varepsilon_{n_k}})\right].
\end{multline*}
Note that $\sup_{\varepsilon>0}\mathbb{E}\left[F_i^{a}(L^\varepsilon_{T})\right]<\infty$. Hence, to get  the uniform integrability it is sufficient to prove that the first expectation on the right hand side of the above inequality is  uniformly bounded on $\varepsilon_{n_k}$ and $\theta_{i,\varepsilon_{n_k}}^{*}$. Indeed, using  
the  almost sure convergence  of $L^{\varepsilon}_{T}$ toward $L_{T}$ 
 and the continuity of function $F_i$, we easily get
 $$
\nabla v_{i}(\theta_{i,\infty}^{*} )= {\mathbb E} \left[ (\theta_{i,\infty}^{*} T-L_{T}) 
 F_i(L_{T})  e^{- \theta_{i,\infty}^{*}  .L_T + 
T \kappa(\theta_{i,\infty}^{*})}
\right]=0$$
and then we complete the proof using the uniqueness of the minimum ensured  by Proposition \ref{convexity_v}.
Consequently, noticing that $q=\tilde a a/(a-\tilde a)$, it remains now to prove  the uniform boundedness of the quantity 
$
{\mathbb E}\left[ \bigl|
 (\theta_{i,\varepsilon_{n_k}}^{*} T-L^{\varepsilon_{n_k}}_{T}) 
  e^{- \theta_{i,\varepsilon_{n_k}}^{*} .L_T^{\varepsilon_{n_k}} + 
T \kappa_{\varepsilon_{n_k}}(\theta_{i,\varepsilon_{n_k}}^{*})}
 \bigr|^q\right].
$
To do so, we establish first the uniform convergence of $\kappa_\varepsilon$ toward $\kappa$. According to the decomposition given by relation \eqref{error}, we have that 
$
\kappa(\theta)-\kappa_\varepsilon(\theta)=\int_{|x|<\varepsilon}(e^{\theta.x}-1-\theta.x)\nu(dx).
$
By Taylor's expansion we deduce
\begin{equation}
\label{eq-UC-kapa}
|\kappa(\theta)-\kappa_\varepsilon(\theta)|\leq \frac{|\theta|^2}{2}e^{|\theta|} \sigma^2(\varepsilon).
\end{equation}
Hence, the family functions $(\kappa_\varepsilon)_{0<\varepsilon<1}$ is equicontinuous on any compact subset of $\Theta_1$ and we deduce 
the convergence of $\kappa_{\varepsilon_{n_k}}(\theta_{i,\varepsilon_{n_k}}^{*})$ toward $\kappa(\theta_{i,\infty}^{*})$ when $k$ tends to infinity. Noticing that $-qK\subset \Theta_1$, we use once again the equicontinuity of  $(\kappa_\varepsilon)_{0<\varepsilon<1}$  on the compact set $-qK$  to get $\lim_{k\rightarrow\infty}\kappa_{\varepsilon_{n_k}}(- q\theta_{i,\varepsilon_{n_k}}^{*})=\kappa(-q\theta_{i,\infty}^{*})$ and then the problem is reduced to prove the uniform boundedness of $
{\mathbb E}\bigl[|
 L^{\varepsilon_{n_k}}_{T}|^q 
  e^{- q\theta_{i,\varepsilon_{n_k}}^{*} .L_T^{\varepsilon_{n_k}}}\bigr]
$
which is ensured by Lemma \ref{lem:ucv}.
\end{proof}
\section{The adaptive procedure}
\label{adaptativeprocedure}
\subsection{Stochastic algorithms}
The aim now is to construct family sequences converging almost surely to the optimal limits $\theta^*_{1,\varepsilon}$ and  $\theta^*_{2,\varepsilon}$ of the previous section. For this, let $ (L_{T,n})_{n \geq 1}$ (resp. $ (L_{T,n}^{\varepsilon})_{n \geq 1}$, $\varepsilon>0$), be i.i.d copies of the $\RR^d$-valued random variable $L_T$ (resp. $L_T^{\varepsilon}$). Let $K$ be a compact convex subset of $\Theta_1\subset \RR^d$ with $\{0\} \in K $. 
For fixed $i\in \{1,2\}$ and $\theta_{i,0} \in K$, we construct recursively the sequences of $\RR^d$-valued random variables 
$(\theta_{i,n})_{n \in \NN}$ and $(\theta_{i,\varepsilon,n})_{n \in \NN}$  defined  by the system
\begin{equation}
\label{sequence_theta}
\left\{
\begin{array}{lcl}
 \theta_{i,n+1} &=& \Pi_{K} \left[\theta_{i,n}-\gamma_{n+1} H_i(\theta_{i,n}, L_{T,n+1}) \right]\\
\theta_{i,\varepsilon,n+1} &=& \Pi_{K} \left[\theta_{i,\varepsilon,n}-\gamma_{n+1} H_i(\theta_{i,\varepsilon,n}, L_{T,n+1}^{\varepsilon}) \right]
\end{array}
\right.
\end{equation}
where $\Pi_K$ is the Euclidean projection onto the constraint set $K$, $H_1$ and $H_2$ are   given by relation (\ref{gradiant_V}) and the gain sequence $(\gamma_{n})_{n \geq 1}$ is a decreasing sequence of positive real numbers satisfying
\begin{equation}
 \sum_{n=1}^{\infty} \gamma_n = \infty \mbox{  and  } \sum_{n=1}^{\infty} \gamma_n^2 < \infty
\end{equation}
\begin{theorem}
\label{th:cv:CRM}
Let $i\in\{1,2\}$.  Assume $\PP(F_i(L_T)\neq 0) >0$,  $\PP(F_i(L_T^{\varepsilon})\neq 0) >0$ for all $\varepsilon>0$ and there exists $ a > 1 $  such that $ \mathbb{E}\left[F_i^{2a}(L_{T})\right]$ and $\sup_{\varepsilon>0}\mathbb{E}\left[F_i^{2a}(L^\varepsilon_{T})\right]$ are finite.
Let $K$ be a compact set such that $K\subset \mathring\Theta_{2a/(a-1)}$ then the following assertions hold.
\begin{itemize}
 \item If the unique $\theta^{*}_{i}=\displaystyle \argmin_{\theta\in \Theta_{i,3}}v_{i}(\theta)$ satisfies ${\theta}^{*}_{i}\in K$  then the sequence $\theta_{i,n} \underset{n\rightarrow +\infty}{\longrightarrow} \theta^{*}_{i}$ $a.s.$
 
 \item  If the unique $\theta^{*}_{i,\varepsilon}=\displaystyle\argmin_{\theta\in \Theta_{i,3}^{\varepsilon}}v_{i,\varepsilon}(\theta)$ satisfies ${\theta}^{*}_{i,\varepsilon}\in K$ then the sequence $\theta_{i,\varepsilon,n} \underset{n\rightarrow +\infty}{\longrightarrow} \theta^{*}_{i,\varepsilon}$ $a.s.$
\end{itemize}
\end{theorem}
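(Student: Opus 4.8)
The plan is to recognise both recursions in \eqref{sequence_theta} as projected Robbins--Monro algorithms whose mean field is the gradient of the strictly convex function $v_i$ (resp.\ $v_{i,\varepsilon}$), and then to prove almost-sure convergence by a Robbins--Siegmund almost-supermartingale argument. I treat the two assertions together, since $(\theta_{i,n})$ and $(\theta_{i,\varepsilon,n})$ differ only in that $L_T$ is replaced by $L_T^{\varepsilon}$ and $v_i$ by $v_{i,\varepsilon}$; the only genuinely new point for the second assertion is to make every bound uniform in $\varepsilon$.

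First I would set $\mathcal F_n=\sigma(\theta_{i,0},L_{T,1},\dots,L_{T,n})$ (resp.\ with $L^{\varepsilon}$) and use the independence of $L_{T,n+1}$ from $\mathcal F_n$ together with $\nabla v_i(\theta)=\EE[H_i(\theta,L_T)]$ from Proposition \ref{convexity_v} to split $H_i(\theta_{i,n},L_{T,n+1})=\nabla v_i(\theta_{i,n})+\delta M_{n+1}$, where $(\delta M_n)$ is an $(\mathcal F_n)$-martingale increment. Writing $\theta^*:=\theta_i^*\in K$ and using that $\Pi_K$ is non-expansive and fixes $\theta^*$, I would expand $V_{n+1}:=|\theta_{i,n+1}-\theta^*|^2$ and take conditional expectations to reach $\EE[V_{n+1}\mid\mathcal F_n]\le V_n-2\gamma_{n+1}\langle\theta_{i,n}-\theta^*,\nabla v_i(\theta_{i,n})\rangle+C\gamma_{n+1}^2$, the last term coming from the uniform $L^2$ bound discussed below. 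Strict convexity of $v_i$ (Proposition \ref{convexity_v}) together with $\nabla v_i(\theta^*)=0$ (valid because $\theta^*\in K\subset\mathring\Theta_{2a/(a-1)}\subset\mathring\Theta_{i,3}$, so the constrained minimiser is an interior critical point) makes the scalar product nonnegative, vanishing only at $\theta^*$. The Robbins--Siegmund lemma then yields almost-sure convergence of $V_n$ and $\sum_n\gamma_{n+1}\langle\theta_{i,n}-\theta^*,\nabla v_i(\theta_{i,n})\rangle<\infty$; since $\sum_n\gamma_n=\infty$ and $\theta\mapsto\langle\theta-\theta^*,\nabla v_i(\theta)\rangle$ is continuous and strictly positive on $K\setminus\{\theta^*\}$, a subsequence of $(\theta_{i,n})$ converges to $\theta^*$, which forces $\lim_n V_n=0$, i.e.\ $\theta_{i,n}\to\theta^*$ a.s. For the $\varepsilon$-version the same computation applies verbatim with $v_{i,\varepsilon}$ and Proposition \ref{convexity_v_epsilon}.

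The main obstacle, and the only place where the specific structure of the problem enters, is the uniform second-moment control $\sup_{\theta\in K}\EE[|H_i(\theta,L_T)|^2]<\infty$ and, for the approximated process, $\sup_{\varepsilon>0}\sup_{\theta\in K}\EE[|H_i(\theta,L_T^{\varepsilon})|^2]<\infty$. From \eqref{gradiant_V} one has $|H_i(\theta,L_T)|^2\lesssim(1+|L_T|^2)F_i^2(L_T)e^{-2\theta.L_T+2T\kappa(\theta)}$, and I would apply H\"older's inequality with conjugate exponents $a$ and $a/(a-1)$, absorbing $F_i^2$ into $\EE[F_i^{2a}(L_T)]^{1/a}<\infty$ and leaving $\EE[(1+|L_T|^{2a/(a-1)})\,e^{-(2a/(a-1))\theta.L_T}]$. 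With $Q=2a/(a-1)$ this reduces to controlling $\EE[|L_T|^{Q}e^{-Q\theta.L_T}]$, which by Theorem \ref{TH-Sato} applied to the submultiplicative function $(|x|\vee1)^{Q}e^{-Q\theta.x}$ is finite exactly when $\int_{|z|>1}|z|^{Q}e^{-Q\theta.z}\nu(dz)<\infty$; this holds for $\theta\in\Theta_{Q}$ by the Remark following \eqref{theta_q}, and continuity in $\theta$ together with $K\subset\mathring\Theta_{Q}$ gives the uniform bound over the compact $K$. The boundedness of $\kappa$ and $\nabla\kappa$ on $K$ is immediate since both are continuous on $\Theta_1\supset K$. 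For the $\varepsilon$-version I would argue identically, replacing $\EE[F_i^{2a}(L_T)]$ by $\sup_{\varepsilon}\EE[F_i^{2a}(L_T^{\varepsilon})]<\infty$, invoking Lemma \ref{lem:ucv} (with $q=Q>1$) for the uniform-in-$\varepsilon$ bound on $\EE[|L_T^{\varepsilon}|^{Q}e^{-Q\theta.L_T^{\varepsilon}}]$, and using the uniform convergence $\kappa_\varepsilon\to\kappa$ from \eqref{eq-UC-kapa} (and its gradient analogue) to bound $\kappa_\varepsilon$ and $\nabla\kappa_\varepsilon$ on $K$ uniformly in $\varepsilon$. This uniform moment estimate supplies the constant $C$ in the Robbins--Siegmund recursion and closes the argument.
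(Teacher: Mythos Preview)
Your proposal is correct and follows essentially the same route as the paper. Both arguments verify the two standard conditions for a projected Robbins--Monro scheme: the mean-reverting property $\langle\theta-\theta^*,\nabla v_i(\theta)\rangle>0$ for $\theta\neq\theta^*$ (from strict convexity, Proposition~\ref{convexity_v}) and the second-moment bound $\sup_{\theta\in K}\EE[|H_i(\theta,L_T)|^2]<\infty$ via H\"older with exponents $a$ and $a/(a-1)$, reducing to finiteness of $\EE[|L_T|^{2a/(a-1)}e^{-2a/(a-1)\theta.L_T}]$ on $K\subset\Theta_{2a/(a-1)}$. The only difference is packaging: the paper invokes a ready-made convergence theorem for truncated Robbins--Monro (Theorem~A.1 in Laruelle--Lehalle--Pag\`es) and simply checks its hypotheses, whereas you unroll the Robbins--Siegmund supermartingale argument directly. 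Your version is more self-contained; theirs is shorter. One minor remark: the uniformity in $\varepsilon$ that you work to secure is not actually needed for the second assertion, which concerns a fixed $\varepsilon$; it suffices that the constant $C$ may depend on $\varepsilon$, and the hypothesis $\sup_{\varepsilon}\EE[F_i^{2a}(L_T^\varepsilon)]<\infty$ already covers this. The extra uniformity does no harm, of course, and is relevant later for Corollary~\ref{cor:cv:CRM}.
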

\begin{proof} Both items can be proved in the same way, so we choose to give the proof only for the first one.
According to Theorem A.1. in Laruelle, Lehalle and Pagès \cite{LaruellepagesLehalle} on truncated Robbins Monro algorithm (see also Kushner and Yin \cite{KushnerYin} for more details):
in order to prove that $\theta_{i,n}^{\varepsilon} \underset{n\rightarrow +\infty}{\longrightarrow} \theta^{*}_{i,\varepsilon}$ $a.s.$, we need to check firstly the mean-reverting property, namely
\begin{equation*} 
 \forall \theta \neq \theta_{i}^{*} \in K, \quad \langle \nabla v_{i}(\theta), \theta- \theta_{i}^{*} \rangle >0.
\end{equation*}
This is satisfied using $\nabla v_{i}({\theta}^{*}_{i} )=0$  and  the convexity of $v_{i}$ ensured by Proposition \ref{convexity_v}.
Secondly, we have to check the non explosion assumption given by
$$
\exists C>0 \text{ such that } \forall \theta\in K, \quad \EE\left[|H_i(\theta, L_{T})|^2\right]< C(1+|\theta|^2).
$$
In fact, using Hölder's inequality with the couple $a$ and $a/(a-1)$, we obtain
$$
 \EE | H_i(\theta, L_T)|^{2} 
\leq  \EE^{\frac{1}{a}} \left[  F_i^{2a}(L_T)  \right] \EE^{\frac{a-1}{a}} \left[ |T\nabla \kappa(\theta) - L_T|^{2a/(a-1)} e^{-2 a/(a-1)  \theta. L_T}\right] e^{ 2T \kappa(\theta)} 
$$
Since $\mathbb{E}\left[F_i^{2a}(L_{T})\right]$ is finite and $\theta \in K \subset \Theta_{2a/(a-1)}$, we deduce that 
$\sup_{\theta \in K } \EE | H_i(\theta, L_T)|^{2} < \infty$ which completes the proof.
\end{proof}
\begin{theorem}
\label{th:cv:eps}
Considering the sequences given by relation \eqref{sequence_theta},
for $i\in \{1,2\}$, we have for all $n\in\NN$ 
$$\theta_{i,\varepsilon,n} \underset{\varepsilon \rightarrow 0}{\longrightarrow}\theta_{i,n}\quad a.s.$$
\end{theorem}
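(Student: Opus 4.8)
The plan is to argue by induction on $n$, combining the joint continuity of $H_i$ with the uniform convergence of the cumulant generating functions on the compact $K$. First I fix the natural coupling of the two driving samples: for each $n$ the variables $L_{T,n}$ and $L_{T,n}^{\varepsilon}$ are built from one and the same underlying L\'evy path through the approximation error \eqref{error}, so that $L_{T,n}^{\varepsilon}=L_{T,n}-R^{\varepsilon}_{T,n}$. Since the L\'evy--It\^o decomposition gives $R^{\varepsilon}_{T,n}\to 0$ $a.s.$, we have $L_{T,n}^{\varepsilon}\to L_{T,n}$ $a.s.$ for every $n$. As both recursions in \eqref{sequence_theta} start from the same deterministic point $\theta_{i,0}\in K$, the base case $n=0$ is immediate.

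For the inductive step I assume $\theta_{i,\varepsilon,n}\to\theta_{i,n}$ $a.s.$ as $\varepsilon\searrow 0$. Because the Euclidean projection $\Pi_K$ onto the closed convex set $K$ is $1$-Lipschitz, hence continuous, and since the first summand in the bracket of \eqref{sequence_theta} already converges by the induction hypothesis, it suffices to prove the $a.s.$ convergence
\begin{equation*}
H_{i,\varepsilon}(\theta_{i,\varepsilon,n},L^{\varepsilon}_{T,n+1})\longrightarrow H_i(\theta_{i,n},L_{T,n+1}),\qquad a.s.,
\end{equation*}
where, in accordance with $\nabla v_{i,\varepsilon}(\theta)=\EE[H_i(\theta,L^{\varepsilon}_{T})]$ of Proposition \ref{convexity_v_epsilon}, I write $H_{i,\varepsilon}$ for the map \eqref{gradiant_V} in which $\kappa$ is replaced by the cumulant $\kappa_{\varepsilon}$ of $L^{\varepsilon}$. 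Note that $H_i(\theta,x)=(T\nabla\kappa(\theta)-x)F_i(x)e^{-\theta.x+T\kappa(\theta)}$ is jointly continuous in $(\theta,x)\in K\times\RR^d$: indeed $\kappa$ is of class $\mathscr C^2$ on $\mathring\Theta_1$ so that $\nabla\kappa$ is continuous, $F_i$ is continuous (for $i=1$ because $F$ is continuous and $F_1=F^2$, for $i=2$ because $F\in\mathscr C^1$ and $F_2=\nabla F.\Sigma\nabla F$), and the exponential factor is continuous.

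The hard part will be to absorb the passage from $\kappa_{\varepsilon}$ to $\kappa$, which is the only genuine point of the argument. Estimate \eqref{eq-UC-kapa} already gives $\kappa_{\varepsilon}\to\kappa$ uniformly on any compact subset of $\mathring\Theta_1$. Differentiating under the integral in $\kappa(\theta)-\kappa_{\varepsilon}(\theta)=\int_{|x|<\varepsilon}(e^{\theta.x}-1-\theta.x)\nu(dx)$ and using $|e^{\theta.x}-1|\leq|\theta|\,|x|\,e^{|\theta|}$ for $|x|<\varepsilon\leq 1$ yields
\begin{equation*}
\sup_{\theta\in K}\bigl|\nabla\kappa(\theta)-\nabla\kappa_{\varepsilon}(\theta)\bigr|\leq\Bigl(\sup_{\theta\in K}|\theta|e^{|\theta|}\Bigr)\sigma^2(\varepsilon)\xrightarrow[\varepsilon\searrow 0]{}0,
\end{equation*}
so $\nabla\kappa_{\varepsilon}\to\nabla\kappa$ uniformly on $K$ as well. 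Splitting the difference $H_{i,\varepsilon}(\theta_{i,\varepsilon,n},L^{\varepsilon}_{T,n+1})-H_i(\theta_{i,n},L_{T,n+1})$ into the part caused by replacing $\kappa_{\varepsilon}$ by $\kappa$ (controlled uniformly over $\theta\in K$ by the two uniform bounds above, since $\theta_{i,\varepsilon,n}\in K$) and the part coming from the $a.s.$ convergence of the arguments $(\theta_{i,\varepsilon,n},L^{\varepsilon}_{T,n+1})\to(\theta_{i,n},L_{T,n+1})$ (controlled by the joint continuity of $H_i$), both pieces tend to $0$ $a.s.$ Applying $\Pi_K$ then gives $\theta_{i,\varepsilon,n+1}\to\theta_{i,n+1}$ $a.s.$ and closes the induction.

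I expect the projection and continuous-mapping steps to be routine, while the only real work is the uniform control of $\kappa_{\varepsilon}$ and of its gradient near $K$, which is exactly where \eqref{eq-UC-kapa} and its differentiated analogue are used. In particular no integrability or uniform-integrability estimates are needed here, because the assertion is, for each fixed $n$, a pathwise almost sure limit rather than a statement about the whole trajectory of the algorithm.
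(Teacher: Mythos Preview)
Your proof is correct and follows the same induction-plus-continuity scheme as the paper: base case trivial, inductive step by continuity of $\Pi_K$ and of the update map, together with $L^{\varepsilon}_{T,n+1}\to L_{T,n+1}$ a.s.

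The one point on which you diverge is that you read the $\varepsilon$-recursion in \eqref{sequence_theta} as using $H_{i,\varepsilon}$ built from $\kappa_{\varepsilon}$, and therefore you supply uniform bounds for $\kappa_{\varepsilon}\to\kappa$ and $\nabla\kappa_{\varepsilon}\to\nabla\kappa$ on $K$. The paper, by contrast, takes the recursion literally: both sequences in \eqref{sequence_theta} use the \emph{same} deterministic function $H_i$ from \eqref{gradiant_V} (with the fixed $\kappa$), so the inductive step reduces immediately to the joint continuity of a single map $(\theta,x)\mapsto H_i(\theta,x)$ and no control of $\kappa_{\varepsilon}$ is needed. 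Your extra argument is harmless and in fact clarifies the tension with Proposition~\ref{convexity_v_epsilon} (where the natural gradient involves $\kappa_{\varepsilon}$), but under the paper's reading it is superfluous.
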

\begin{proof}
We proceed by induction. The base case is trivial and for the inductive step  we suppose that for $i\in\{1,2\}$,   $n\in\NN$, $\theta_{i,\varepsilon,n}$ converges to $\theta_{i,n}$ $a.s.$ as $\varepsilon$ goes to $0$
and we prove  the statement for $n+1$. We have  $\theta_{i,\varepsilon,n+1} = \Pi_{K} \left[\theta_{i,\varepsilon,n}-\gamma_{i+1} H_i(\theta_{i,\varepsilon,n}, L_{T,n+1}^{\varepsilon}) \right]$. 
By the continuity of the function $H_i$ given by (\ref{gradiant_V}), the almost sure convergence of $L_{T,n+1}^{\varepsilon}$ to $L_{T,n+1}$ and the continuity of the projection function $\Pi_{K}$, we deduce that $\theta_{i,\varepsilon,n+1}$ converges to $\theta_{i,n+1}$ $a.s.$ as $\varepsilon$ goes to $0$.
\end{proof}
The following corollary follows immediately  thanks to theorems  \ref{th:convergence},  \ref{th:cv:CRM} and  \ref{th:cv:eps}.
\begin{corollary}
\label{cor:cv:CRM}
Under assumptions of Theorem \ref{th:cv:CRM}, 
  the constrained  algorithm given  by routine \eqref{sequence_theta} satisfies for $i\in\{1,2\}$
 \begin{equation}
 \label{double_cv}
 \lim\limits_{\substack{\varepsilon \to 0 \\n \to \infty }}\theta_{i,\varepsilon,n} =\lim\limits_{\varepsilon\to 0}(\lim\limits_{n \to \infty}\theta_{i,\varepsilon,n})= \lim\limits_{n \to \infty}(\lim\limits_{\varepsilon\to 0}\theta_{i,\varepsilon,n})=\theta^*_i, \quad \mbox{$ \PP$-$a.s.$}
\end{equation}
\end{corollary}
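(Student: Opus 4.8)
The plan is to reduce the whole statement to the three convergence results already proved --- Theorem~\ref{th:convergence}, Theorem~\ref{th:cv:CRM} and Theorem~\ref{th:cv:eps} --- and then to supply the single extra ingredient, a uniformity, needed to upgrade the two iterated limits into the genuine double limit; all the convergences below are understood to hold on a common event of full probability, obtained by intersecting the countably many $\PP$-almost sure events produced by the cited theorems. First I would dispose of the two iterated limits. Taking $n\to\infty$ first: for fixed $\varepsilon>0$ the second item of Theorem~\ref{th:cv:CRM} gives $\theta_{i,\varepsilon,n}\to\theta^*_{i,\varepsilon}$ $\PP$-a.s.\ as $n\to\infty$, and Theorem~\ref{th:convergence} then gives $\theta^*_{i,\varepsilon}\to\theta^*_i$ as $\varepsilon\searrow 0$, so $\lim_{\varepsilon\to 0}(\lim_{n\to\infty}\theta_{i,\varepsilon,n})=\theta^*_i$. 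Taking $\varepsilon\to 0$ first: for fixed $n$ Theorem~\ref{th:cv:eps} gives $\theta_{i,\varepsilon,n}\to\theta_{i,n}$ $\PP$-a.s., and the first item of Theorem~\ref{th:cv:CRM} gives $\theta_{i,n}\to\theta^*_i$ $\PP$-a.s., so $\lim_{n\to\infty}(\lim_{\varepsilon\to 0}\theta_{i,\varepsilon,n})=\theta^*_i$. Thus both iterated limits equal $\theta^*_i$.

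The remaining and only delicate point is the full double limit, since coincidence of the two iterated limits does not by itself force it to exist. I would establish it by a Moore--Osgood type argument, for which one uniform convergence suffices. Writing $|\theta_{i,\varepsilon,n}-\theta^*_i|\le |\theta_{i,\varepsilon,n}-\theta_{i,n}|+|\theta_{i,n}-\theta^*_i|$ and noting that the second term is controlled uniformly in $\varepsilon$ by Theorem~\ref{th:cv:CRM}, everything reduces to proving that $\sup_{n}|\theta_{i,\varepsilon,n}-\theta_{i,n}|\to 0$ $\PP$-a.s.\ as $\varepsilon\searrow 0$. To this end I would couple the two recursions in \eqref{sequence_theta}: they share the gains $(\gamma_n)$ and are driven by $L^\varepsilon_{T,n+1}$ and $L_{T,n+1}$ with $L^\varepsilon_{T,n+1}\to L_{T,n+1}$ $\PP$-a.s. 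Since $\Pi_K$ is $1$-Lipschitz one gets
\begin{equation*}
|\theta_{i,\varepsilon,n+1}-\theta_{i,n+1}|\le |\theta_{i,\varepsilon,n}-\theta_{i,n}| + \gamma_{n+1}\,\bigl|H_i(\theta_{i,\varepsilon,n},L^\varepsilon_{T,n+1})-H_i(\theta_{i,n},L_{T,n+1})\bigr|,
\end{equation*}
and the last increment splits into a term Lipschitz in $\theta$, bounded by a random constant times $|\theta_{i,\varepsilon,n}-\theta_{i,n}|$, plus a pure noise term that tends to zero with $\varepsilon$.

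The hard part will be precisely to turn this one-step estimate into a bound uniform in $n$. A naive discrete Gronwall produces a product $\prod_k(1+\gamma_k C_k)$ whose random per-step Lipschitz constants $C_k=C_k(L^\varepsilon_{T,k})$ need not be summable, so the errors could in principle accumulate as $n\to\infty$; the control must therefore be obtained more carefully, localizing on events on which the iterates remain in $K$ and the driving noise stays bounded, and exploiting $\sum_n\gamma_n^2<\infty$ together with the continuity of $H_i$ on the compact $K$. Once $\sup_n|\theta_{i,\varepsilon,n}-\theta_{i,n}|\to 0$ is secured, the double limit follows at once: given $\delta>0$, choose $\varepsilon_0$ so that this supremum is below $\delta/2$ for $\varepsilon<\varepsilon_0$ and $N$ so that $|\theta_{i,n}-\theta^*_i|<\delta/2$ for $n>N$, whence $|\theta_{i,\varepsilon,n}-\theta^*_i|<\delta$ for every such pair, and the common value $\theta^*_i$ of the two iterated limits is therefore also the value of the double limit in \eqref{double_cv}.
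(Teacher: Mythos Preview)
Your derivation of the two iterated limits is exactly what the paper does: the authors simply say the corollary ``follows immediately thanks to theorems~\ref{th:convergence}, \ref{th:cv:CRM} and \ref{th:cv:eps}'', i.e.\ they compose $\theta_{i,\varepsilon,n}\to\theta^*_{i,\varepsilon}\to\theta^*_i$ and $\theta_{i,\varepsilon,n}\to\theta_{i,n}\to\theta^*_i$ and stop there. So for that part you are aligned with the paper, only more explicit.

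Where you diverge is in worrying about the genuine double limit. You are right that equality of the two iterated limits does not by itself force the joint limit, and the paper does not justify this step; it is silently absorbed into the phrase ``follows immediately''. Your Moore--Osgood strategy is the natural fix. However, your proposal does not actually carry it out: you write down the one-step contraction inequality, observe that a naive discrete Gronwall fails because the random Lipschitz constants $C_k$ are not summable against $\gamma_k$ (only $\sum\gamma_k^2<\infty$ is available), and then defer to ``localizing on events'' and ``exploiting $\sum_n\gamma_n^2<\infty$'' without saying how. That is precisely the hard part, and as written it is a gap: nothing in the recursion prevents the accumulated error $|\theta_{i,\varepsilon,n}-\theta_{i,n}|$ from growing with $n$ before $\varepsilon$ shrinks, and the compactness of $K$ only bounds each difference by $\mathrm{diam}(K)$, which gives no decay.

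In short: your iterated-limit argument is correct and is the paper's argument; your treatment of the double limit is more scrupulous than the paper's but remains a sketch, since the promised uniformity $\sup_n|\theta_{i,\varepsilon,n}-\theta_{i,n}|\to 0$ is asserted rather than proved. If you want to close the argument cleanly, one option is to prove uniformity in the other direction (uniform in $\varepsilon$ of the convergence $\theta_{i,\varepsilon,n}\to\theta^*_{i,\varepsilon}$), which is more accessible via the Robbins--Monro machinery because the non-explosion bound $\sup_{\theta\in K}\EE|H_i(\theta,L^\varepsilon_T)|^2$ in Theorem~\ref{th:cv:CRM} is already shown to be uniform in $\varepsilon$.
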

\paragraph{Remark.}
Suppose for a while that we omit assumptions ${\theta}^{*}_{i}\in K$ and 
${\theta}^{*}_{i,\varepsilon}\in K $ in Theorem \ref{th:cv:CRM} above. 
According to Theorem 3.2. of Kawai \cite{Kawai} based on Theorem 2.1 of Kushner and Yin \cite{KushnerYin}
there exist $\bar \theta_{i}$ and $\bar \theta_{i,\varepsilon}$ in $K$ such that  $\theta_{i,n} \underset{n\rightarrow +\infty}{\longrightarrow} \bar \theta_{i}$ $a.s.$ and 
$\theta_{i,\varepsilon,n} \underset{n\rightarrow +\infty}{\longrightarrow} \bar \theta_{i,\varepsilon}$ $a.s.$
Moreover, $ v_{i}(\bar {\theta}_{i} )\leq v_{i} (\theta)$ and $ v_{i,\varepsilon}(\bar {\theta}_{i,\varepsilon} )
\leq v_{i,\varepsilon}(\theta )$ for all $\theta \in K$. In this case we can prove that the constrained  algorithm 
given  by routine \eqref{sequence_theta} satisfies   relation \eqref{double_cv} with $\bar {\theta}_{i,\varepsilon}$ instead of $\theta^*_i$.
\subsection{Central limit theorems}
In what follows, we consider the filtration  
$ {\mathcal{F}}_{T,k}=\sigma(L_{t,\ell}, L^{\varepsilon}_{t,\ell},0<\varepsilon<1,t \leq T,\ell \leq k)$,
where $(L_{\ell}, L^{\varepsilon}_{\ell})_{\ell\geq 1}$ are independent copies of $(L, L^{\varepsilon})$. Let us assume that 
there exists a family of  sequences $(\theta^\varepsilon_k)_{k\geq 0,0 < \varepsilon \leq 1 }$ and $(\theta_k)_{k\geq 0}$ satisfying
$$
(\mathcal H_{\theta})\quad
\left\{
\begin{array}{l}
\text{For each $\varepsilon>0$, $(\theta^\varepsilon_k)_{k\geq 0}$ and $(\theta_k)_{k\geq 0}$ are $({\mathcal{F}}_{T,k})_{k\geq 0}$-adapted}\\\\
\lim\limits_{k \to \infty}(\lim\limits_{\varepsilon \to 0}\theta_{k}^{\varepsilon})= \lim\limits_{k \to \infty} \theta_k=\lim\limits_{\varepsilon \to 0}(\lim\limits_{k \to \infty}\theta^{\varepsilon}_{k})= \lim\limits_{\varepsilon \to 0}\theta^*_\varepsilon=\theta^*, \quad \mbox{$\PP$-$a.s.$},
\end{array}
\right.
$$
with deterministic limits $\theta^*$ and $\theta_\varepsilon^*$.

At first, we start with studying the MC setting.  We use the adaptive importance sampling algorithm for the MC method to approximate our initial quantity of interest $\EE F(L_T)$ by 
\begin{equation}
\label{eq:ISMC}
 Q_{\varepsilon}^{\rm{ISMC}}=\frac{1}{N} \sum_{k=1}^{N}F(L_{T,k}^{\varepsilon, \theta_{k-1}^{\varepsilon}}) 
 e^{-\theta_{k-1}^{\varepsilon}. L_{T,k}^{\varepsilon,\theta_{k-1}^{\varepsilon}} + T \kappa_\varepsilon(\theta_{k-1}^{\varepsilon})}.
\end{equation}
Our task now is to establish a central limit theorem for the adaptive importance sampling Monte Carlo method (ISMC).
\begin{theorem}
\label{CLT adaptative Monte Carlo}
 Let $F:\RR^d\rightarrow \RR$ be a continuous function  satisfying assumption \eqref{weak_error} and such that $\sup_{0 < \varepsilon \leq 1}  \EE\left[ F^{2a} (L_{T}^{\varepsilon})\right] < +\infty$  for  $a>1$.
Moreover, assume that $\mbox{ Leb}(\Theta_q)>0 $ with $q>a/(a-1)$ and there exists a double indexed family 
$(\theta_{k}^{\varepsilon})_{k\in\NN,\varepsilon>0}$ satisfying $(\mathcal H_{\theta})$ and 
belonging to some compact subset  $K\subset \mathring \Theta_q$.  
Then, if we choose $N= \upsilon_\varepsilon^{-2} $, the following convergence holds
\begin{equation}
 \upsilon_\varepsilon^{-1} \left(Q_{\varepsilon}^{\rm{ISMC}} - \EE F(L_T) \right) \overset{\mathcal{L}}{\longrightarrow} \mathcal{N}(C_F,\sigma^2),\quad\mbox{ as } \varepsilon \searrow 0,
\end{equation}
where $\sigma^2:=\EE\left[F^2(L_{T}) 
 e^{-\theta^*.L_{T}^{\varepsilon} + T \kappa(\theta^*)}\right]-\left(\EE[F(L_{T})]\right)^2$.
\end{theorem}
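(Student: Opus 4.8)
The plan is to separate the deterministic bias from the statistical fluctuation and to recognise the fluctuation as a normalised sum of martingale increments, to which the Lindeberg--Feller theorem for triangular arrays (Theorem~\ref{CLT Lindeberg Feller}) applies. First I would split
\begin{equation*}
Q_{\varepsilon}^{\rm{ISMC}} - \EE F(L_T) = \bigl(Q_{\varepsilon}^{\rm{ISMC}} - \EE F(L_T^{\varepsilon})\bigr) + \bigl(\EE F(L_T^{\varepsilon}) - \EE F(L_T)\bigr),
\end{equation*}
so that, exactly as in the proof of Theorem~\ref{CLT Monte Carlo}, assumption \eqref{weak_error} makes the rescaled second term contribute the limiting mean $C_F$. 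For the first term I set $X_{k,\varepsilon}:=\frac{\veps^{-1}}{N}\bigl(F(L_{T,k}^{\varepsilon,\theta_{k-1}^{\varepsilon}})e^{-\theta_{k-1}^{\varepsilon}.L_{T,k}^{\varepsilon,\theta_{k-1}^{\varepsilon}}+T\kappa_{\varepsilon}(\theta_{k-1}^{\varepsilon})}-\EE F(L_T^{\varepsilon})\bigr)$, so that $\veps^{-1}(Q_{\varepsilon}^{\rm{ISMC}}-\EE F(L_T^{\varepsilon}))=\sum_{k=1}^{N}X_{k,\varepsilon}$. The crucial structural point is that $\theta_{k-1}^{\varepsilon}$ is $\mathcal F_{T,k-1}$-measurable by $(\mathcal H_{\theta})$ while the $k$-th draw is independent of the past; hence the Esscher (importance sampling) unbiasedness identity $\EE[F(L_T^{\varepsilon,\theta})e^{-\theta.L_T^{\varepsilon,\theta}+T\kappa_{\varepsilon}(\theta)}]=\EE F(L_T^{\varepsilon})$, evaluated at $\theta=\theta_{k-1}^{\varepsilon}$, gives $\EE[X_{k,\varepsilon}\mid\mathcal F_{T,k-1}]=0$. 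Thus $(X_{k,\varepsilon})_k$ is a martingale-increment array, and it remains to verify the conditional-variance condition (A1) and the Lyapunov condition (A3) of Theorem~\ref{CLT Lindeberg Feller}.

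\textbf{Step 1 (conditional variance).} Conditioning again and applying the Esscher transform to $F^2$ yields $\EE[(N\veps X_{k,\varepsilon}+\EE F(L_T^{\varepsilon}))^2\mid\mathcal F_{T,k-1}]=v_{1,\varepsilon}(\theta_{k-1}^{\varepsilon})$, where $v_{1,\varepsilon}$ is the function of Proposition~\ref{convexity_v_epsilon} associated through \eqref{var_theta} to $F_1\equiv F^2$. Since $N=\veps^{-2}$, the bracket becomes $\sum_{k=1}^{N}\EE[X_{k,\varepsilon}^2\mid\mathcal F_{T,k-1}]=\frac1N\sum_{k=1}^{N}\bigl(v_{1,\varepsilon}(\theta_{k-1}^{\varepsilon})-(\EE F(L_T^{\varepsilon}))^2\bigr)$, and I must show this converges in probability to $\sigma^2=v_1(\theta^*)-(\EE F(L_T))^2$, which is precisely the stated limit variance. \emph{This is the main obstacle.} I would handle it by writing $v_{1,\varepsilon}(\theta_{k-1}^{\varepsilon})-v_1(\theta^*)=\bigl(v_{1,\varepsilon}(\theta_{k-1}^{\varepsilon})-v_1(\theta_{k-1}^{\varepsilon})\bigr)+\bigl(v_1(\theta_{k-1}^{\varepsilon})-v_1(\theta^*)\bigr)$. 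The first difference is dominated by $\sup_{\theta\in K}|v_{1,\varepsilon}(\theta)-v_1(\theta)|\to 0$; this uniform convergence follows from the uniform convergence $\kappa_{\varepsilon}\to\kappa$ on compacts given by \eqref{eq-UC-kapa}, the a.s.\ convergence $L_T^{\varepsilon}\to L_T$ and continuity of $F$, together with a uniform-integrability bound obtained by the Hölder argument of Lemma~\ref{lem:ucv} using $\sup_{\varepsilon}\EE F^{2a}(L_T^{\varepsilon})<\infty$ and $K\subset\mathring\Theta_q$. For the Cesàro average of the second difference I would invoke the a.s.\ double convergence $\theta_{k-1}^{\varepsilon}\to\theta^*$ from $(\mathcal H_{\theta})$ (cf.\ Corollary~\ref{cor:cv:CRM}) and uniform continuity of the $\mathscr C^2$ function $v_1$ (Proposition~\ref{convexity_v}) on the compact $K$: a Toeplitz/Cesàro splitting into the first $k_0$ terms (whose contribution is $O(k_0/N)\to0$) and the tail (controlled by the modulus of continuity of $v_1$ at $\theta^*$) gives the claim.

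\textbf{Step 2 (Lyapunov).} For $1<\tilde a<a$ I would bound $\sum_{k=1}^{N}\EE|X_{k,\varepsilon}|^{2\tilde a}=\frac{\veps^{-2\tilde a}}{N^{2\tilde a}}\sum_{k=1}^{N}\EE|N\veps X_{k,\varepsilon}|^{2\tilde a}$. Conditioning and Esscher-transforming as in Step~1 reduces the conditional $2\tilde a$-th moment to $\EE\bigl[|F(L_T^{\varepsilon})|^{2\tilde a}e^{-(2\tilde a-1)\theta.L_T^{\varepsilon}+(2\tilde a-1)T\kappa_{\varepsilon}(\theta)}\bigr]$ at $\theta=\theta_{k-1}^{\varepsilon}$; Hölder with exponents $a/\tilde a$ and $a/(a-\tilde a)$ separates $\EE|F(L_T^{\varepsilon})|^{2a}$, bounded by hypothesis, from $\exp\bigl(T\kappa_{\varepsilon}(-(2\tilde a-1)\tfrac{a}{a-\tilde a}\theta)\bigr)$. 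Because $(2\tilde a-1)\tfrac{a}{a-\tilde a}\to\tfrac{a}{a-1}<q$ as $\tilde a\downarrow1$, choosing $\tilde a$ close enough to $1$ keeps the argument within $q$-scaling of $K\subset\mathring\Theta_q$, whence $\kappa_{\varepsilon}$ is finite and uniformly bounded there by \eqref{eq-UC-kapa}. This produces a constant $C$, independent of $k$ and $\varepsilon$, with $\EE|N\veps X_{k,\varepsilon}|^{2\tilde a}\le C$, so that $\sum_{k=1}^{N}\EE|X_{k,\varepsilon}|^{2\tilde a}\le C\,\veps^{-2\tilde a}N^{1-2\tilde a}=C\,\veps^{2(\tilde a-1)}\to0$. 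With (A1) and (A3) verified, Theorem~\ref{CLT Lindeberg Feller} yields asymptotic normality of $\sum_k X_{k,\varepsilon}$ with variance $\sigma^2$, and combining with the bias term completes the proof that $\veps^{-1}(Q_{\varepsilon}^{\rm{ISMC}}-\EE F(L_T))\Rightarrow\mathcal N(C_F,\sigma^2)$.
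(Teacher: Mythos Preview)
Your approach is essentially the same as the paper's: separate the bias via \eqref{weak_error}, recognise the fluctuation as a sum of martingale increments, and verify the bracket convergence and a Lyapunov condition. One correction is needed: the increments $X_{k,\varepsilon}$ are \emph{not} independent (each $\theta_{k-1}^{\varepsilon}$ depends on previous draws), so you must invoke the martingale-array CLT, Theorem~\ref{lindeberg martingales}, with conditions B1 and B3, rather than Theorem~\ref{CLT Lindeberg Feller} whose A1 and A3 concern unconditional moments of independent summands. You clearly know this---you compute conditional variances and stress $\mathcal F_{T,k-1}$-measurability---so it is only a mislabelling, but as stated the cited theorem does not apply.

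Two minor organisational differences from the paper. In Step~1 the paper factors $v_{1,\varepsilon}(\theta)=\gamma_{\varepsilon}(\theta)e^{T\kappa_{\varepsilon}(\theta)}$, proves a uniform Lipschitz bound $|\gamma_{\varepsilon}(\theta)-\gamma_{\varepsilon}(\theta')|\le c_2|\theta-\theta'|$ on $K$ (via Lemma~\ref{lem:ucv}), and then applies the double-indexed Toeplitz Lemma~\ref{toeplitz} directly; your decomposition into $\sup_{K}|v_{1,\varepsilon}-v_1|\to0$ plus continuity of $v_1$ at $\theta^*$ amounts to the same ingredients. In Step~2 the paper takes the explicit value $\tilde a=\frac{a(q+1)}{2a+q}$, which makes $(2\tilde a-1)\frac{a}{a-\tilde a}=q$ exactly and lands the H\"older factor in $\Theta_q$; your limiting choice $\tilde a\downarrow 1$ so that $(2\tilde a-1)\frac{a}{a-\tilde a}<q$ is an equally valid route.
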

\begin{proof}
By assumption \eqref{weak_error} we only need to study the asymptotic behavior of the martingale arrays $(M_{k}^{\varepsilon})_{k \geq 1}$ given by
$
 M_{k}^{\varepsilon}:= \upsilon_\varepsilon \sum_{i=1}^{k}\left( F(L_{T,i}^{\varepsilon, \theta_{i-1}^{\varepsilon}}) 
 e^{-\theta_{i-1}^{\varepsilon}. L_{T,i}^{\varepsilon,\theta_{i-1}^{\varepsilon}} + T \kappa_\varepsilon(\theta_{i-1}^{\varepsilon})} -\EE F(L_{T}^{\varepsilon})\right).
$
To do so, we plan to apply the Lindeberg-Feller central limit theorem for martingales arrays (see Theorem \ref{lindeberg martingales} in the Appendix section). The proof is divided into two steps.
\paragraph{Step 1.}
The quadratic variation of the martingale arrays $(M_{k}^{\varepsilon})_{k \geq 1}$ is given by 
\begin{equation}
\langle M^{\varepsilon}\rangle_{N}  = \frac{1}{N} \sum_{k=1}^{N} \EE \bigl[ F^2(L_{T,k}^{\varepsilon, \theta_{k-1}^{\varepsilon}}) 
 e^{-2\theta_{k-1}^{\varepsilon}. L_{T,k}^{\varepsilon,\theta_{k-1}^{\varepsilon}} + 2T \kappa_\varepsilon(\theta_{k-1}^{\varepsilon})}
  | \mathcal{ F}_{T,k-1} \bigr] - \left(\EE F(L_{T}^{\varepsilon}) \right)^{2}.
\end{equation}
Since $\theta_{k-1}^{\varepsilon}$ is $\mathcal{F}_{T,k-1}$-measurable and 
$(L_{T,k}^{\varepsilon, \theta})_{\theta\in \Theta_q} \independent\mathcal F_{T,k-1}$, by Esscher transform we obtain
\begin{equation*}
 \langle M^{\varepsilon}\rangle_{N}  = \frac{1}{N} \sum_{k=1}^{N} \gamma_{\varepsilon}(\theta_{k-1}^{\varepsilon})
 e^{T\kappa_\varepsilon(\theta_{k-1}^{\varepsilon})} - \left(\EE F(L_{T}^{\varepsilon}) \right)^{2},
\end{equation*}
where for all $\theta \in \Theta_q$, $ \gamma_{\varepsilon}(\theta) 
= \EE\left[  F^{2}(L_{T}^{\varepsilon}) e^{-\theta.L_{T}^{\varepsilon} } \right]$.
On the one hand, using assumption \eqref{weak_error}, we have $\lim_{\varepsilon\to 0}\EE F(L_{T}^{\varepsilon})=\EE F(L_{T})$. On the other hand,
thanks to relation \eqref{eq-UC-kapa} we have the uniform equicontinuity of the family $(\kappa_\varepsilon)_{\varepsilon>0}$ 
on the compact subset $K$.
So, we only need  to check this last property for the family $(\gamma_\varepsilon)_{\varepsilon>0}$ in view to use after that Lemma \ref{toeplitz} and then deduce  the convergence of $\langle M^{\varepsilon}\rangle_{N}$ toward $\gamma(\theta^*)-(\EE F(L_{T}))^2$ as $\varepsilon \searrow 0$, 
where $ \gamma(\theta) := \EE\left[  F^{2}(L_{T}) e^{-\theta.L_{T} } \right]$. 

Thus, it remains to prove the uniform equicontinuity of the family functions $(\gamma_{\varepsilon})_{\varepsilon >0}$ defined on the compact set $K$. Using H\"older's inequality and the assumption $\sup_{\varepsilon >0} \EE \left[ F^{2 a}(L_T^{\varepsilon})\right]<+\infty$, there exists  $c_1>0$ not depending on $\varepsilon$ such that 
\begin{eqnarray*}
 \left| \gamma_{\varepsilon}(\theta)-\gamma_{\varepsilon}(\theta')\right|   
 &\leq&  \EE \left[ F^{2}(L_T^{\varepsilon})  \bigl| e^{-\theta.L_T^{\varepsilon}} - e^{-\theta'.L_T^{\varepsilon}}  \bigr|\right] \\
   &\leq& c_1 \EE^{1/q} \left[   \bigl| e^{-\theta.L_T^{\varepsilon}} - e^{-\theta'.L_T^{\varepsilon}}  \bigr|^q\right].
\end{eqnarray*} 
By Taylor's expansion and standard calculations we easily get 
$$
|e^{- \theta.L_T^{\varepsilon}} - e^{- \theta'.L_T^{\varepsilon}}|^{q} \leq|\theta-\theta'|^{q}
\int_{0}^{1}
|L_T^{\varepsilon}|^{q} e^{-q(u\theta+ (1-u) \theta').L_T^{\varepsilon}}du.
$$
Therefore, we have
$$
\left| \gamma_{\varepsilon}(\theta)-\gamma_{\varepsilon}(\theta')\right| \leq c_1 |\theta-\theta'| \sup_{\theta \in \Theta_q} 
\EE^{1/q}\bigl[ |L_T^{\varepsilon}|^{q} e^{-q\theta.L_T^{\varepsilon}}\bigr].
$$
Hence, according to Lemma \ref{lem:ucv} there exists  a constant $c_2>0$ also not depending on and  $\varepsilon$ such that
\begin{equation}
\label{eq-UC-gamma}
\left| \gamma_{\varepsilon}(\theta)-\gamma_{\varepsilon}(\theta')\right| \leq c_2 |\theta-\theta'|.
\end{equation}
This completes the proof of the first step. 
\paragraph{Step 2.}
We check now the Lyapunov condition given by assumption {\it {B3}} in Theorem \ref{lindeberg martingales}.
So, let $\tilde a =\frac{aq+a}{2a+q}$, it is easy to check that $1<\tilde a <a$.  Once again using the mesurability properties of the family $(L_{T,k}^{\varepsilon, \theta})_{\theta\in \Theta_q}$ and the sequence $(\theta_{k}^{\varepsilon})_{k\geq 0}$, we get using the Esscher transform 
\begin{eqnarray*}
\sum_{k=1}^{N} 
\EE \left[ \left| M_{k}^{\varepsilon} - M_{k-1}^{\varepsilon} \right|^{2\tilde a} | \mathcal{F}_{T,k-1}\right] &=& \frac{1}{N^{\tilde a}} \sum_{k=1}^{N} 
\EE \Bigl[ \bigl|F(L_{T,k}^{\varepsilon, \theta_{k-1}^{\varepsilon}}) 
 e^{-\theta_{k-1}^{\varepsilon}. L_{T,k}^{\varepsilon,\theta_{k-1}^{\varepsilon}} + T \kappa_\varepsilon(\theta_{k-1}^{\varepsilon})}  -\EE F(L_{T}^{\varepsilon})  
\bigr|^{2\tilde a} |\mathcal{F}_{T,k-1}\Bigr]\\
  &\leq& \frac{2^{2\tilde a-1}}{N^{\tilde a}} \sum_{k=1}^{N} \gamma_{\tilde a,\varepsilon}(\theta_{k-1}^{\varepsilon})
  e^{  (2\tilde a-1) T \kappa_\varepsilon(\theta_{k-1}^{\varepsilon})} +\frac{2^{2\tilde a-1}}{N^{\tilde a}} \bigl|\EE F(L_{T}^{\varepsilon}) \bigr|^{2\tilde a} 
 \end{eqnarray*}
 where for all $\theta \in \Theta_q$, $ \gamma_{\tilde a,\varepsilon}(\theta) 
= \EE\left[  F^{2\tilde a}(L_{T}^{\varepsilon}) e^{-(2\tilde a-1)\theta.L_{T}^{\varepsilon} } \right]$. Then, by Hölder's inequality we get
$$
\gamma_{\tilde a,\varepsilon}(\theta)\leq  \EE^{\tilde a/a}\bigl[  F^{2 a}(L_{T}^{\varepsilon}) \bigr]\EE^{(a-\tilde a)/a}\bigl[   e^{-(2\tilde a-1)a/(a-\tilde a)\theta.L_{T}^{\varepsilon} } \bigr].
$$
Noticing that $q=(2\tilde a-1)a/(a-\tilde a)$, it results from assumption $\sup_{0 < \varepsilon \leq 1}  \EE\left[ F^{2a} (L_{T}^{\varepsilon})\right] < +\infty$ that 
 $ \gamma_{\tilde a,\varepsilon}$
is uniformly bounded on the compact subset $K\subset\Theta_q$. Moreover, using once again relation \eqref{eq-UC-kapa} we deduce the
uniform boundedness of the family $(\kappa_\varepsilon)_{\varepsilon>0}$ 
on the compact subset $K$. Hence, combining all these results together with assumption \eqref{weak_error}, we deduce the existence of $c_3>0$ 
not depending on $\varepsilon$ such that 
$
\sum_{k=1}^{N} \EE \left[ \left| M_{k}^{\varepsilon} - M_{k-1}^{\varepsilon} \right|^{2\tilde a} | \mathcal{F}_{T,k-1}\right]\leq \frac{c_3}{N^{\tilde a -1}}.
$
This completes the proof.
\end{proof}
\paragraph{Remark.}  If one have in mind to reduce the variance by using an  adaptive crude Monte Carlo method, it appears clear that the natural choice is
$$
\theta^{*}_{1}=\displaystyle \argmin_{\theta\in \Theta_{1,3}}v_{1}(\theta)\quad \text{and}\quad 
\theta^{*}_{1,\varepsilon}=\displaystyle\argmin_{\theta\in \Theta_{1,3}^{\varepsilon}}v_{1,\varepsilon}(\theta)\;\text{for}\; \varepsilon>0,
$$
where $v_1$ and  $v_{1,\varepsilon}$ are presented in section \ref{sec:IS}.
The construction of stochastic sequences converging almost surely to these desired targets and  satisfying $(\mathcal H_{\theta})$ is ensured by Corollary \ref{cor:cv:CRM}.

Now, we use the adaptive importance sampling statistical Romberg method (ISSR) to approximate our initial quantity of interest $\EE F(L_T)$ by 
\begin{multline}
\label{eq:ISSR}
 Q_{\varepsilon}^{\rm{ISSR}}:=\frac{1}{N_1} \sum_{k=1}^{N_1} F(L_{T,k}^{\varepsilon^{\beta}, \theta_{1,k-1}^{\varepsilon^{\beta}}}) e^{-\theta_{1,k-1}^{\varepsilon^{\beta}}.L_{T,k}^{\varepsilon^{\beta}, \theta_{1,k-1}^{\varepsilon^{\beta}}} + T \kappa_{\varepsilon^{\beta}}(\theta_{1,k-1}^{\varepsilon^{\beta}})}\\ +\frac{1}{N_2} \sum_{k=1}^{N_2} \left(F(L_{T,k}^{\varepsilon, \theta_{2,k-1}^{\varepsilon}}) - F(L_{T,k}^{\varepsilon^{\beta}, \theta_{2,k-1}^{\varepsilon}}) \right) e^{-\theta_{2,k-1}^{\varepsilon}.L_{T,k}^{\varepsilon, \theta_{2,k-1}^{\varepsilon}} + T \kappa_{\varepsilon}(\theta_{2,k-1}^{\varepsilon})}
\end{multline}
Our second result is a central limit theorem for the adaptive ISSR method
\begin{theorem}
\label{CLT:ISSR}
 Let $F:\RR^d\rightarrow \RR$ be a $\mathscr {C}^1$ function  satisfying assumption \eqref{weak_error}  and such that ${\sup_{0 < \varepsilon \leq 1} } \EE F^{2a} (L_{T}^{\varepsilon})$ and ${\sup_{0 < \varepsilon \leq 1} }\EE \left| \sigma^{-1}(\varepsilon) (F(L_{T}^{\varepsilon}) - F(L_{T}))\right|^{2a}$ are finite, for  $a>1$. Suppose also that the following assumptions are satisfied.  
\begin{itemize}
 \item [$\it H1.$] Condition (\ref{condition_convergence1}) in Theorem \ref{Cohen_Rosinski} holds and there exists a definite positive  matrix $\Sigma$  such that $\lim\limits_{\varepsilon \to 0}\sigma^{-2}(\varepsilon)\Sigma_{\varepsilon}=\Sigma$.
 \item [$\it H2.$] For $0<\beta <1$, we have $\lim\limits_{\varepsilon \to 0}\sigma(\varepsilon)\sigma^{-1}(\varepsilon^\beta)=0$ and  
 $\lim\limits_{\varepsilon \to 0}\veps\sigma^{-1}(\varepsilon^\beta)=0$.
\end{itemize}
Moreover, assume that $\mbox{ Leb}(\Theta_q)>0 $ with $q>a/(a-1)$ and for $i\in\{1,2\}$ there exists a double indexed family 
$(\theta_{i,k}^{\varepsilon})_{k\in\NN,\varepsilon>0}$  satisfying $(\mathcal H_{\theta})$ and 
belonging to some compact subset  $K_i\subset \mathring \Theta_q$.
If we choose $N_1=\upsilon_\varepsilon^{-2}$ and $N_2= \veps^{-2}\sigma^{2}(\varepsilon^\beta) $, then 
\begin{equation*}
 \veps^{-1} \left(Q_{\varepsilon}^{\rm ISSR} -\EE F(L_T) \right) \xrightarrow{\mathcal{L}} \mathcal{N}\left(C_{F}, 
 \sigma^2+ \tilde \sigma^2   \right), \quad \mbox{as  } \varepsilon  \rightarrow 0,
\end{equation*}
where 
$$\sigma^2= \EE \left[ F^{2}(L_T) e^{-\theta^{*}.L_T + T \kappa(\theta^{*})} \right] - \left[ \EE F(L_T) \right]^{2} \mbox{ and }\tilde \sigma^2= T\EE \left[ (\nabla F(L_T). \Sigma \nabla F(L_T))  e^{-\theta^{*}.L_T + T \kappa(\theta^{*}) }\right].$$
\end{theorem}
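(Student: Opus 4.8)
The plan is to mimic the proof of Theorem~\ref{CLT Statistical Romberg}, replacing its two i.i.d. Lindeberg--Feller arguments by the martingale-array technique of Theorem~\ref{CLT adaptative Monte Carlo}. First I would use the Esscher transform as in Section~\ref{sec:IS}: since $\theta_{i,k-1}^{\cdot}$ is $\mathcal F_{T,k-1}$-measurable and the fresh samples are independent of $\mathcal F_{T,k-1}$, each summand of the first (resp. second) block has conditional mean $\EE F(L_T^{\varepsilon^\beta})$ (resp. $\EE F(L_T^\varepsilon)-\EE F(L_T^{\varepsilon^\beta})$). Writing $Q_\varepsilon^1,Q_\varepsilon^2$ for the two centered blocks, this gives the decomposition $\veps^{-1}(Q_\varepsilon^{\rm ISSR}-\EE F(L_T))=\veps^{-1}Q_\varepsilon^1+\veps^{-1}Q_\varepsilon^2+\veps^{-1}(\EE F(L_T)-\EE F(L_T^\varepsilon))$, whose last term tends to $C_F$ by \eqref{weak_error}. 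The two blocks being built from independent samples, it suffices to prove the joint asymptotic normality of $\veps^{-1}Q_\varepsilon^1$ and $\veps^{-1}Q_\varepsilon^2$ with limit variances $\sigma^2$ and $\tilde\sigma^2$ and vanishing cross-bracket, after which the (bivariate) martingale CLT of Theorem~\ref{lindeberg martingales} yields $\mathcal N(C_F,\sigma^2+\tilde\sigma^2)$.

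For $\veps^{-1}Q_\varepsilon^1$ I would apply essentially verbatim the argument of Theorem~\ref{CLT adaptative Monte Carlo}, the only change being the cut-off $\varepsilon^\beta\searrow 0$ and $N_1=\veps^{-2}$. The bracket of the corresponding martingale equals $\frac{1}{N_1}\sum_{k=1}^{N_1}\gamma_{\varepsilon^\beta}(\theta_{1,k-1}^{\varepsilon^\beta})e^{T\kappa_{\varepsilon^\beta}(\theta_{1,k-1}^{\varepsilon^\beta})}-(\EE F(L_T^{\varepsilon^\beta}))^2$, and using the uniform equicontinuity of $(\kappa_\varepsilon)$ and $(\gamma_\varepsilon)$ (relations \eqref{eq-UC-kapa} and \eqref{eq-UC-gamma}), assumption $(\mathcal H_\theta)$ and the Toeplitz Lemma~\ref{toeplitz}, it converges to $\EE[F^2(L_T)e^{-\theta^* .L_T+T\kappa(\theta^*)}]-(\EE F(L_T))^2=\sigma^2$; the Lyapunov condition is checked as in Step~2 there.

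The core of the proof is $\veps^{-1}Q_\varepsilon^2$, where the ISMC martingale technique must be fused with the Taylor expansion of Theorem~\ref{CLT Statistical Romberg}. With increments $\Delta_k=\frac{\veps^{-1}}{N_2}\bigl[(F(L_{T,k}^{\varepsilon,\theta})-F(L_{T,k}^{\varepsilon^\beta,\theta}))e^{-\theta .L_{T,k}^{\varepsilon,\theta}+T\kappa_\varepsilon(\theta)}-(\EE F(L_T^\varepsilon)-\EE F(L_T^{\varepsilon^\beta}))\bigr]$ and $\theta=\theta_{2,k-1}^{\varepsilon}$, the Esscher transform reduces the conditional second moment to $\Gamma_\varepsilon(\theta):=\EE[(F(L_T^\varepsilon)-F(L_T^{\varepsilon^\beta}))^2e^{-\theta .L_T^\varepsilon+T\kappa_\varepsilon(\theta)}]$, so with $N_2=\veps^{-2}\sigma^2(\varepsilon^\beta)$ the bracket is $\sigma^{-2}(\varepsilon^\beta)\frac{1}{N_2}\sum_{k=1}^{N_2}(\Gamma_\varepsilon(\theta_{2,k-1}^{\varepsilon})-(\EE F(L_T^\varepsilon)-\EE F(L_T^{\varepsilon^\beta}))^2)$. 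I would first establish, as in Step~1 of Theorem~\ref{CLT Statistical Romberg}, that $\sup_{0<\varepsilon\leq1}\EE|\sigma^{-1}(\varepsilon^\beta)(F(L_T^\varepsilon)-F(L_T^{\varepsilon^\beta}))|^{2a}<\infty$ (splitting through $F(L_T)$ and using $\sigma(\varepsilon)\sigma^{-1}(\varepsilon^\beta)\to0$ from $\it H2$), which gives uniform integrability of $\sigma^{-2}(\varepsilon^\beta)(F(L_T^\varepsilon)-F(L_T^{\varepsilon^\beta}))^2$. Combined with the convergence in law $\sigma^{-1}(\varepsilon^\beta)(F(L_T^\varepsilon)-F(L_T^{\varepsilon^\beta}))\to\nabla F(L_T).\Sigma^{1/2}W_T$ of \eqref{convergence}, the a.s. convergences $L_T^\varepsilon\to L_T$ and $\kappa_\varepsilon(\theta^*)\to\kappa(\theta^*)$, and the independence of $W_T$ from $L_T$ (so the Gaussian average produces the factor $T\Sigma$), this yields $\sigma^{-2}(\varepsilon^\beta)\Gamma_\varepsilon(\theta^*)\to T\EE[(\nabla F(L_T).\Sigma\nabla F(L_T))e^{-\theta^*.L_T+T\kappa(\theta^*)}]=\tilde\sigma^2$, the centering term being negligible since the rescaled first moment converges to the zero mean of the Gaussian limit.

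To pass from the fixed $\theta^*$ to the adaptive sequence I would prove the uniform-in-$\varepsilon$ equicontinuity of $\theta\mapsto\sigma^{-2}(\varepsilon^\beta)\Gamma_\varepsilon(\theta)$ on $K_2$, by the same H\"older/Taylor estimate that produced \eqref{eq-UC-gamma}, now using the uniform $2a$-bound above together with Lemma~\ref{lem:ucv}; then Lemma~\ref{toeplitz} and $(\mathcal H_\theta)$ give convergence of the bracket to $\tilde\sigma^2$. The Lyapunov condition follows as in Step~2 of Theorem~\ref{CLT Statistical Romberg}: after Esscher and H\"older the $2\tilde a$-sum (for some $1<\tilde a<a$) is bounded by $C(\veps\sigma^{-1}(\varepsilon^\beta))^{2(\tilde a-1)}$, which vanishes because $\veps\sigma^{-1}(\varepsilon^\beta)\to0$ by $\it H2$. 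The main obstacle is precisely this $Q_\varepsilon^2$ bracket: one must simultaneously control the strong-error scale $\sigma(\varepsilon^\beta)$ of the increment difference, the exponential Esscher weight $e^{-\theta.L_T^\varepsilon+T\kappa_\varepsilon(\theta)}$ uniformly over $K_2$ and over $\varepsilon$, and the adaptivity of $\theta_{2,k-1}^{\varepsilon}$; the uniform-integrability estimate on $\sigma^{-1}(\varepsilon^\beta)(F(L_T^\varepsilon)-F(L_T^{\varepsilon^\beta}))$ and Lemma~\ref{lem:ucv} are the tools that render these three controls compatible, after which the independence of the two sample blocks (giving a vanishing cross-bracket) completes the joint CLT.
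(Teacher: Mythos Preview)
Your proposal is correct and follows essentially the same approach as the paper: the same decomposition into two centered blocks plus the bias from \eqref{weak_error}, the direct appeal to Theorem~\ref{CLT adaptative Monte Carlo} for the first block, and for the second block the martingale-array CLT with bracket expressed (after Esscher) through the function $\theta\mapsto\sigma^{-2}(\varepsilon^\beta)\EE\bigl[(F(L_T^\varepsilon)-F(L_T^{\varepsilon^\beta}))^2 e^{-\theta.L_T^\varepsilon}\bigr]$, whose pointwise limit is identified via \eqref{convergence} plus uniform integrability and whose uniform equicontinuity on $K_2$ is obtained exactly as in \eqref{eq-UC-gamma} using Lemma~\ref{lem:ucv}, after which Lemma~\ref{toeplitz} and $(\mathcal H_\theta)$ handle the adaptive averaging; the Lyapunov bound $C\,N_2^{-(\tilde a-1)}=C(\veps\sigma^{-1}(\varepsilon^\beta))^{2(\tilde a-1)}\to 0$ is also the paper's. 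The only cosmetic differences are that the paper separates the factor $e^{T\kappa_\varepsilon(\theta)}$ from your $\Gamma_\varepsilon$ (calling the remaining piece $\xi_\varepsilon$) and does not spell out the cross-bracket/independence argument for combining the two blocks, which you make explicit.
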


\begin{proof}
By assumption \eqref{weak_error} we only need to study the asymptotic  behavior of  
$
\veps^{-1}  Q_{1,\varepsilon}^{\rm ISSR} + \veps^{-1} Q_{2,\varepsilon}^{\rm ISSR}
$
with 
\begin{eqnarray*}
  Q_{1,\varepsilon}^{ \rm{ISSR}} & = &\frac{1}{N_1} \sum_{k=1}^{N_1} \Bigl( F(L_{T,k}^{\varepsilon^{\beta}, \theta_{1,k-1}^{\varepsilon^{\beta}}}) e^{-\theta_{1,k-1}^{\varepsilon^{\beta}}.L_{T,k}^{\varepsilon^{\beta}, \theta_{1,k-1}^{\varepsilon^{\beta}}} + T \kappa_{\varepsilon^{\beta}}(\theta_{1,k-1}^{\varepsilon^{\beta}})} - \EE F(L_{T}^{\varepsilon^{\beta}}) \Bigr)\\  &\mbox{\hspace{-1.8cm} and }&\\
  Q_{2,\varepsilon}^{\rm{ISSR}} & =& \frac{1}{N_2} \sum_{k=1}^{N_2} \Bigl( \bigl[F(L_{T,k}^{\varepsilon, \theta_{2,k-1}^{\varepsilon}}) - F(L_{T,k}^{\varepsilon^{\beta}, \theta_{2,k-1}^{\varepsilon}}) \bigr] e^{-\theta_{2,k-1}^{\varepsilon}.L_{T,k}^{\varepsilon, \theta_{2,k-1}^{\varepsilon}} + T \kappa_{\varepsilon}(\theta_{2,k-1}^{\varepsilon})} - \EE \bigl[ F(L_T^{\varepsilon}) - F(L_T^{\varepsilon^{\beta}}) \bigr]\Bigr).
\end{eqnarray*}
An application of Theorem \ref{CLT adaptative Monte Carlo} yields
$\veps^{-1} Q_{1,\varepsilon}^{\rm{ISSR}} \xrightarrow{\mathcal{L}} \mathcal{N} \left(0, \sigma^2 \right)$, as  $\varepsilon \rightarrow 0$.
For the second term, we aim to apply Theorem \ref{lindeberg martingales}. So, we introduce the martingale arrays $(M_k^{\varepsilon})_{k \geq 1}$
$$
 M_k^{\varepsilon}:= \frac{\veps^{-1}}{N_2} \sum_{\ell=1}^{k} \Bigl( \bigl(F(L_{T,\ell}^{\varepsilon, \theta_{2,\ell-1}^{\varepsilon}}) - F(L_{T,\ell}^{\varepsilon^{\beta}, \theta_{2,\ell-1}^{\varepsilon}}) \bigr) e^{-\theta_{2,\ell-1}^{\varepsilon}.L_{T,\ell}^{\varepsilon, \theta_{2,\ell-1}^{\varepsilon}} + T \kappa_{\varepsilon}(\theta_{2,\ell-1}^{\varepsilon})} - \EE \bigl[ F(L_T^{\varepsilon}) - F(L_T^{\varepsilon^{\beta}}) \Bigr]\Bigr).
$$
\paragraph{Step 1.}
Thanks to assumption $(\mathcal H_{\theta})$ and the Esscher transform, the quadratic variation of $M$ evaluated at $N_2$ is equal to
\begin{equation*}
  \langle M^{\varepsilon}\rangle_{N_2} = \frac{1}{N_2} \sum_{k=1}^{N_2}  
  \xi_{\varepsilon} (\theta_{2,k-1}^{\varepsilon})e^{T \kappa_{\varepsilon}(\theta_{2,k-1}^{\varepsilon})} -  \Bigl(\EE \bigl[\sigma^{-1}(\varepsilon^{\beta})( F(L_T^{\varepsilon})-F(L_T^{\varepsilon^{\beta}}) )\bigr]\Bigr)^2,
\end{equation*}
where for all $\theta \in \Theta_q$, 
$
\xi_{\varepsilon}(\theta) = \sigma^{-2}(\varepsilon^{\beta}) \EE \left( \left| F(L_T^{\varepsilon}) - F(L_{T}^{\varepsilon^{\beta}})\right|^{2} e^{-\theta.L_{T}^{\varepsilon}} \right).
$
Using the convergence in law given by relation \eqref{convergence}, the assumption 
${\sup_{0 < \varepsilon \leq 1} }\EE \left| \sigma^{-1}(\varepsilon) (F(L_{T}^{\varepsilon}) - F(L_{T}))\right|^{2a}<+\infty$
and the independence of $L_T$ and $W_T$, we deduce that the second term on the right hand side of the above equation vanishes when $\varepsilon$ tends to zero. Concerning  the first one, we aim to use Lemma \ref{toeplitz}. So, we only need to prove the  equicontinuity of the family $(\xi_\varepsilon)_{\varepsilon>0}$ on any compact subset of $\Theta_q$. First, we prove the simple convergence of $\xi_{\varepsilon}$
to $\xi$ with $
\xi(\theta) =  \EE \Bigl( \bigl|\nabla F(L_T). \Sigma^{\frac{1}{2}}W_T \bigr|^{2} e^{-\theta.L_{T}} \Bigr).
$
For this, we can proceed analogously to the proof of relation \eqref{convergence}. More precisely, we use  Taylor-Young's expansion with function $F$, the convergence in law given by \eqref{convergence}, the independence of $L_{T}^{\varepsilon}-L_{T}^{\varepsilon^{\beta}}$ and $L_{T}^{\varepsilon^{\beta}}$ and Slutsky's theorem to get 
\begin{equation*}
\sigma^{-2}(\varepsilon^{\beta}) \bigl| F(L_T^{\varepsilon}) - F(L_{T}^{\varepsilon^{\beta}})\bigr|^{2} e^{-\theta.L_{T}^{\varepsilon}}
\overset{\mathcal{L}} {\underset{\varepsilon \rightarrow 0}{\longrightarrow}}
   \bigl|\nabla F(L_T). \Sigma^{\frac{1}{2}}W_T \bigr|^{2} e^{-\theta.L_{T}}.
\end{equation*}
 Now,  applying Hölder's inequality with $\tilde a=\frac{aq}{a+q}$ yields
\begin{equation*}
 \EE \bigl|\sigma^{-2}(\varepsilon^{\beta}) \bigr( F(L_T^{\varepsilon}) - F(L_{T}^{\varepsilon^{\beta}})\bigl)^{2} e^{-\theta.L_{T}^{\varepsilon}} \bigr|^{\tilde a} \leq \EE^{\tilde a/a} \bigl|\sigma^{-1}(\varepsilon^{\beta}) \bigr( F(L_T^{\varepsilon}) - F(L_{T}^{\varepsilon^{\beta}})\bigl)  \bigr|^{ 2a}\EE^{(a-\tilde a)/a} e^{-\frac{\tilde a a}{a-\tilde a}\theta.L_{T}^{\varepsilon}}.
\end{equation*}
Using assumptions $\it {H2}$ and ${\sup_{0 < \varepsilon \leq 1} }\EE \left| \sigma^{-1}(\varepsilon) (F(L_{T}^{\varepsilon}) - F(L_{T}))\right|^{2a}<+\infty$, it is easy to check the uniform boundedness with respect to $\varepsilon$ of the first term on the right hand side of the above inequality.  Concerning the second one, since $q=\frac{\tilde a a}{a-\tilde a}$ we use relation \eqref{eq-UC-kapa} to deduce the same result. Hence, we have the simple convergence of $\xi_{\varepsilon}$ toward $\xi$ when $\varepsilon$ tends to zero. 
Therefore, it remains to prove the  equicontinuity of the family functions $(\xi_{\varepsilon})_{\varepsilon >0}$  on any compact subset $K\subset \Theta_q$. Replacing $F(L_T^{\varepsilon})$ by $\sigma^{-1}(\varepsilon^{\beta}) \bigr( F(L_T^{\varepsilon}) - F(L_{T}^{\varepsilon^{\beta}})\bigl)$ in the steps of the proof of relation \eqref{eq-UC-gamma} and using assumptions 
$\it {H2}$ and ${\sup_{0 < \varepsilon \leq 1} }\EE \left| \sigma^{-1}(\varepsilon) (F(L_{T}^{\varepsilon}) - F(L_{T}))\right|^{2a}<+\infty$ we prove 
the existence of a constant $c>0$ not depending on  $\varepsilon$ such that
\begin{equation}
\label{eq-UC-xi}
\left| \xi_{\varepsilon}(\theta)-\xi_{\varepsilon}(\theta')\right| \leq c |\theta-\theta'|.
\end{equation}
Thus, under assumption $(\mathcal H_{\theta})$, we get the almost sure convergence of $
 \xi_{\varepsilon}(\theta_{2,k}^{\varepsilon})$ toward $\xi(\theta^{*}) $ as $k$ goes to infinity and $\varepsilon$ vanishes.
We complete the proof of the first step using the almost sure convergence of $\kappa_{\varepsilon}(\theta_{2,k}^{\varepsilon})$ toward $\kappa(\theta^*)$ as $k$ goes to infinity and $\varepsilon$ vanishes. This last convergence is obtained thanks to relation \eqref{eq-UC-kapa}.
\paragraph{Step 2.} The second step of this proof consists on checking the Lyapunov condition {\it B3} of Theorem 
\ref{lindeberg martingales}. We proceed in the same way as in the second step of the proof of  Theorem \ref{CLT adaptative Monte Carlo}. We take $\tilde a =\frac{aq+a}{2a+q}$ and we get using the same arguments that
$\sum_{k=1}^{N_2} \EE \bigl[ \left| M_{k}^{\varepsilon} - M_{k-1}^{\varepsilon} \right|^{2\tilde a} | \mathcal{F}_{T,k-1} \bigr]$ is bounded by
$$
 \frac{2^{2\tilde a-1}}{N^{\tilde a}}\sum_{k=1}^{N_2}\xi_{a, \varepsilon}(\theta_{2,k-1}^{\varepsilon})e^{  (2\tilde a-1) T \kappa_\varepsilon(\theta_{2,k-1}^{\varepsilon})}
 +\frac{2^{2\tilde a-1}}{N^{\tilde a}}\Bigl| \EE\bigl[\sigma^{-1}(\varepsilon^{\beta}) (F(L_T^{\varepsilon}) - F(L_T^{\varepsilon^{\beta}})) \bigr]\Bigr|^{2a}
$$
where for all $\theta \in \Theta_q$, $ \xi_{\tilde a,\varepsilon}(\theta) 
= \EE\Bigl[ \bigl|\sigma^{-1}(\varepsilon^{\beta}) (F(L_T^{\varepsilon}) - F(L_T^{\varepsilon^{\beta}})) \bigr|^{2\tilde a}  e^{-(2\tilde a-1)\theta.L_{T}^{\varepsilon} }\Bigr] $. Then replacing  $F(L_T^{\varepsilon})$ by $\sigma^{-1}(\varepsilon^{\beta}) \bigr( F(L_T^{\varepsilon}) - F(L_{T}^{\varepsilon^{\beta}})\bigl)$ in the second step of the proof of Theorem \ref{CLT adaptative Monte Carlo}, the same arguments remain valid thanks 
to assumptions $\it {H2}$ and ${\sup_{0 < \varepsilon \leq 1} }\EE \left| \sigma^{-1}(\varepsilon) (F(L_{T}^{\varepsilon}) - F(L_{T}))\right|^{2a}<+\infty$.
So, we deduce the existence of $c>0$ 
not depending on $\varepsilon$ such that 
$$
\sum_{k=1}^{N_2} \EE \left[ \left| M_{k}^{\varepsilon} - M_{k-1}^{\varepsilon} \right|^{2\tilde a} | \mathcal{F}_{T,k-1}\right]\leq \frac{c}{N_2^{\tilde a -1}}.
$$
This completes the proof.
\end{proof}
\paragraph{Remark.}  Similarly as in the MC case, we still have in mind 
to reduce the variance associated now to  the SR method. This goes back to optimize separately $v_{1}$ and
$v_{2}$. Hence, the optimal choice corresponds to 
$$
\theta^{*}_{i}=\displaystyle \argmin_{\theta\in \Theta_{1,3}}v_{i}(\theta)\quad \text{and}\quad 
\theta^{*}_{i,\varepsilon}=\displaystyle\argmin_{\theta\in \Theta_{i,3}^{\varepsilon}}v_{i,\varepsilon}(\theta)\;\text{for}\; \varepsilon> 0\;\mbox{ and } i\in\{1,2\},
$$
where $v_i$ and  $v_{i,\varepsilon}$ are presented in section \ref{sec:IS}.
In the same way, the construction of stochastic sequences converging almost surely  to these desired targets and  satisfying $(\mathcal H_{\theta})$ is ensured by Corollary \ref{cor:cv:CRM}.
\section{Numerical results}
\label{num:test}
Now, we present numerical simulations that illustrate the efficiency of the ISSR method throughout the pricing of  vanilla options with an underlying asset following an exponential pure jump CGMY model. 
The CGMY process has been introduced by Carr, Geman, Madan and Yor  \cite{CGMY} with the aim to develop 
a model for the dynamic of equity log-returns which is rich enough to accommodate jumps
of finite or infinite activity, and finite or infinite variation. 
Monte Carlo  simulation of the CGMY process has been tackled in the literature 
specifically by Madan and Yor \cite{MadanYor}, Poirot and Tankov \cite{PoirotTankov} and Rosinski 
\cite{Rosinski}.  
A CGMY process 
is a pure jump process with  generating triplet $(0, 0, \nu)$ where for $C>0, G> 0, M > 0$ and $Y<2$  
\begin{equation}
\label{eq:CGMY}
 \nu(dx)= C \frac{e^{-Mx}}{x^{1+Y}} \mathbf{1}_{x>0} dx+\frac{C e^{-G |x|}}{|x|^{1+Y}}  \mathbf{1}_{x<0} dx.
\end{equation}

Following the notations of \cite{PoirotTankov}, we consider the Lévy-Kintchine 
representation with a truncation function $h$ and a characteristic exponent given by 
$$\psi(u)= i\gamma_h u+\int_{\RR}(e^{iux}-1-iuh(x))\nu(dx)\mbox{ with } 
\gamma_h=\int_{\RR}(h(x)-x\mathbf 1_{\{|x|\leq 1\}})\nu(dx),\; u\in\RR.$$
\begin{itemize}
 \item [$\bullet$] For $1<Y<2$ and  $h(x)=x$, we have  $\gamma_h=\int_{|x| \geq 1}x  \nu(dx)$ and
\begin{equation*}
\psi(u)= iu \gamma_h + C \Gamma(-Y) \left[ M^Y \left( (1-\frac{iu}{M})^{Y} -1 + \frac{iuY}{M} \right)+ G^Y \left((1+\frac{iu}{G})^{Y} -1 - \frac{iuY}{G} \right) \right]. 
\end{equation*}
\item[$\bullet$] For $0<Y<1$ and  $h(x)=0$, we have  $\gamma_h=\int_{|x| \leq 1} x  \nu(dx)$ and
\begin{equation*}
\psi(u)= iu \gamma_h + C \Gamma(-Y) \left[ M^Y \left( (1-\frac{iu}{M})^{Y} -1 \right)+ G^Y \left((1+\frac{iu}{G})^{Y} -1 \right) \right].
\end{equation*}
\end{itemize}
In what follows, we consider the risk neutral model with jumps generalizing the Black Scholes model by replacing the 
Brownian motion by $(L_t)_{0\leq t\leq T}$ the CGMY process with generating triplet $(\gamma, 0, \nu)$, $\gamma\in\RR$
and define the asset price 
$$S_t=S_0\exp(rt+L_t), \mbox{ where } r>0  \mbox{ is  the interest rate and } S_0>0.$$
To guarantee that $e^{-rt}S_t$ is a martingale we have to impose the condition
$\int_{|x|\geq 1}e^x\nu(dx)<\infty$ (which is satisfied as soon as $M>1$ ) and  the condition
\begin{equation}
\label{risk_neutral}
\gamma+\int_{\RR}(e^y-1-y\mathbf 1_{\{|y|\leq 1\}})\nu(dy)=0,
\end{equation}
or in other words  $\gamma=-\psi(-i)$.

Now, let us recall that for $0<\varepsilon <1$, the approximation $(L^{\varepsilon}_t)_{t\geq 0}$ 
of $(L_t)_{t\geq 0}$  is a Lévy process  with generating triplet $(\gamma,0,\nu_{\varepsilon})$ where
 $\nu_{\varepsilon}(dx):=\mathbf 1_{\{|x|\geq \varepsilon \}}\nu(dx)$.   It is worth to note
 that $(L^{\varepsilon}_t)_{t\geq 0}$ can be seen as a compound Poisson process with drift 
 $\gamma_{\varepsilon}:=\gamma -\int_{\varepsilon \leq |x|\leq 1}x\nu(dx)$, see \eqref{Levy_app}.
This compound Poisson process can be represented as the difference of two independent processes namely
the positive part and the negative one. More precisely, the positive part (resp. the negative part) 
is a compound Poisson process with jump size $\nu^+_{\varepsilon}=\mathbf 1_{\{x\geq \varepsilon \}}
\frac{\nu(dx)}{\nu([\varepsilon,+\infty[)}$ (resp. $\nu^-_{\varepsilon}=\mathbf 1_{\{x\leq -\varepsilon \}}
\frac{\nu(dx)}{\nu(]-\infty,-\varepsilon])}$ )  and intensity $\nu([\varepsilon,+\infty[)$ 
(resp. $\nu(]-\infty,-\varepsilon])$).   
To simulate these compound Poisson processes, we can use either the classical rejection method as described 
in Cont and Tankov \cite{ContTankov} or an improved method used by Madan and Yor \cite{MadanYor}.  
Indeed,  when we simulate the positive part  
we choose $\nu^+_{0,\varepsilon}$ so that 
$\frac{d\nu_{\varepsilon}^+}{d\nu^+_{0,\varepsilon}}(x)=e^{-Mx}\mathbf 1_{\{x>\varepsilon\}}\leq 1.$
Then, according to Rosinski \cite{Rosinski2001} we may simulate the paths of $\nu_{\varepsilon}^+$ from those of $\nu_{0,\varepsilon}^+$
by only accepting all jumps $x$ in the paths of $\nu_{0,\varepsilon}^+$ for which 
$\frac{d\nu_{\varepsilon}^+}{d\nu^+_{0,\varepsilon}}(x)>u$
where $u$ is an independent draw from uniform distribution. Hence, we use following algorithm 
\begin{algorithm}[H]
\caption{ Simulating the positive jump size $Z$ of the CGMY process  using Rosinski's rejection}
\begin{algorithmic} 
\REQUIRE $U_1$ and $U_2$ are uniform random variables and $Z= {\varepsilon}{U_1^{-\frac{1}{Y}}}$
\IF{$U_2 > \exp{-M.Z}$}
\STATE $Z=0$
\ENDIF
\RETURN Z

\end{algorithmic}
\end{algorithm}
In the same way, we simulate the negative jump part by replacing in the above algorithm the parameter $M$ by $G$.

Our aim is to test our approximation methods for computing the price of a vanilla option
with payoff $F$. To do so, we use  the importance sampling technique, introduced in section \ref{sec:IS},  to
approximate the price $e^{-r T} \EE F(S_T)$ by  
\begin{equation}
e^{-r T} \EE \left[F(S_T^{\varepsilon, \theta}) e^{-\theta. L_{t}^{\varepsilon, \theta}+ T \kappa_{\varepsilon}(\theta)} \right], 
\mbox{ with }S_T^{\varepsilon,\theta}=S_0\exp(rt+L^{\varepsilon,\theta}_t)
\end{equation}
where $L_{T}^{\varepsilon, \theta}$ is also a L\'evy process with generating triplet 
$(\gamma_{\varepsilon, \theta}, 0, \nu_{\varepsilon, \theta})$, 
where $\nu_{\varepsilon, \theta}=e^{\theta.x} \nu_{\varepsilon}(dx)$ and $\gamma_{\theta, \varepsilon}=\gamma_{\varepsilon}+ \int_{-1}^{1} x (e^{\theta.x}-1) \nu_{\varepsilon}(dx)$.
The choice of $\theta$ depends on using the classical 
MC method or the SR one. According to relation \eqref{optimal_theta}, 
$\theta_{1,\varepsilon}^{*}$ is the optimal choice for the MC method.
However, for the SR method, we omptize separately each quantity appearing in 
the associated variance and the optimal choice is given by the couple 
$(\theta_{1,\varepsilon}^{*},\theta_{2,\varepsilon}^{*})$ (see relation \eqref{optimal_theta}) . To compute these optimal terms,
we use the constrained algorithms introduced in the system \eqref{sequence_theta}.
It is worth to note that in practice it is easier to use $\kappa(\theta)$ instead of $\kappa_{\varepsilon}(\theta)$.
\subsection{One-dimensional CGMY process}
In this setting we consider the European call option with payoff $F(x)=(x-{\rm Strike})_+$ . 
The parameters of the CGMY model are chosen as follows: $S0=100, {\rm Strike}=100, C=0.0244, G=0.0765, M=7.5515, Y=1.2945$, the free interest rate $r=\log(1.1)$ and maturity time $T=1$.
We run $50000$ iteration for the constrained algorithm with the compact set $[-G,M]$. 
The obtained optimal values are given by $(\theta_{1,\varepsilon}^{*},\theta_{2,\varepsilon}^{*})=(5.3,2.5)$ 
(see Figure \ref{fig:optimal_theta}). 
\begin{figure} [H]
\begin{center}
\fbox{
      \includegraphics [width=0.8\textwidth,height=5cm] {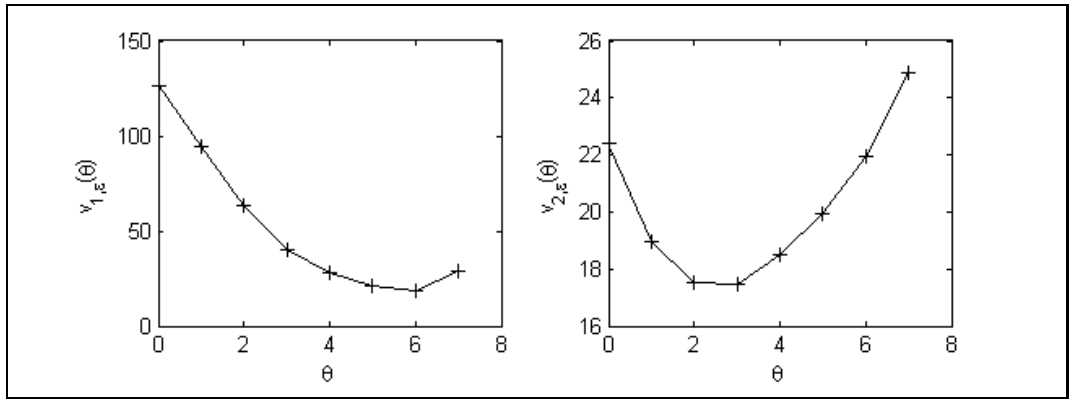}       
}
      \caption{\it Variances $v_{1,\varepsilon}$ and  $v_{2,\varepsilon}$ versus $\theta$ in the one-dimensional setting. }  

  \label{fig:optimal_theta}
   \end{center}

   \end{figure}  
   
In order to compare the ISMC algorithm \eqref{eq:ISMC} and the  ISSR one \eqref{eq:ISSR} we use the couple 
$(\theta_{1,\varepsilon}^{*},\theta_{2,\varepsilon}^{*})$ computed above. 
For this, we compute for each method the CPU time (per second) (the computations are done on a PC with a 2.5 GHz Intel core i5 processor) and an error measure given by the mean squared error (MSE) which is defined by
\begin{equation}
{\rm MSE}= \frac{1}{30} \sum_{i=1}^{30} (\mbox{Real value} - \mbox{Simulated value})^2. 
\label{MSE}
\end{equation}
The real value is obtained using the Fourier-cosine method introduced by Fang and Oosterlee \cite{FangOosterlee} for a one-dimensional  CGMY with an accuracy of order $10^{-10}$. This method is available  in the free online version of Premia platform (\url{https://www.rocq.inria.fr/mathfi/Premia/index.html}). For this setting, our ISSR algorithm  \eqref{eq:ISSR}  is now available in the latest premium version of Premia. 

For different values of $\varepsilon$, we give in Figure \ref{fig:MSE_time_CGMY_call} below the $\log$-$\log$ plot  of the obtained MSE versus the CPU time  for the classical Monte Carlo (MC), the statistical Romberg (SR), the importance sampling Monte Carlo (ISMC) and the importance sampling statistical Romberg (ISSR) methods. 

According to Table \ref{table:time reduction call} and for a fixed MSE of order  $6\cdot10^{-3}$, the ISSR method reduces the CPU time by a factor of $8,73$ compared to the ISMC one. 
\begin{figure} [hbt]
\begin{center}
      \includegraphics [width=14cm, height=9cm] {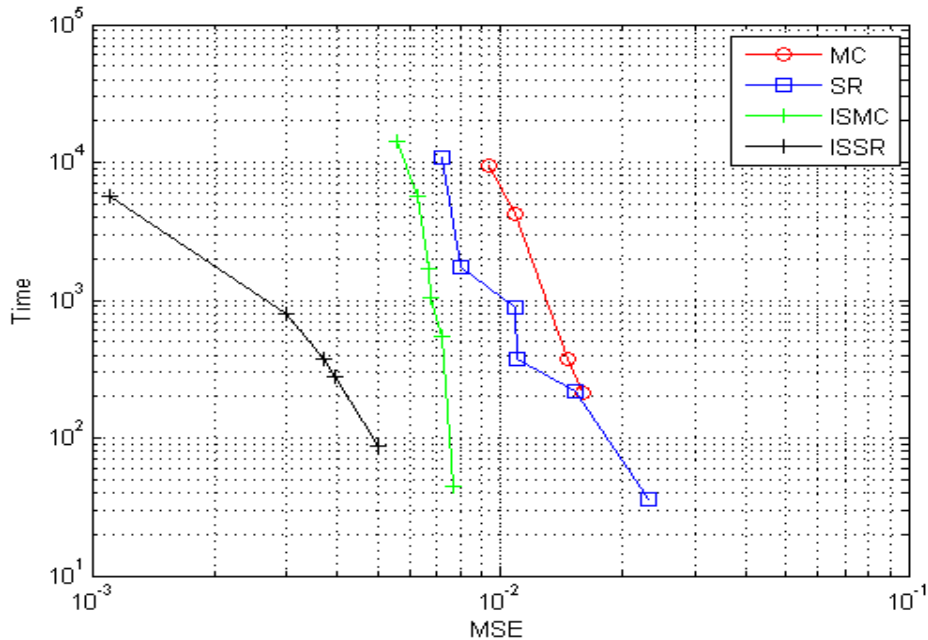}
  \caption{ CPU time versus MSE  in  the one-dimensional setting.}  
 \label{fig:MSE_time_CGMY_call}
 \end{center}
\end{figure} 
Clearly the ISSR method is the most efficient compared to the other ones. 
\begin{table}[H]
{\renewcommand{\arraystretch}{1} 
{\setlength{\tabcolsep}{0.6cm} 
\begin{center}
\begin{tabular} {*{3}{c}} 
    \hline 
    \multicolumn{3}{c}{\textbf{Time complexity reduction}} \\
    \hline
      MSE   & ISMC CPU time &  ISSR  CPU time  \\
    \hline
      $7\cdot10^{-3}$ & $7\cdot10^{2}$ & $5\cdot10^{2}$ \\

     $6,5\cdot 10^{-3}$ & $2\cdot10^{3}$ & $6\cdot10^{2}$ \\

     $6\cdot10^{-3}$ & $5,5\cdot10^{3}$ & $6,3\cdot10^{2}$ \\

     $5,5\cdot10^{-3}$ & $15\cdot10^{3}$ & $7\cdot10^{2}$ \\

     \hline
\end{tabular} 
 \caption{Time complexity reduction (ISSR versus ISMC).}
 \label{table:time reduction call}
\end{center} 
}}
\end{table}
\subsection{Two-dimensional CGMY process}
We focus now  on the computation of a price of the form  $e^{-rT}\EE F(S^1_T,S_T^2)$, where $F(x,y)=(x+y-{\rm Strike})_+$ and the couple $(S_t^1,S_t^2)_{0\leq t\leq T}$ denotes the underlying asset process. 
In this setting we choose $(S_t^1,S_t^2)=(S_0e^{ rt+L_t^1},S_0e^{ rt+L_t^2} )$ where 
$(L_t^1)_{0\leq t\leq T}$ and $(L_t^2)_{0\leq t\leq T}$ are two independent CGMY processes with generating triplets $(\gamma_1,0,\nu_1)$ and  $(\gamma_2,0,\nu_2)$ such that the processes $(e^{-rt}S_t^1)_{0\leq t\leq T}$ and $(e^{-rt}S_t^2)_{0\leq t\leq T}$ are two martingales. So,  it amounts to select$\gamma_1$ and $\gamma_2$ as in relation \eqref{risk_neutral}.

Since the Fourier-cosine method with high accuracy is no more available for the two-dimensional setting, the  "Benchmark" price is obtained by running the classical MC algorithm with a very small value of $\varepsilon$. Indeed, for $\varepsilon=10^{-6}$ the "Benchmark" price is $21.0782$ with a CPU time of $24718$ seconds.
The parameters of the considered two CGMY processes defined by $(C,G_1, M_1, Y)$ and $(C,G_2,M_2,Y)$ are chosen as follows: 
$C=0.0244, G_1=0.0765, M_1=7.55015, G_2=2, M_2=5, Y=0.9$,  $S0=100$,  ${\rm Strike}=200$, $r=\log(1.1)$ and the maturity time $T=1$.
Using the constrained algorithms \eqref{sequence_theta}, we obtain the values of the  optimal two-dimensional vectors given by relation \eqref{optimal_theta} and we get   
$\theta_{1,\varepsilon}^{*}=(4,3.5)$ and $\theta_{2,\varepsilon}^{*}=(3.5 , 1.1)$. In Figure \ref{fig:optimal_theta_2dim_Yinf1}, we plot the evolution of both variances $v_{1,\varepsilon}$ and $v_{2,\varepsilon}$ in terms of $\theta=(\theta_1,\theta_2)\in[-G_1,M_1]\times[-G_2,M_2]$. 
\begin{figure} [H]
\fbox{
      \includegraphics [width=0.97\textwidth,height=6cm] {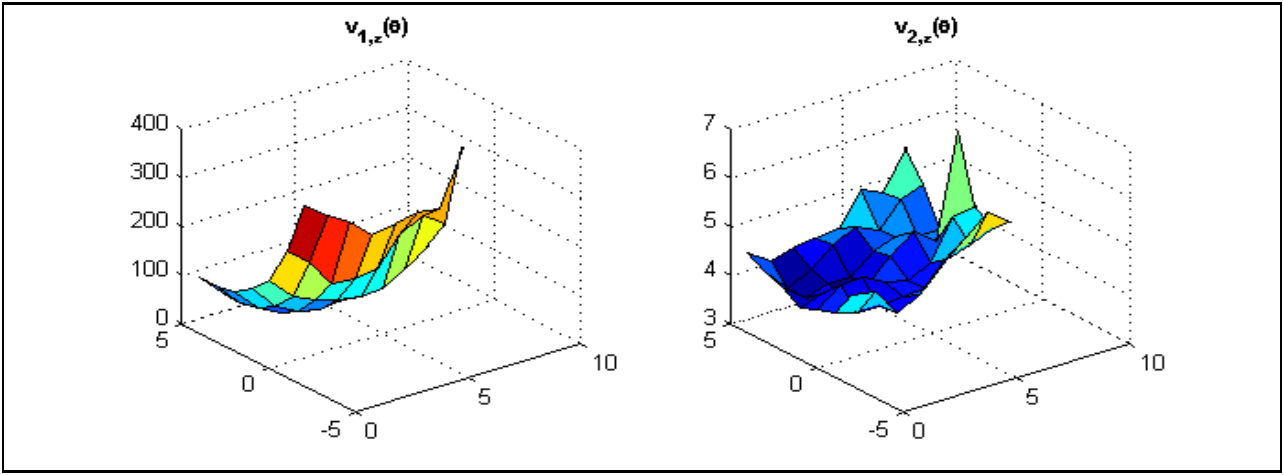}       
}
      \caption{ Variances $v_{1,\varepsilon}$ and  $v_{2,\varepsilon}$ versus $\theta$ in the two-dimensional setting. }  

  \label{fig:optimal_theta_2dim_Yinf1}

   \end{figure}  
Now we proceed as in the one-dimensional case to compare the different methods. Figure \ref{fig:MSE_time_CGMY_2dim_Yinf1} confirms the superiority of the ISSR
method over the other ones and this holds even when we compare it to the ISMC method.
Indeed, for a given  MSE, the ISSR spends less time than the other methods to compute the desired option price.  The difference in terms of computational time becomes more significant as soon as the MSE becomes very small, which corresponds to low values of $\varepsilon$ (see Figure \ref{fig:MSE_time_CGMY_2dim_Yinf1} below).

According to Table \ref{table:time reduction 2dim Yinf1} and  for a fixed  MSE of order $10^{-3}$, 
the ISSR  reduces the CPU time of the considered option price by a factor $2$ in comparison to the ISMC method. Moreover, this factor becomes more important when we consider a smaller MSE. In fact, for a fixed MSE of order $3\cdot 10^{-4}$, the ISSR reduces the CPU time by a factor $>5$ in comparison to the ISMC one.
\begin{table}[H]
{\renewcommand{\arraystretch}{1} 
{\setlength{\tabcolsep}{0.6cm} 
\begin{center}
\begin{tabular} {*{3}{c}} 
    \hline 
    \multicolumn{3}{c}{\textbf{Time complexity reduction}} \\
    \hline
      MSE  & ISMC  CPU time &  ISSR CPU time  \\
    \hline
      $10^{-3}$ & $40$ & $20$ \\

     $6\cdot 10^{-4}$ & $100$ & $30$ \\

     $4\cdot 10^{-4}$ & $250$ & $60$ \\

     $3\cdot 10^{-4}$ & $450$ & $80$ \\

     \hline
\end{tabular} 
 \caption{Time complexity reduction ISSR versus ISMC.}
 \label{table:time reduction 2dim Yinf1}
\end{center} 
}}
\end{table} 
\begin{figure} [H]
\begin{center}
      \includegraphics [width=14cm, height=9cm] {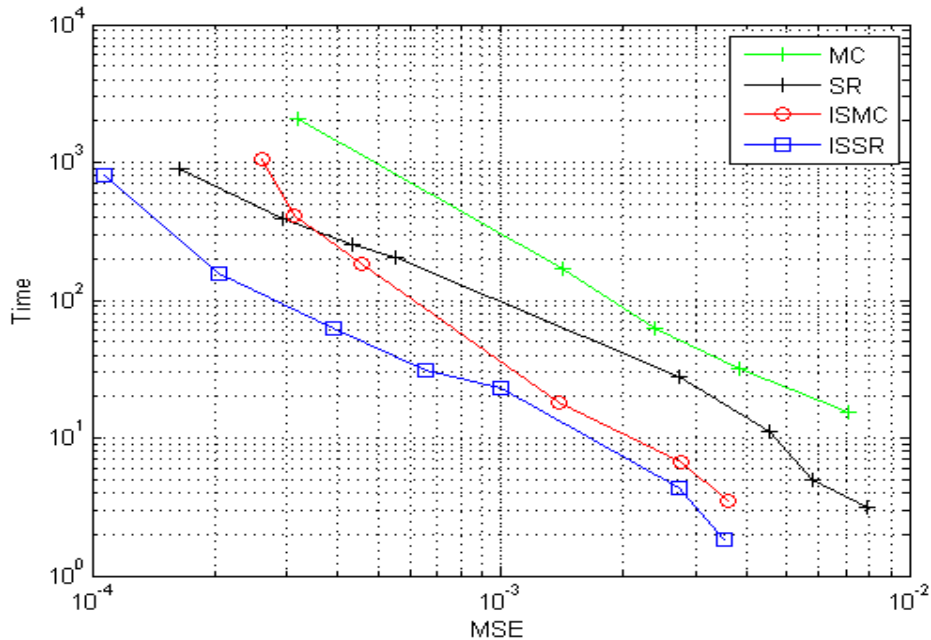}       
      \caption{ {\rm CPU} time versus {\rm MSE}  in  the two-dimensional setting.}  

  \label{fig:MSE_time_CGMY_2dim_Yinf1}
\end{center}
   \end{figure}  
\section{conclusion}
In this paper, we highlight the superiority of the ISSR 
method over the classical Monte Carlo approach for the setting of Lévy processes. It may be of interest to extend this study to the setting of Euler discretization schemes for Lévy driven diffusions developed by 
Protter and Talay \cite{ProtterTalay} and Jacod, Kurtz, Méléard and Protter \cite{JacodKurtzMeleardProtter}. 
Also, a next natural question consists on developing analogous results for path dependent options in 
exponential Lévy models in the spirit of the works of Dia and Lamberton \cite{DiaLambertonA, DiaLambertonB}. 
These two points will be the object of a forthcoming works. 
\section{Appendix}
We recall first the Lindeberg Feller Central Limit Theorem for independent random variables.
\begin{theorem}[Lindeberg Feller Central Limit Theorem \cite{Bil}]
\label{CLT Lindeberg Feller}
Let $(k_n)_{n\in\NN}$ be a sequence such that $k_n \longrightarrow \infty$, as $n \longrightarrow \infty$ and for each $n\in \NN$ we consider a sequence $X_{n1}, X_{n2}, ..., X_{n k_{n}}$ of independent centered and real square integrable random variables. We make the following two assumptions.
\begin{itemize}
 \item [${\it A1.}$] There exists a positive constant $v$ such that
 $\sum_{i=1}^{k_n} \EE (X_{ni})^2 \underset{n\rightarrow\infty}{\longrightarrow} v $.
 \item [${\it A2.}$] Lindeberg's condition holds: that is for all $\varepsilon >0$,
  $\sum_{i=1}^{k_n} \EE (|X_{ni}|^2 \mathbf{1}_{|X_{ni}|\geq \varepsilon}) \underset{n\rightarrow\infty}{\longrightarrow} 0$.
 Then 
 $$\sum_{i=1}^{k_n} X_{ni} \xrightarrow{\mathcal{L}} \mathcal{N}(0,v)\quad\mbox{ as }\; n\rightarrow\infty. $$ 
\end{itemize}
\end{theorem}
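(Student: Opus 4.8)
\emph{Proof proposal.} The plan is to establish convergence in distribution through characteristic functions and L\'evy's continuity theorem. Write $\sigma_{ni}^2 := \EE(X_{ni}^2)$, $S_n := \sum_{i=1}^{k_n} X_{ni}$ and $\varphi_{ni}(t) := \EE(e^{itX_{ni}})$; by $\it A1$ we have $s_n^2 := \sum_{i=1}^{k_n}\sigma_{ni}^2 \to v$. The goal is to show that $\EE(e^{itS_n}) = \prod_{i=1}^{k_n}\varphi_{ni}(t) \to e^{-vt^2/2}$ for each fixed $t\in\RR$. A preliminary observation, extracted from $\it A2$, is the uniform asymptotic negligibility $\max_{1\leq i\leq k_n}\sigma_{ni}^2\to 0$: indeed, for any $\varepsilon>0$, $\sigma_{ni}^2\leq \varepsilon^2+\EE(X_{ni}^2\mathbf 1_{|X_{ni}|\geq\varepsilon})$, so $\max_i\sigma_{ni}^2\leq \varepsilon^2+\sum_{i=1}^{k_n}\EE(X_{ni}^2\mathbf 1_{|X_{ni}|\geq\varepsilon})$, and letting $n\to\infty$ then $\varepsilon\to 0$ gives the claim.

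First I would estimate each factor. Using the elementary bound $|e^{ix}-1-ix+x^2/2|\leq\min(|x|^3/6,x^2)$ together with the centering $\EE X_{ni}=0$, one gets $|\varphi_{ni}(t)-(1-t^2\sigma_{ni}^2/2)|\leq \EE\bigl[\min(|tX_{ni}|^3/6,(tX_{ni})^2)\bigr]$. Splitting the expectation over $\{|X_{ni}|<\varepsilon\}$ and $\{|X_{ni}|\geq\varepsilon\}$, the contribution of the first set is at most $(|t|^3\varepsilon/6)\sigma_{ni}^2$ and that of the second at most $t^2\EE(X_{ni}^2\mathbf 1_{|X_{ni}|\geq\varepsilon})$. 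Summing over $i$ and using $\it A1$ and $\it A2$, the total $\sum_{i=1}^{k_n}|\varphi_{ni}(t)-(1-t^2\sigma_{ni}^2/2)|$ is bounded by $(|t|^3\varepsilon/6)s_n^2+t^2\sum_{i=1}^{k_n}\EE(X_{ni}^2\mathbf 1_{|X_{ni}|\geq\varepsilon})$, which tends to $0$ upon letting $n\to\infty$ and then $\varepsilon\to 0$.

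Next I would invoke the product-comparison lemma: if complex numbers $z_i,w_i$ satisfy $|z_i|,|w_i|\leq 1$ then $|\prod_i z_i-\prod_i w_i|\leq\sum_i|z_i-w_i|$. Since $|\varphi_{ni}(t)|\leq 1$ and, by negligibility, $|1-t^2\sigma_{ni}^2/2|\leq 1$ for $n$ large, the previous step yields $\prod_i\varphi_{ni}(t)-\prod_i(1-t^2\sigma_{ni}^2/2)\to 0$. A second application of the same lemma, combined with $|1-u-e^{-u}|\leq u^2/2$ and $\sum_i(t^2\sigma_{ni}^2/2)^2\leq (t^2\max_i\sigma_{ni}^2/4)\sum_i(t^2\sigma_{ni}^2/2)\to 0$, shows $\prod_i(1-t^2\sigma_{ni}^2/2)-\prod_i e^{-t^2\sigma_{ni}^2/2}\to 0$. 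Since $\prod_i e^{-t^2\sigma_{ni}^2/2}=e^{-t^2 s_n^2/2}\to e^{-vt^2/2}$, we obtain $\prod_i\varphi_{ni}(t)\to e^{-vt^2/2}$, and L\'evy's continuity theorem gives $S_n\xrightarrow{\mathcal L}\mathcal N(0,v)$.

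The delicate point is the uniform control of the product of characteristic functions, i.e.\ the successive replacements of $\prod\varphi_{ni}$ by $\prod(1-t^2\sigma_{ni}^2/2)$ and then by $\prod e^{-t^2\sigma_{ni}^2/2}$: both require that all factors stay in the unit disc and that the quadratic remainders be summably small, which is exactly what the negligibility $\max_i\sigma_{ni}^2\to0$ furnishes and where Lindeberg's condition $\it A2$ is indispensable ($\it A1$ alone does not suffice). Finally, I would note that the Lyapunov-type condition $\it A3$ used in the main proofs, namely $\sum_{i}\EE|X_{ni}|^{2\tilde a}\to0$ for some $\tilde a>1$, implies $\it A2$ through $\EE(X_{ni}^2\mathbf 1_{|X_{ni}|\geq\varepsilon})\leq\varepsilon^{-2(\tilde a-1)}\EE|X_{ni}|^{2\tilde a}$, so the theorem applies under $\it A1$ and $\it A3$ as well.
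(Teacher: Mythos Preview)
Your proof is correct and follows the classical characteristic-function route (Taylor expansion of $e^{itX_{ni}}$, the telescoping product-comparison lemma, asymptotic negligibility $\max_i\sigma_{ni}^2\to0$ extracted from Lindeberg's condition, and L\'evy's continuity theorem). This is essentially the argument given in Billingsley's book, which is the reference the paper cites.

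Note, however, that the paper does \emph{not} supply its own proof of this statement: the theorem appears in the Appendix purely as a quoted result from \cite{Bil}, to be invoked as a black box in the proofs of Theorems~\ref{clt:MC} and~\ref{CLT Statistical Romberg}. So there is no ``paper's proof'' to compare against; your write-up simply fills in what the paper deliberately outsourced to the literature. One minor arithmetic slip: in your bound $\sum_i(t^2\sigma_{ni}^2/2)^2\leq (t^2\max_i\sigma_{ni}^2/4)\sum_i(t^2\sigma_{ni}^2/2)$ the constant should be $t^2\max_i\sigma_{ni}^2/2$ rather than $/4$, but this is immaterial to the conclusion.
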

\paragraph{Remark.}
The following assumption  known as the Lyapunov condition implies the Lindberg's condition {\it A2.}.
\begin{itemize}
\item [${\it A3.}$] There exists a real number $a>1$ sucht that
$$ \sum_{k=1}^{k_{n}} \mathbb{E} \left[|X_{ni}|^{2a} \right] \underset{n\rightarrow\infty}{\longrightarrow} 0.$$
\end{itemize}
This result was generalized in the context of martingales arrays.
\begin {theorem} [Central Limit Theorem for martingales arrays \cite{DM}]
\label{lindeberg martingales}
Suppose that $(\Omega,\mathbb F, \PP)$ is a probability space and that for each $n$, we have a filtration $\mathbb{F}_{n}=(\mathcal{F}_{k}^{n})_{k\geq 0}$, a sequence $k_{n} \longrightarrow \infty  \mbox{ as  } n \longrightarrow \infty$  and a real square integrable vector martingale $M^{n}=(M_{k}^{n})_{k\geq 0}$ which is adapted to $\mathbb{F}_{n}$ and has quadratic variation denoted by $(\langle M \rangle_{k}^{n})_{k\geq 0} $.
We make the following two assumptions.
\begin{itemize}
 \item [B1.] There exists a deterministic symmetric positive semi-definite matrix $\varGamma$, such that
$$\langle M \rangle_{k_{n}}^{n}= \sum_{k=1}^{k_{n}} \mathbb{E}\left[ |M_{k}^{n}-M_{k-1}^{n}|^{2} | \mathcal{F}_{k-1}^{n}  \right] \overset{\mathbb{P}}{\underset{n\rightarrow\infty}{\longrightarrow}}  \varGamma.$$
 \item [B2.] Lindeberg's condition holds: that is, for all $\varepsilon>0$,
$$\sum_{k=1}^{k_{n}} \mathbb{E} \left[| M_{k}^{n}-M_{k-1}^{n}|^{2} 1_{\{| M_{k}^{n}-M_{k-1}^{n}|>\varepsilon\}} | \mathcal{F}_{k-1}^{n} \right] \overset{\mathbb{P}}{\underset{n\rightarrow\infty}{\longrightarrow}} 0.$$
Then $$ M_{k_{n}}^{n} \xrightarrow{\mathcal{L}} \mathcal{N}(0,\varGamma)\quad\mbox{ as }\; n\rightarrow\infty.$$ 
\end{itemize}
\end {theorem}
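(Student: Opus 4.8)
The statement is the classical martingale-array central limit theorem, so the plan is to follow the characteristic-function route via an exponential (multiplicative) martingale, which is the standard way to upgrade the independent Lindeberg--Feller theorem (Theorem \ref{CLT Lindeberg Feller}) to the dependent setting. First, by the Cram\'er--Wold device it suffices to treat the scalar case: fixing $u\in\RR^d$ and writing $S_n:=u\cdot M_{k_n}^n=\sum_{k=1}^{k_n}Y_{nk}$ with $Y_{nk}:=u\cdot(M_k^n-M_{k-1}^n)$, the goal reduces to proving the one-dimensional convergence $\EE[e^{iS_n}]\to \exp(-\tfrac12 u'\varGamma u)$ and then invoking L\'evy's continuity theorem. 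Here $(Y_{nk})_k$ is a scalar martingale-difference array, its conditional variances $\sigma_{nk}^2:=\EE[Y_{nk}^2\mid\mathcal F_{k-1}^n]$ satisfy $\sum_{k=1}^{k_n}\sigma_{nk}^2=u'\langle M\rangle_{k_n}^n u\overset{\PP}{\to}u'\varGamma u$ by B1, and the scalar Lindeberg condition $\sum_k\EE[Y_{nk}^2\mathbf{1}_{\{|Y_{nk}|>\eta\}}\mid\mathcal F_{k-1}^n]\overset{\PP}{\to}0$ for every $\eta>0$ follows from B2.

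The second step is a truncation. Using B2 I would first establish that $\max_{1\le k\le k_n}|Y_{nk}|\overset{\PP}{\to}0$ and that, for a threshold $\delta>0$, replacing $Y_{nk}$ by the recentered truncation $\tilde Y_{nk}:=Y_{nk}\mathbf{1}_{\{|Y_{nk}|\le\delta\}}-\EE[Y_{nk}\mathbf{1}_{\{|Y_{nk}|\le\delta\}}\mid\mathcal F_{k-1}^n]$ (so that the martingale-difference property is preserved) alters the partial sum by a quantity negligible in probability, while the truncated conditional variances still converge to $u'\varGamma u$. After this reduction all increments are bounded by $2\delta$ and every conditional expectation below is well defined.

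The third and decisive step is the exponential-martingale telescoping. Set $a_{nk}:=\EE[e^{iY_{nk}}\mid\mathcal F_{k-1}^n]$; for $\delta$ small relative to the fixed $u$ the quantity $|a_{nk}-1|$ is small, so $P_k:=\prod_{j\le k}e^{iY_{nj}}/a_{nj}$ is a well-defined complex martingale with $\EE[P_{k_n}]=1$, that is $\EE[e^{iS_n}\prod_{j}a_{nj}^{-1}]=1$. Because $Y_{nk}$ is centered, a second-order expansion gives $a_{nk}=1-\tfrac12\sigma_{nk}^2+r_{nk}$, where $\sum_k|r_{nk}|\overset{\PP}{\to}0$: splitting the estimate $|e^{ix}-1-ix+\tfrac{x^2}{2}|\le\min(|x|^2,|x|^3)$ at $\eta$, the small part contributes $\le\eta\sum_k\sigma_{nk}^2$ and the large part is controlled by Lindeberg, after which $\eta\to0$. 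Hence $\prod_k a_{nk}=\exp(\sum_k\log a_{nk})\overset{\PP}{\to}\exp(-\tfrac12 u'\varGamma u)$ by B1. Writing, with $c:=\exp(-\tfrac12 u'\varGamma u)$,
$$
\EE[e^{iS_n}]=c+\EE\Bigl[e^{iS_n}\,{\textstyle\prod_j} a_{nj}^{-1}\bigl({\textstyle\prod_j} a_{nj}-c\bigr)\Bigr],
$$
and using $\EE[e^{iS_n}\prod_j a_{nj}^{-1}]=1$, the error term vanishes once $|\prod_j a_{nj}^{-1}|$ is bounded in probability and $\prod_j a_{nj}\to c$ in probability. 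Letting first $n\to\infty$ and then $\delta\to0$ yields the claim for each fixed $u$.

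The main obstacle is exactly the uniform control of the inverse product $\prod_j a_{nj}^{-1}$: a crude bound $|a_{nj}|\ge1-2\delta$ only gives the useless estimate $(1-2\delta)^{-k_n}$, which explodes. What saves the argument is the finer second-order accounting: since $|a_{nj}|\le1$ always, while $\mathrm{Re}(a_{nj})\ge1-\tfrac12\sigma_{nj}^2$ and $\mathrm{Im}(a_{nj})=\EE[\sin Y_{nj}\mid\mathcal F_{k-1}^n]$ is only of third order in the increments, one gets $\log|a_{nj}|^2\approx-\sigma_{nj}^2$, whence $\sum_j\log|a_{nj}|\to-\tfrac12 u'\varGamma u$ and $|\prod_j a_{nj}^{-1}|$ stays bounded uniformly in $n$. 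Making the logarithm expansion and the remainder estimate simultaneously uniform in $k$ and tight in $n$ is the delicate part that B1 and B2 are designed to supply; the fact that $\varGamma$ is \emph{deterministic} is what forces the limit to be a genuine Gaussian rather than a mixture of normals.
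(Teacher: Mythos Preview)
The paper does not prove this theorem; it is stated in the Appendix as a known result quoted from Duflo~\cite{DM}, without any argument. So there is no ``paper's own proof'' to compare your attempt against.

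That said, your sketch follows the standard characteristic-function route (Cram\'er--Wold reduction, truncation, exponential-martingale telescoping) and is essentially the argument one finds in the cited reference or in Hall--Heyde. The outline is sound and the delicate point you isolate---controlling $\prod_j|a_{nj}|^{-1}$ via the second-order expansion rather than a crude lower bound---is indeed the crux. One small remark: your displayed identity
\[
\EE[e^{iS_n}]=c+\EE\Bigl[e^{iS_n}\,{\textstyle\prod_j} a_{nj}^{-1}\bigl({\textstyle\prod_j} a_{nj}-c\bigr)\Bigr]
\]
uses $\EE[e^{iS_n}\prod_j a_{nj}^{-1}]=1$ to absorb the constant, which is fine, but to conclude that the error term vanishes you need not just boundedness in probability of $\prod_j a_{nj}^{-1}$ but uniform integrability of the product $e^{iS_n}\prod_j a_{nj}^{-1}(\prod_j a_{nj}-c)$; since $|e^{iS_n}|=1$ and $|\prod_j a_{nj}|\le 1$, this reduces to showing $\prod_j|a_{nj}|^{-1}$ is bounded \emph{in $L^1$} (or at least uniformly integrable), which your logarithmic estimate does deliver once you note that $\sum_j\sigma_{nj}^2$ is bounded in probability by B1 and can be made bounded a.s.\ after a further stopping argument. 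This is routine but worth flagging explicitly.
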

\paragraph{Remark.}
The following assumption  known as the Lyapounov condition, implies the Lindberg's condition {\it B2.},
\begin{itemize}
\item [{\it B3.}] There exists a real number $a>1$, sucht that
$$ \sum_{k=1}^{k_{n}} \mathbb{E} \left[| M_{k}^{n}-M_{k-1}^{n}|^{2a} | \mathcal{F}_{k-1}^{n} \right] \overset{\mathbb{P}}{\underset{n\rightarrow\infty}{\longrightarrow}}  0.$$
\end{itemize}
Moreover, we give a double indexed version of the Toeplitz lemma. For a proof of this result see 
Lemma 4.1 in \cite{BenalayaHajjiKebaier}
\begin{lemma}
\label{toeplitz}
Let $(a_{i})_{  1\leq i \leq k_{n} } $ a sequence of real positive numbers, where $k_{\varepsilon}\uparrow \infty$   as $\varepsilon$ tends to $0$, and $(x_{i}^{\varepsilon})_{i\geq 1,0 < \varepsilon \leq 1}$ a  double indexed sequence such that
\begin{itemize}
 \item[(i)] $\lim\limits_{\varepsilon \to 0} \sum_{ 1\leq i \leq k_{\varepsilon}} a_{i}=\infty$
 \item[(ii)]  $\lim\limits_{\substack{i \to +\infty \\ \varepsilon \to 0}} x_{i}^{\varepsilon}= \lim\limits_{i \to +\infty}(\lim\limits_{\varepsilon \to 0} x_{i}^{\varepsilon}) = \lim\limits_{\varepsilon \to 0}(\lim\limits_{i \to +\infty} x_{i}^{\varepsilon}) = x$
\end{itemize}
Then
$$\lim\limits_{\varepsilon \rightarrow 0} \frac{\sum_{i=1}^{k_{\varepsilon}} a_{i} x_{i}^{\varepsilon}}{\sum_{i=1}^{k_{\varepsilon}} a_{i}} =x.$$
\end{lemma}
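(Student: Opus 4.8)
The plan is to adapt the classical Toeplitz lemma, the new feature being the double index $(i,\varepsilon)$, which I would handle by a head/tail decomposition. First I would reduce to the case $x=0$. Writing
\[
\frac{\sum_{i=1}^{k_\varepsilon} a_i x_i^\varepsilon}{\sum_{i=1}^{k_\varepsilon} a_i}-x=\frac{\sum_{i=1}^{k_\varepsilon} a_i (x_i^\varepsilon-x)}{\sum_{i=1}^{k_\varepsilon} a_i},
\]
and noticing that $y_i^\varepsilon:=x_i^\varepsilon-x$ still satisfies hypothesis (ii) with common value $0$, it suffices to prove that the weighted average of the $y_i^\varepsilon$ tends to $0$. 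Fix $\eta>0$. The \emph{double} limit in (ii) supplies an index $I$ and a threshold $\delta_1>0$ such that $|y_i^\varepsilon|<\eta$ whenever $i\geq I$ and $0<\varepsilon\leq\delta_1$. I would then split the weighted sum at $I$ into a tail $\sum_{i=I}^{k_\varepsilon} a_i y_i^\varepsilon$ and a head $\sum_{i=1}^{I-1} a_i y_i^\varepsilon$.

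For the tail, using that the $a_i$ are positive and that $|y_i^\varepsilon|<\eta$ on the whole range $i\geq I$, for every $\varepsilon\leq\delta_1$ small enough that $k_\varepsilon\geq I$ (possible since $k_\varepsilon\uparrow\infty$ as $\varepsilon\to0$) one obtains
\[
\left|\frac{\sum_{i=I}^{k_\varepsilon} a_i y_i^\varepsilon}{\sum_{i=1}^{k_\varepsilon} a_i}\right|\leq \eta\,\frac{\sum_{i=I}^{k_\varepsilon} a_i}{\sum_{i=1}^{k_\varepsilon} a_i}\leq\eta.
\]
This is precisely where the uniform nature of hypothesis (ii) is essential: one single cutoff $I$ controls every tail term by the same $\eta$, regardless of how large $i$ becomes, and this is guaranteed by the double limit rather than by either iterated limit alone.

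For the head, which is a fixed finite sum of length $I-1$, I would invoke the inner limit hidden inside (ii): the existence of the iterated limit $\lim_{i\to+\infty}\bigl(\lim_{\varepsilon\to0} y_i^\varepsilon\bigr)$ presupposes that $\lim_{\varepsilon\to0} y_i^\varepsilon$ exists and is finite for each fixed $i$. Hence, as $\varepsilon\to0$, the numerator $\sum_{i=1}^{I-1} a_i y_i^\varepsilon$ converges to the finite quantity $\sum_{i=1}^{I-1} a_i\lim_{\varepsilon\to0} y_i^\varepsilon$ and in particular stays bounded, while by hypothesis (i) the denominator $\sum_{i=1}^{k_\varepsilon} a_i$ diverges. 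Consequently the head contribution vanishes as $\varepsilon\to0$, so there is $\delta_2>0$ with head contribution $<\eta$ in absolute value for $\varepsilon\leq\delta_2$. Combining the two estimates gives, for every $\varepsilon\leq\min(\delta_1,\delta_2)$ with $k_\varepsilon\geq I$, a bound of $2\eta$ on the full weighted average; letting $\eta\downarrow0$ closes the argument.

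The only genuinely delicate point, and the one I expect to be the main obstacle, is keeping the head and tail mechanisms simultaneously honest. The tail needs a fixed cutoff $I$ beyond which all $y_i^\varepsilon$ are uniformly small for small $\varepsilon$, which is delivered by the double limit; the head, now of fixed finite length, is annihilated purely by the divergence in (i). The threefold equality in (ii) is exactly the hypothesis that makes these two controls compatible, which is why all three limits must be assumed equal rather than one of them merely existing; with only the iterated limits one could not rule out a tail in which $|y_i^\varepsilon|$ fails to be uniformly small along a diagonal $i=i(\varepsilon)\to\infty$.
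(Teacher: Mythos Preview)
The paper does not give its own proof of this lemma; it simply refers the reader to Lemma~4.1 of \cite{BenalayaHajjiKebaier}. Your head/tail decomposition is exactly the classical Toeplitz argument adapted to the double index, and it is correct.

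The one point worth flagging is your head estimate: it rests on $\lim_{\varepsilon\to 0} x_i^\varepsilon$ existing and being finite for \emph{every} fixed $i$, not merely for all sufficiently large $i$. You rightly extract this from the iterated-limit clause of hypothesis~(ii), and it is essential --- without it the lemma would actually fail (take $a_i\equiv 1$, $k_\varepsilon=\lfloor 1/\varepsilon\rfloor$, $x_1^\varepsilon=1/\varepsilon$, and $x_i^\varepsilon=0$ for $i\geq 2$; then all three limits in (ii) are $0$ under the weaker reading, yet the weighted average tends to $1$). So your interpretation of the hypothesis is the intended one, and with it the argument goes through as written.
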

   \def\cprime{$'$}

\end{document}